\tikzset{tab/.style={matrix of math nodes,column sep=-.35, row sep=-.35,text height=7pt,text width=7pt,align=center,inner sep=2,font=\footnotesize}}
\newcommand{\g}{\mathfrak{g}}
\newcommand{\gl}{\mathfrak{gl}}
\newcommand{\so}{\mathfrak{so}}
\newcommand{\inner}[2]{\left\langle #1, #2 \right\rangle}
\newcommand{\iso}{\cong}
\newcommand{\abs}[1]{\lvert #1 \rvert}
\newcommand{\field}{\mathbf{k}} 
\newcommand{\imi}{\mathbf{i}} 
\newcommand{\qbigwedge}{\bigwedge\nolimits_q}
\DeclareMathOperator{\Cl}{Cl} 
\newcommand{\mcA}{\mathcal{A}}
\newcommand{\mcB}{\mathcal{B}}
\newcommand{\mcC}{\mathcal{C}}
\newcommand{\zz}{\mathbf{z}}
\newcommand{\ZZ}{\mathbb{Z}}
\newcommand{\RR}{\mathbb{R}}
\newcommand{\CC}{\mathbb{C}}
\definecolor{darkred}{rgb}{0.7,0,0} 
\newcommand{\defn}[1]{{\color{darkred}\emph{#1}}} 
\definecolor{UQgold}{RGB}{196, 158, 54} 
\definecolor{UQpurple}{RGB}{73, 7, 94} 
\definecolor{UMNgold}{RGB}{255,200,46} 
\definecolor{UMNmaroon}{RGB}{106,0,50} 
\lstdefinelanguage{Sage}[]{Python}
{morekeywords={False,sage,True},sensitive=true}
\definecolor{dblackcolor}{rgb}{0.0,0.0,0.0}
\definecolor{dbluecolor}{rgb}{0.01,0.02,0.7}
\definecolor{dgreencolor}{rgb}{0.2,0.4,0.0}
\definecolor{dgraycolor}{rgb}{0.30,0.3,0.30}
\theoremstyle{plain}
\newtheorem{thm}{Theorem}[section]
\newtheorem{lemma}[thm]{Lemma}
\newtheorem{lem}[thm]{Lemma} 
\newtheorem{prop}[thm]{Proposition}
\newtheorem{cor}[thm]{Corollary}
\theoremstyle{definition}
\newtheorem{dfn}[thm]{Definition}
\newtheorem{ex}[thm]{Example}
\newtheorem{rmk}[thm]{Remark} 
\numberwithin{equation}{section}
\newcommand{\phdg}{\phantom{\dagger}}
\newcommand{\dg}{*}
\newcommand{\ppdcomm}[1][a]{[\psi_{#1}^\dagger, \psi_{#1}]}
\newcommand{\crefrangeconjunction}{--} 
\newcommand{\inv}[1]{#1^{-1}}
\DeclareMathOperator{\End}{End}
\DeclareMathOperator{\Hom}{Hom}
\newcommand{\Cln}{\Cl\left(\field^n \oplus (\field^n)^*\right)}
\newcommand{\lieg}{\mathfrak{g}}
\newcommand{\qinv}{q^{-1}}
\newcommand{\Uqg}{U_q(\lieg)}
\newcommand{\Uqdn}{U_q(\mathfrak{so}_{2n})}
\newcommand{\Uqglm}{U_q(\mathfrak{gl}_m)}
\newcommand{\Uqgln}{U_q(\mathfrak{gl}_n)}
\newcommand{\Uqodn}{U_q(\mathfrak{o}_{2n})}
\newcommand{\Uqprime}{U_q'(\mathfrak{so}_m)}
\begin{document}
\title[Quantum Clifford structure and representations]{On the structure and representation theory of $q$-deformed Clifford algebras}

\author[W.~Aboumrad]{Willie Aboumrad}
\address[W.~Aboumrad]{The Institute for Computational and Mathematical Engineering (ICME) at Stanford University}
\email{willieab@stanford.edu}
\urladdr{https://web.stanford.edu/~willieab}

\author[T.~Scrimshaw]{Travis Scrimshaw}
\address[T.~Scrimshaw]{Faculty of Science, Hokkaido University, 5 Ch\=ome Kita 8 J\=onishi, Kita Ward, Sapporo, Hokkaid\=o 060-0808}
\email{tcscrims@gmail.com}
\urladdr{https://tscrim.github.io/}

\keywords{quantum Clifford, basis, semisimple}
\subjclass[2010]{16G10, 16D60, 81R05, 15A66}

\thanks{T.S.~was partially supported by Grant-in-Aid for JSPS Fellows 21F51028.}

\begin{abstract}
We provide a generalized definition for the quantized Clifford algebra introduced by Hayashi using another parameter $k$ that we call the twist.
For a field of characteristic not equal to $2$, we provide a basis for our quantized Clifford algebra, show that it can be decomposed into rank $1$ components, and compute its center to show it is a classical Clifford algebra over the group algebra of a product of cyclic groups of order $2k$.
In addition, we characterize the semisimplicity of our quantum Clifford algebra in terms of the semisimplicity of a cyclic group of order $2k$ and give a complete set of irreducible representations.
We construct morphisms from quantum groups and explain various relationships between the classical and quantum Clifford algebras.
By changing our generators, we provide a further generalization to allow $k$ to be a half integer, where we recover certain quantum Clifford algebras introduced by Fadeev, Reshetikhin, and Takhtajan as a special case.
\end{abstract}

\maketitle



\section{Introduction}
\label{sec:introduction}

Clifford algebras (over $\CC$) are a certain class of superalgebras defined by a universal property involving a symmetric bilinear form (equivalently a quadratic form) that appear naturally in the study of particle systems to describe fermionic creation and annihilation operators.
We build a Clifford algebra $\Cl(V \oplus V^{\dg})$ by taking an $n$ dimensional vector space $V$ and its algebraic dual $V^{\dg}$ to be maximal isotropic subspaces under the natural symmetric bilinear form defined by $\psi_i^{\dg}(\psi_j) = \delta_{ij}$ for dual bases $\{\psi_i\}_{i=1}^n$ and $\{ \psi_i^{\dg}\}_{i=1}^n$ of $V$ and $V^{\dg}$ respectively.
We can also form the (s)pin group, which is the universal cover of the (special) orthogonal group and is used to describe the symmetries of fermions, from the Clifford algebra.
By taking $n \to \infty$ in a suitable way, we arrive at the (fermionic) Fock space that is used to define representations of the infinite rank general linear Lie algebra $\gl_{\infty}$ and the affine general linear Lie algebra $\widehat{\gl}_n$ through the boson-fermion correspondence.
More precisely, this comes from the spinor representation of the Clifford algebra $\Cl(V \oplus V^{\dg})$ that is constructed on the exterior algebra $\bigwedge V$, which is the unique (up to isomorphism) simple module of a Clifford algebra.
Taking $V$ to be the natural $\gl_n$ representation, the exterior power $\bigwedge^k V$ is the irreducible fundamental representation $V(\Lambda_k)$ of $\gl_n$.
Given the relation with the spin group, we also obtain representations of the special orthogonal Lie algebra $\so_n$, with a slight dependency on the parity of $n$.
See, \textit{e.g.},~\cite{FH91} for more details.

There is a $q$-deformation of the representation theory of a Lie algebra $\g$ through its corresponding (Drinfel'd--Jimbo) quantum group $U_q(\g)$.
We refer the reader to standard textbooks such as~\cite{chari_pressley_1994,Jantzen96,Kassel95} for more information.
There are analogous $q$-deformations of the exterior algebra that have been used to construct $U_q(\g)$-representations, such as~\cite{DF94,Hayashi90,JMMO91,JMO00,KMS95,Kwon14,LT96}.
One interesting development is the relationship between the global crystal basis, in the sense of Kashiwara~\cite{Kashiwara90,Kashiwara91}, and Hecke algebras at roots of unity first conjectured by Lascoux, Leclerc, and Thibon~\cite{LLT96} and later proven by Ariki~\cite{Ariki96}.
Some additional recent developments can be found in~\cite{Gerber19}.

In this paper, we focus on the construction of a $q$-deformation of the Clifford algebra given by Hayashi~\cite{Hayashi90} that we call the quantum Clifford algebra.
In~\cite{Kwon14}, Kwon uses the quantum Clifford algebra that is attributed to Hayashi, but the presentation is slightly different.
Indeed, the dimensions of the two algebras are different, although this is done to account for the difference in the quantum groups used in their constructions.
In this paper, we generalize these constructions to have a second parameter $k$ that we call the twist; the case $k = 1$ was used in~\cite{Kwon14} while the case $k = 2$ in~\cite{Hayashi90}.
Furthermore, since the Clifford algebra construction works over an arbitrary field $\field$ of chararacteristic not equal to $2$, we work at this level of generality.\footnote{Modifications can be made for the characteristic $2$ case, but the situation is drastically different.}
We denote the rank $n$ and twist $k$ quantum Clifford algebra as $\Cl_q(n, k)$, where $q \in \field \setminus \{0\}$.

In order to work with these algebras, we first construct a basis (\Cref{thm:basis_general}) analogous to the usual Clifford algebra basis through a series of reduction rules to a standard form.
This allows us to show the dimension of $\Cl_q(n,k)$ is equal to $(8k)^n$ (\Cref{clqnk dim}).
This recovers the remark at the end of~\cite{Hayashi90} due to M.~Takeuchi about the dimension of $\Cl_q(n, 2)$ over $\CC$.
In \Cref{sec:Clifford_relationship}, we expand this remark to our level of generality provided we have all $2k$th roots of unity in $\field$.
Using our basis, we are then able to compute the center of $\Cl_q(n,k)$ as the group algebra $\field [\ZZ_{2k}^n]$ of a product of cyclic groups of order $2k$ (\Cref{clqnk center}).
We also extend the morphisms from a quantum group (\Cref{twisted qgp into clqnk}) given in~\cite{Hayashi90}.

Since the quantum Clifford algebra is a superalgebra (that is, it has a $\ZZ_2$-grading), we can decompose it into a signed tensor product of rank $1$ Clifford algebras, so
\[
\Cl_q(n, k) \iso \Cl_q(1, k) \, \widehat{\otimes} \, \Cl_q(1,k) \, \widehat{\otimes} \,\cdots \, \widehat{\otimes} \, \Cl_q(1,k),
\]
analogous to the classical case.
However, it will be useful to decompose this as a usual tensor product of algebras (\Cref{clqn clqnm embedding}),
\[
\Cl_q(n, k) \iso \Cl_q(1, k) \otimes \Cl_q(1,k) \otimes \cdots \otimes \Cl_q(1,k),
\]
which is also analogous to the classical case.
Following the classical case, we construct this isomorphism by introducing a quantum analog of the volume element.
The volume element of the usual Clifford algebra is an important element as it can be used to prove Bott periodicity, as explained in \cite[Prop.~3.5, Thm.~3.7]{michelsohn_lawson}.
It also has important representation-theoretic implications as it can be used to classify irreducible representations of any Clifford algebra; see~\cite[Prop.5.9,~5.10]{michelsohn_lawson}.
In spin geometry, the volume element gives any Dirac bundle a $\ZZ_2$-grading \cite[Ch.~6]{michelsohn_lawson}, and in relativistic quantum mechanics, it is used to define chirality for spinors~\cite{trindade_floquet_vianna_2020}.

While $\Cl_q(n, k)$ behaves much like its classical counterpart, there are a number of important distinctions coming from the existence of a nontrivial center.
The first is that $\Cl_q(n,k)$ is no longer a simple algebra, but its semisimplicity is tied directly to the semisimplicity of its center (\Cref{generalized semisimplicity}).
This is a reflection of the fact that $\Cl_q(n,k)$ could be seen as a classical Clifford algebra over its center.
This also allows us to construct all irreducible representations, which we can easily characterize when we have all $2k$th roots of unity in $\field$ (\Cref{clqnk repn} and \Cref{clqnk is semisimple}).
Although our given proof uses the explicit structure from our basis and the (unsigned) tensor product decomposition.
Moreover, our construction of the irreducible representations uses the braided exterior algebra of~\cite{berenstein}, which is equivalent to the natural generalization of Hayashi's construction~\cite{Hayashi90} on the usual exterior algebra but is better behaved when taking tensor products of quantum group representations.

Next, by observing the relations, we note that almost all relations can be defined when $k \in \frac{1}{2}\ZZ_{>0}$.
We extend to this case by slightly changing our generating set in \Cref{sec:root}, which leads to analogous results in the half twist cases.
In particular, we show that the quantum Clifford algebra can recover the usual Clifford algebra when $k = \frac{1}{2}$ (\Cref{thm:classical_iso}).
Furthermore, it becomes easy to see that this is the quantum Clifford algebra construction given by Ding and Frenkel~\cite{DF94} based upon the FRT construction~\cite{FRT89}.
Thus, we show that our family of quantum Clifford algebras encompasses the seemingly different constructions of~\cite{DF94} and~\cite{Hayashi90}.

Let us discuss an application of our results to quantized skew Howe duality, which is a relationship between commutating actions of a quantum group and (a coideal subalgebra of) a possibly different quantum group on a braided exterior algebra~\cite{CKM14,LZZ11,QS19,ST19}.
The skew $\Uqgln \otimes \Uqglm$-duality~\cite[Theorem~3.19]{willie_a} (resp.\ skew $\Uqodn \otimes \Uqprime$-duality \cite[Theorem~3.19]{willie_bd}) relies on embeddings of $\Uqgln$ and $\Uqglm$ (resp. $\Uqdn$ and $\Uqprime$) into $\Cl_q(nm, 1)$.
These embeddings leverage the structure theorem \ref{clqn clqnm embedding} in order to define quantum group actions on certain tensor products of modules.

This paper is organized as follows.
In \Cref{cln props}, we recall some facts and properties of the classical Clifford algebra.
In \Cref{clqnk props}, we prove a number of structural properties of the quantum Clifford algebra.
In \Cref{clqnk rep theory}, we discuss the representation theory of the quantum Clifford algebra.
In \Cref{sec:root}, we discuss the half twist cases of the quantum Clifford algebra.
In \Cref{sec:Clifford_relationship}, we generalize the remark of Takeuchi in~\cite{Hayashi90}.

\subsection*{Acknowledgements}

The authors thank Daniel Bump and Jae-Hoon Kwon for useful discussions.
The first author also thanks Daniel Bump for his patient guidance and caring support throughout the years.
The second author thanks Stanford University for its hospitality during his visit in May, 2022.

This work benefited from computations using \textsc{SageMath}~\cite{sagemath}.
Some of the results in this work were discovered by the first author as part of his dissertation research.
This work was partly supported by Osaka City University Advanced Mathematical Institute (MEXT Joint Usage/Research Center on Mathematics and Theoretical Physics JPMXP0619217849).

\section{Background: Classical Clifford algebras}
\label{cln props}

In this section we define the classical Clifford algebra and recall some of its basic properties.

Every algebra considered here is unital and associative unless otherwise stated.
We fix a field $\field$ of characteristic different from $2$ and compute all tensor products over $\field$.
We denote by $\zeta_r$ a primitive $r$-th root of unity in $\field$ (assuming it exists); so $\zeta_r = e^{2\pi \imi / r}$ with $\imi = \sqrt{-1}$ in $\CC$.
We use $\ZZ_r = \ZZ / r \ZZ$ to denote the (cyclic) group of integers modulo $r$ under $+$, and let $e_j$ denote the $j$th standard basis vector in $\ZZ^n$.
For any ring $R$, we let $R^{\times}$ denote its group of units.
We use $\{A, B\}$ (resp.~$[A, B]_q$) to denote the algebra anticommutator (resp.\ $q$-commutator) of $A$ and $B$:
\[
\{A, B\} = AB + BA,
\qquad\qquad
[A, B]_q = AB - q \, BA.
\]

\begin{dfn}[{\cite[Def.~6.1.1]{GW}}] \label[defn]{classical clifford alg defn}
	Let $W$ be a vector space over $\field$ equipped with a symmetric bilinear form $\beta$.
	A \defn{Clifford algebra} for $(W, \beta)$ is an associative $\field$-algebra $\Cl(W, \beta)$ and a linear map $\gamma\colon W \hookrightarrow \Cl(W, \beta)$ satisfying the following conditions:
	\begin{enumerate}[(i)]
		\item $\{\gamma(v), \gamma(w)\} = \beta(v, w)$ for every $v, w \in W$.
		\item $\gamma(W)$ generates $\Cl(W, \beta)$ as an algebra.
		\item (Universal Property) Given any unital associative $\field$-algebra $\mcA$ with a linear map $\varphi\colon W \to \mcA$ such that $\{\varphi(v), \varphi(w)\} = \beta(v, w)$, there exists an algebra homomorphism $\widetilde{\varphi}\colon \Cl(W, \beta) \to \mcA$ such that $\varphi = \widetilde{\varphi} \circ \gamma$.
		In other words, there is an algebra map $\widetilde{\varphi}$ making the following diagram commute.
	\begin{equation*}
	\begin{tikzcd}
		W \ar[d, swap, "\gamma"] \ar[r, "\varphi"]
		& \mcA 
		\\
		Cl(W, \beta) \ar[ur, dashed, swap, "\widetilde{\varphi}"]
	\end{tikzcd}
	\end{equation*}
	\end{enumerate}
\end{dfn}

From the universal property, Clifford algebras are unique (up to isomorphism) for the pair $(W, \beta)$, and so we call $\Cl(W, \beta)$ the Clifford algebra of $(W, \beta)$.
We can explicitly construct it as a quotient of the tensor algebra $T(W) = \bigoplus_{m=0}^\infty W^{\otimes m}$ modulo the ideal generated by elements of the form $v \otimes v - \beta(v,v) 1_{T(W)}$.
Hence, the natural $\ZZ$-grading on $T(W)$ descends to a $\ZZ_2$-grading, making $\Cl(W, \beta)$ a \defn{superalgebra}.
The \defn{exterior algebra} is when $\beta = 0$ and is denoted $\bigwedge(W) = \Cl(W, 0)$, and it is supercommutative.

In this work, we mostly consider Clifford algebras for spaces of the form $V \oplus V^*$, where $V^* := \Hom_{\field}(V, \field)$ is the usual algebraic dual of $V$.
We equip $V \oplus V^*$ with the symmetric bilinear form $\beta$ resulting from the duality pairing between $V$ and $V^*$:
\begin{equation}\label{symm bilinear form}
	\beta\big((v, f), (w, h)\big) = f(w) + h(v), \quad v, w \in V, \,\, f, h \in V^*.
\end{equation}
Hence $V$ and $V^*$ are maximal isotropic subspaces with respect to $\beta$; that is, they are Lagrangian subspaces of $V \oplus V^*$.
For simplicity, we write $\Cl(V \oplus V^*) = \Cl(V \oplus V^*, \, \beta)$.
We will work in a fixed basis $\{\psi_a\}_{a=1}^n$ for $V$, and let $\{\psi_b^{\dg}\}_{b=1}^n$ denote the dual basis in $V^*$.
Then the $\psi_a$ and $\psi_b^{\dg}$ satisfy the \defn{canonical anticommutation relations}
\begin{align}\label{cac}
	\begin{gathered}
		\psi_a \psi_b + \psi_b \psi_a = \psi_a^\dg \psi_b^\dg + \psi_b^\dg \psi_a^\dg = 0, \\
		\psi_a \psi_b^\dg + \psi_b^\dg \psi_a = \delta_{ab}.
	\end{gathered}
\end{align}

Recall that $\Cl(V \oplus V^*)$ is a central simple algebra, and therefore it has a unique irreducible representation called the \defn{spinor representation}, up to isomorphism.
See, \textit{e.g.},~\cite[Thm.~31.1]{Bump}, \cite[Ch.~IV]{Knus91}, \cite[Ch.~5]{Lam05}, or~\cite[Ch.~V]{OMeara00} for details.
The irreducible $\Cl(V \oplus V^*)$-module may be realized on the exterior algebra $\bigwedge(V)$ by imposing that $\psi_i^* \cdot 1_{\bigwedge(V)} = 0$; equivalently, this is $\Cl(V \oplus V^*) / J^*$, where $J^*$ is the left ideal generated by~$V^*$.

We conclude this section by constructing an isomorphism
\[
\Gamma_V\colon \Cl(V \oplus V^*)^{\otimes m} \to \Cl\left(V \otimes W) \oplus (V \otimes W)^*\right),
\]
with $m = \dim W$ that we will generalize to the quantum setting in the sequel.
For each $r = 1, \ldots, n$, define the \defn{volume} or \defn{chirality element} of the subalgebra $\Cl(\field^r \oplus (\field^r)^*) \subset \Cl(V \oplus V^*)$ by
\begin{align}\label{classical volume elt}
\overline{f}_r = [\psi_1, \psi_1^\dg] \cdots [\psi_r, \psi_r^\dg].
\end{align}
This name comes from rewriting \Cref{classical volume elt} as
\[
\overline{f}_r = \epsilon_1 \cdots \epsilon_{2r},
\]
where
\begin{equation}\label{classical std coords}
\epsilon_{2j-1} = \psi_j^\dg - \psi_j
	\quad\text{and}\quad 
\epsilon_{2j} = \psi_j^\dg + \psi_j
\end{equation}
(\textit{cf.}~\cite{michelsohn_lawson}) since $\epsilon_{2j-1} \epsilon_{2j} = [\psi_j, \psi_j^\dg]$.
The $\epsilon_j$ correspond to the (standard) orthonormal basis of $\field^{2n}$ with the bilinear form defined by~\eqref{symm bilinear form}, which becomes a diagonal bilinear form $\delta$ in this basis so that $\Cl(V \oplus V^*) \iso \Cl(\field^{2n}, \delta)$ (see \Cref{eq:usual_Clifford_COB} and \Cref{eq:inverse_COB} for a precise example).
Hence, $\overline{f}_r$ is the volume element in the corresponding exterior algebra $\bigwedge(\field^{2r})$.
Contrast this with $\psi_j, \psi_j^\dg$, which define an isotropic basis of $\field^n \oplus (\field^n)^\dg$ with the polarized bilinear form of~\eqref{symm bilinear form}.

In this context we use the volume element to construct a superalgebra isomorphism expressing a tensor product of Clifford algebras as another Clifford algebra.
This map is motivated by~\cite[Lemma~1.2]{wenzl_spin_centralizer}.

\begin{prop}\label[prop]{cl tensor m into clnm}
	Suppose $V$ and $W$ are $\field$-vector spaces with $\dim V = n$ and $\dim W = m$. Let $\phi_a$ denote either $\psi_a$ or $\psi_a^\dg$.
	There is a superalgebra isomorphism $\Gamma_V\colon \Cl(V \oplus V^*)^{\otimes m} \to \Cl\left((V \otimes W) \oplus (V \otimes W)^*\right)$ satisfying
	\begin{align*}
		1 \otimes \cdots \otimes \phi_a \otimes \cdots \otimes 1 
			\to \overline{f}_{(j-1)n} \phi_{a + (j-1)n},
	\end{align*}
	where on the left $\phi_a$ appears in the $j$th tensor factor of $\Cl(V \oplus V^*)^{\otimes m}$.
\end{prop}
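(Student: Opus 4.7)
The plan is to invoke the universal property of $\Cl(V \oplus V^*)^{\otimes m}$: define $\Gamma_V$ on the generators $1 \otimes \cdots \otimes \phi_a \otimes \cdots \otimes 1$ by the prescribed formula, verify that the defining relations of the tensor product algebra are preserved, and then promote surjectivity to bijectivity via a dimension count. Since $\dim \Cl(V \oplus V^*)^{\otimes m} = (2^{2n})^m = 2^{2nm} = \dim \Cl\bigl((V \otimes W) \oplus (V \otimes W)^*\bigr)$, it suffices to show $\Gamma_V$ is a well-defined surjective algebra homomorphism that respects the $\ZZ_2$-grading.

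The well-definedness rests on three elementary but essential properties of the volume elements $\overline{f}_r$ that I would isolate first as a short preliminary lemma: (i) $\overline{f}_r^2 = 1$ (from $[\psi_i, \psi_i^\dg]^2 = 1$ and the fact that the factors $[\psi_i, \psi_i^\dg]$ pairwise commute, being products of two odd generators with disjoint indices); (ii) $\overline{f}_r$ anticommutes with $\phi_c$ for $c \leq r$ and commutes with $\phi_c$ for $c > r$; and (iii) $\overline{f}_r$ and $\overline{f}_{r'}$ commute. With these in hand, I would check the relations in two cases. For two generators in the same $j$-th tensor factor, writing $r = (j-1)n$ and using $a, b \geq 1$, property (ii) gives $\overline{f}_r \phi_{a+r} = \phi_{a+r} \overline{f}_r$ and similarly for $b$, so
\[
\{\overline{f}_r \phi_{a+r},\ \overline{f}_r \phi_{b+r}\} = \overline{f}_r^{\,2}\{\phi_{a+r}, \phi_{b+r}\} = \beta(\phi_{a+r}, \phi_{b+r}),
\]
which matches the single-factor relation in $\Cl(V \oplus V^*)$ under the index shift $a \mapsto a+r$.

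For the commuting-factors condition (necessary for the usual, unsigned tensor product), take $j < j'$, set $r = (j-1)n$ and $r' = (j'-1)n$, and observe that $a+r \leq r+n \leq r'$ while $b+r' > r$. Then property (ii) produces exactly one sign in each of the two products $\overline{f}_r\phi_{a+r}\cdot\overline{f}_{r'}\phi_{b+r'}$ and $\overline{f}_{r'}\phi_{b+r'}\cdot\overline{f}_r\phi_{a+r}$: the first sign comes from moving $\overline{f}_{r'}$ past $\phi_{a+r}$, and the second from anticommuting $\phi_{b+r'}$ with $\phi_{a+r}$. Modulo these signs, using (iii) to commute the volume elements, both sides reduce to $-\overline{f}_{r'}\overline{f}_r\phi_{a+r}\phi_{b+r'}$, so the two generators commute. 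This is the main conceptual step, and it is precisely the role played by the chirality element.

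Surjectivity follows by induction on $j$: the $j = 1$ generators contribute all $\phi_a$ for $a \leq n$, whence $\overline{f}_n$ itself lies in the image; multiplying the $j = 2$ generators by $\overline{f}_n$ and using $\overline{f}_n^{\,2} = 1$ shows $\phi_{a+n}$ is in the image for $a \leq n$; iterating captures every $\phi_c$ with $c \leq nm$, which generate the codomain. Finally, since each generator $1 \otimes \cdots \otimes \phi_a \otimes \cdots \otimes 1$ is odd and its image $\overline{f}_{(j-1)n}\phi_{a+(j-1)n}$ is a product of the even element $\overline{f}_{(j-1)n}$ with an odd generator, $\Gamma_V$ preserves the $\ZZ_2$-grading, completing the claim. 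The main obstacle is confirming the cross-factor commutation in the second step; everything else is bookkeeping.
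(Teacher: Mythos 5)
Your proof is correct, and the preliminary lemma you isolate --- that $\overline{f}_r^2 = 1$, that $\overline{f}_r$ anticommutes with $\phi_c$ for $c\le r$ and commutes with $\phi_c$ for $c>r$, and that the $\overline{f}_r$ pairwise commute --- is exactly the classical counterpart of \Cref{volume elt props}, which drives the paper's proof of the quantum analog \Cref{clqn clqnm embedding}; the paper gives no separate proof of \Cref{cl tensor m into clnm} itself. Your same-factor and cross-factor relation checks mirror the quantum computations there. The one genuine divergence is the final step: you establish surjectivity by induction on the tensor factor index and then invoke the dimension count $2^{2nm}$ on both sides, whereas the paper's quantum proof states an explicit two-sided inverse $\phi_{a+(j-1)n}\mapsto \overline{f}_n\otimes\cdots\otimes \overline{f}_n\otimes\phi_a\otimes 1\otimes\cdots\otimes 1$ (with $j-1$ copies of $\overline{f}_n$) and verifies it directly on generators. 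Both are sound; the dimension count is marginally shorter in the finite-dimensional setting, while having the explicit inverse formula in hand is what the paper actually uses downstream when transporting quantum group actions through $\Gamma_q$ and its classical shadow $\Gamma_V$.
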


\begin{rmk}\label[rmk]{ordinary tensor prod super alg struct}
	As the Clifford algebra is a superalgebra, clearly $\Cl(V \oplus W, \, \beta_1 \oplus \beta_2) \iso \Cl(V, \beta_1) \, \widehat{\otimes} \, \Cl(W, \beta_2)$, with $\widehat{\otimes}$ denoting the graded (or signed) tensor product of superalgebras.
	Furthermore, there is a natural superalgebra structure on $\Cl(V \oplus V^*)^{\otimes m}$ using the ordinary tensor product, but the product structure is reflecting the fact that Clifford algebras are not supercommutative.
	In particular, the volume elements $\overline{f}_r$ introduce signs when appropriate.
\end{rmk}

\section{Quantized Clifford algebras}
\label{clqnk props}

In this section we prove structural properties of the quantum Clifford algebra.
To begin, we provide a generalized definition of the quantized Clifford algebra.
The first main result in this section is an explicit basis in \Cref{thm:basis_general} proven by giving a reduction algorithm.
Our other main result is computing the center in \Cref{clqnk center}.
This computation relies on the tensor product factorization of \Cref{clqn clqnm embedding}, the quantum analog of \Cref{cl tensor m into clnm}, and the construction a quantum volume element, which enjoys many of the same properties as its classical counterpart.


\begin{dfn}\label[defn]{clqnk defn}
Choose $q \in \field^{\times}$ and let $n, k \in \ZZ_{>0}$.
The \defn{quantum Clifford algebra} $\Cl_q(n, k)$ of \defn{rank} $n$ and \defn{twist} $k$ is the  associative $\field$-algebra generated by $\psi_a$, $\psi_a^*$, $\omega_a$, and $\omega_a^{-1}$, for $a \in \{1, \dotsc, n\}$, subject to the relations
\begin{align}\label{rels defining qcl}
\omega_a \omega_b & = \omega_b \omega_a, \nonumber
& \omega_a \omega_a^{-1} & = 1, 
\\ 
\omega_a \psi_b & = q^{\delta_{ab}} \psi_b \omega_a,
& \omega_a \psi_b^* & = q^{-\delta_{ab}} \psi_b^* \omega_a,
\nonumber
\\ 
\psi_a \psi_b & + \psi_b \psi_a = 0,
& \psi_a^* \psi_b^* & + \psi_b^* \psi_a^* = 0,
\\ 
\psi_a \psi_a^* & + q^k \psi_a^* \psi_a = \omega_a^{-k},
& \psi_a \psi_a^* & + q^{-k} \psi_a^* \psi_a = \omega_a^k,
\nonumber
\\ 
\psi_a \psi_b^* & + \psi_b^* \psi_a = 0 && \text{if } a \neq b. \nonumber
\end{align}
\end{dfn}

The parameter $n$ refers to the number of \textit{pairs} of $(\psi_j, \psi_j^\dagger)$ generators, so we have only defined a quantum analog for the \textit{even} dimensional Clifford algebra $\Cln$.
The case $k = 2$ was given by Hayashi~\cite{Hayashi90} and the $k=1$ version was utilized by Kwon~\cite{Kwon14}.

The defining relations imply that
\begin{equation}
\label{eq:psiq_products}
(q \omega_a)^k - (q \omega_a)^{-k} = (q^k - q^{-k}) \psi_a \psi_a^*,
\qquad\qquad
\omega_a^{-k} - \omega_a^k = (q^k - q^{-k}) \psi_a^* \psi_a.
\end{equation}
When $q^{2k} \neq 1$, we obtain an alternative presentation of $\Cl_q(n, k)$ using \Cref{eq:psiq_products} by replacing the $q^{\pm k}$-commutor relations for $\psi_a$ and $\psi_a^*$ with
\begin{equation}
\label{eq:psi_products}
\psi_a \psi_a^* = \frac{(q \omega_a)^k - (q \omega_a)^{-k}}{q^k - q^{-k}},
\qquad\qquad
\psi_a^* \psi_a = - \frac{\omega_a^k - \omega_a^{-k}}{q^k - q^{-k}}.
\end{equation}

Notice there is a $\ZZ^n$-grading on $\Cl_q(n, k)$ defined by
\begin{equation}
\label{eq:ZZn_grading}
\deg(\psi_a) = e_a, 
\qquad\qquad
\deg(\psi_a^*) = -e_a,
\qquad\qquad
\deg(\omega_a) = 0.
\end{equation}
This naturally yields a $\ZZ$-grading (by the linear map $e_a \mapsto 1 \in \ZZ$ for all $a$) and a $\ZZ_2$-grading (by $e_a \mapsto 1 \in \ZZ_2$), so $\Cl_q(n, k)$ is a superalgebra.

\subsection{Basis}
\label{sec:basis}

We construct a series of reduction rules by manipulating the defining relations.
We then obtain bases for $\Cl_q(n, k)$ using these reduction rules.

\begin{lem}\label[lem]{reduction rules}
	The following identities hold in $\Cl_q(n, k)$.
	\begin{align}
		\psi_a \omega_a^{2k} &= q^{-2k} \psi_a, 
		\label{eq:reduction_pww} \\
		\psi_a^* \omega_a^{2k} &= \psi_a^*,
		\label{eq:reduction_dww} \\
		\omega_a^{-k} &= (q^{2k} + 1) \omega_a^k - q^{2k} \omega_a^{3k} \label{eq:reduction_vi}.
	\end{align}
\end{lem}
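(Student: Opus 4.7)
The plan is to derive all three identities from the nilpotence facts $\psi_a^2 = (\psi_a^*)^2 = 0$ (the $a=b$ case of the anticommutation axioms, valid since $\operatorname{char}\field \ne 2$) together with the pair of defining relations $\psi_a\psi_a^* + q^{\pm k}\psi_a^*\psi_a = \omega_a^{\mp k}$. Subtracting the two sign choices yields an auxiliary identity $(q^k - q^{-k})\psi_a^*\psi_a = \omega_a^{-k} - \omega_a^k$, which will be used to trade $\psi$'s for $\omega$'s at the crucial step.

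For \Cref{eq:reduction_pww}, I would multiply each of the two defining relations on the right by $\psi_a$; the $\psi_a^*\psi_a^2$ term vanishes, producing $\psi_a\psi_a^*\psi_a = \omega_a^{\mp k}\psi_a$ for each sign choice. Equating the two expressions gives $(\omega_a^k - \omega_a^{-k})\psi_a = 0$, equivalently $(\omega_a^{2k} - 1)\psi_a = 0$, and commuting $\omega_a^{2k}$ past $\psi_a$ using $\psi_a\omega_a = q^{-1}\omega_a\psi_a$ yields $\psi_a\omega_a^{2k} = q^{-2k}\omega_a^{2k}\psi_a = q^{-2k}\psi_a$. The mirror argument for \Cref{eq:reduction_dww} left-multiplies each relation by $\psi_a^*$, uses $(\psi_a^*)^2 = 0$ to conclude $\psi_a^*(\omega_a^{-k} - \omega_a^k) = 0$, and then right-multiplies by $\omega_a^k$ to obtain $\psi_a^*\omega_a^{2k} = \psi_a^*$.

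The main obstacle is \Cref{eq:reduction_vi}, since its right-hand side lies entirely in the subalgebra generated by $\omega_a$ and so the $\psi$'s must be eliminated completely. My strategy is to start from $(\psi_a\psi_a^*)(\psi_a^*\psi_a) = 0$, which is immediate from $(\psi_a^*)^2 = 0$, and substitute $\psi_a\psi_a^* = \omega_a^{\mp k} - q^{\pm k}\psi_a^*\psi_a$ using each sign in turn. This yields the pair $\omega_a^{\mp k}\psi_a^*\psi_a = q^{\pm k}(\psi_a^*\psi_a)^2$, and eliminating $(\psi_a^*\psi_a)^2$ between them leaves $(q^{-k}\omega_a^{-k} - q^k\omega_a^k)\psi_a^*\psi_a = 0$. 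Scaling by the scalar $q^k - q^{-k}$ and substituting the auxiliary identity collapses the $\psi$'s entirely, giving the polynomial identity
\[
(q^{-k}\omega_a^{-k} - q^k\omega_a^k)(\omega_a^{-k} - \omega_a^k) = 0.
\]
Expanding this and multiplying through by $q^k\omega_a^k$ rearranges it into \Cref{eq:reduction_vi}.

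A subtle point is that scaling by $q^k - q^{-k}$ is vacuous in the degenerate case $q^{2k} = 1$, so this case needs a separate check. There the auxiliary identity forces $\omega_a^{-k} - \omega_a^k = 0$ directly, giving $\omega_a^{2k} = 1$ and hence $\omega_a^{3k} = \omega_a^k$; the right-hand side of \Cref{eq:reduction_vi} then simplifies to $(q^{2k}+1)\omega_a^k - q^{2k}\omega_a^{3k} = 2\omega_a^k - \omega_a^k = \omega_a^k = \omega_a^{-k}$, confirming the identity in this degenerate regime.
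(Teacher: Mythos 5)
Your proposal is correct. The arguments for \cref{eq:reduction_pww} and \cref{eq:reduction_dww} are essentially the same as the paper's: both kill a $\psi_a^2$ or $(\psi_a^*)^2$ term coming from the nilpotence and play the two $q^{\pm k}$-commutator relations off each other; you do it by multiplying both defining relations by $\psi_a$ (resp.\ left-multiplying by $\psi_a^*$) and equating, while the paper substitutes one relation into the other, a superficial difference. Your route to \cref{eq:reduction_vi}, however, is genuinely different: you start from $(\psi_a\psi_a^*)(\psi_a^*\psi_a)=0$, use both sign choices to eliminate $(\psi_a^*\psi_a)^2$ and obtain $(q^{-k}\omega_a^{-k}-q^k\omega_a^k)\psi_a^*\psi_a=0$, then multiply by the scalar $q^k-q^{-k}$ and substitute $(q^k-q^{-k})\psi_a^*\psi_a=\omega_a^{-k}-\omega_a^k$ to land in $\field[\omega_a]$. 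The paper instead feeds the freshly proved \cref{eq:reduction_pww} (in the form $\psi_a = q^{2k}\psi_a\omega_a^{2k}$) into the identity $\omega_a^{-k}-\omega_a^k = (q^k-q^{-k})\psi_a^*\psi_a$ and immediately reads off $\omega_a^{-k}-\omega_a^k = q^{2k}(\omega_a^{-k}-\omega_a^k)\omega_a^{2k} = q^{2k}\omega_a^k - q^{2k}\omega_a^{3k}$. The trade-off is clear: your method requires the separate verification you correctly supply when $q^{2k}=1$ (since multiplying by $q^k-q^{-k}$ destroys information there), whereas the paper's chain of substitutions is an unconditional sequence of equalities valid uniformly in $q$. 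Both are valid; the paper's is slightly tighter and also has the pedagogical advantage of showcasing \cref{eq:reduction_pww} as a working tool.
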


\begin{proof}
The defining relations imply $\psi_a^2 = (\psi_a^*)^2 = 0$. In addition, they imply
\[
\psi_a \omega_a^{-k} = \psi_a( \psi_a \psi_a^* + q^k \psi_a^* \psi_a ) = q^k \psi_a \psi_a^* \psi_a
= q^k \psi_a (q^k \omega^k - q^k \psi_a \psi_a^*)
= q^{2k} \psi_a \omega_a^k.
\]
Multiplying both sides on the right by $q^{-2k} \omega^k$ yields \Cref{eq:reduction_pww}. Similarly, we calculate that
\[
\psi_a^* \omega_a^k = \psi_a^* ( \psi_a \psi_a^* + q^{-k} \psi_a^* \psi_a ) = \psi_a^* \psi_a \psi_a^*
= \psi_a^* (\omega_a^{-k} - q^k \psi_a^* \psi_a)
= \psi_a^* \omega_a^{-k}.
\]
This implies \Cref{eq:reduction_dww}. Applying \Cref{eq:psiq_products,eq:reduction_pww} yields
\begin{align*}
\omega_a^{-k} - \omega_a^k & = (q^k - q^{-k}) \psi_a^* \psi_a 
= q^{2k} (q^k - q^{-k}) \psi_a^* \psi_a \omega_a^{2k} 
= q^{2k} (\omega_a^{-k} - \omega_a^k) \omega_a^{2k}
\\ &= q^{2k} \omega_a^k - q^{2k}\omega_a^{3k}.
\end{align*}
Solving for $\omega_a^{-k}$ yields \Cref{eq:reduction_vi}.
\end{proof}

We can use \Cref{eq:reduction_vi} to express $\omega_a^{-1}$ as a polynomial in $\field [\omega_a]$, since it is equivalent to
\begin{equation}\label{eq:reduction_v4}
	\omega_a^{4k} - (1 + q^{-2k}) \omega_a^{2k} + q^{-2k} = 0.
\end{equation}
Thus, the quantum Clifford algebra is generated by the $3n$ generators $\psi_a, \psi_a^*, \omega_a$, for $a = 1, \ldots, n$.
Notice that if $q^{2k} = 1$, \Cref{eq:reduction_v4} factors as
\begin{equation}
	\label{eq:omega_factoring_unity}
	(\omega_a^{2k} - 1)^2 = 0.
\end{equation}

\begin{lem}\label[lem]{generic reduction relations}
	The following relations hold in $\Cl_q(n, k)$:
\begin{align*}
\psi_a^* \psi_a &= q^k \omega_a^k - q^k \psi_a \psi_a^*, &
\psi_a \psi_a^* \psi_a &= q^k \psi_a \omega_a^k, \\ 
\psi_a \psi_a^* \psi_a \psi_a^* &= \psi_a \psi_a^* \omega_a^k, &
\psi_a^* \psi_a \psi_a^* &= \psi_a^* \omega_a^k, \\ 
\omega_a^{2k} &= (1  - q^{-2k}) \psi_a \psi_a^* \omega_a^k + q^{-2k}. 
\end{align*}
\end{lem}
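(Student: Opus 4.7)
The plan is to derive each identity in order, using only the defining relations of $\Cl_q(n,k)$ together with the easy consequences $\psi_a^2 = (\psi_a^*)^2 = 0$ (already observed in the proof of \Cref{reduction rules}) and the commutation formulas $\psi_a \omega_a^k = q^{-k} \omega_a^k \psi_a$ and $\psi_a^* \omega_a^k = q^k \omega_a^k \psi_a^*$ coming from the defining relations.

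First I would obtain the opening identity $\psi_a^* \psi_a = q^k \omega_a^k - q^k \psi_a \psi_a^*$ by multiplying the second of the two defining $q$-commutator relations, namely $\psi_a \psi_a^* + q^{-k} \psi_a^* \psi_a = \omega_a^k$, by $q^k$ and solving for $\psi_a^* \psi_a$. This reformulation is the engine of the rest. Left-multiplying by $\psi_a$ and using $\psi_a^2=0$ kills the second term and immediately yields $\psi_a \psi_a^* \psi_a = q^k \psi_a \omega_a^k$. Right-multiplying the \emph{same} reformulation by $\psi_a^*$ and using $(\psi_a^*)^2 = 0$ along with $\omega_a^k \psi_a^* = q^{-k} \psi_a^* \omega_a^k$ gives $\psi_a^* \psi_a \psi_a^* = \psi_a^* \omega_a^k$.

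For the fourth of the five identities (the length-four word $\psi_a \psi_a^* \psi_a \psi_a^*$) I would right-multiply the already established $\psi_a \psi_a^* \psi_a = q^k \psi_a \omega_a^k$ by $\psi_a^*$ and then commute $\omega_a^k$ past $\psi_a^*$ using $\omega_a^k \psi_a^* = q^{-k} \psi_a^* \omega_a^k$, so the two factors of $q^{\pm k}$ cancel and produce $\psi_a \psi_a^* \omega_a^k$ on the nose.

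The last identity requires one additional step. Starting from the first defining relation $\omega_a^{-k} = \psi_a \psi_a^* + q^k \psi_a^* \psi_a$, I would right-multiply by $\omega_a^k$ to obtain
\[
1 = \psi_a \psi_a^* \omega_a^k + q^k \psi_a^* \psi_a \omega_a^k,
\]
then substitute the opening identity (multiplied on the right by $\omega_a^k$) to replace $\psi_a^* \psi_a \omega_a^k$ by $q^k \omega_a^{2k} - q^k \psi_a \psi_a^* \omega_a^k$. Collecting terms gives $1 = (1 - q^{2k}) \psi_a \psi_a^* \omega_a^k + q^{2k} \omega_a^{2k}$; dividing through by $q^{2k}$ and rearranging yields the desired formula for $\omega_a^{2k}$. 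There is no real obstacle here, the only thing to watch is that the manipulation for the last relation does not divide by $q^{k}-q^{-k}$ (so it remains valid even when $q^{2k}=1$), which is why I avoid invoking the alternative presentation \Cref{eq:psi_products}.
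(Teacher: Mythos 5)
Your proof is correct and follows essentially the same route as the paper: it works directly from the two defining $q$-commutator relations, uses $\psi_a^2 = (\psi_a^*)^2 = 0$ and the conjugation rule $\omega_a^k \psi_a^\dagger = q^{-k}\psi_a^\dagger\omega_a^k$, and avoids dividing by $q^k - q^{-k}$, so it works even at roots of unity. The only difference is cosmetic: the paper proves the last identity by expanding $\omega_a^{2k} = \omega_a^k\cdot\omega_a^k$ with the second $q$-commutator relation and substituting for $\psi_a^*\psi_a$ via the first, while you instead multiply the first relation by $\omega_a^k$ on the right and substitute using your reformulated second relation; you also spell out the first four identities that the paper dismisses as immediate.
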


\begin{proof}
We obtain the last relation as follows:
\[
\omega_a^{2k} = \psi_a \psi_a^* \omega_a^k + q^{-k} \psi_a^* \psi_a \omega_a^k
= \psi_a \psi_a^* \omega_a^k + q^{-k} (q^{-k} \omega_a^{-k} - q^{-k} \psi_a \psi_a^*) \omega_a^k
= (1  - q^{-2k}) \psi_a \psi_a^* \omega_a^k + q^{-2k}.
\]
The remaining identities follow immediately from the defining relations \eqref{rels defining qcl}.
\end{proof}

We construct a basis of $\Cl_q(n, k)$ by considering a normal ordering of the generators analogous to the standard basis for the classical Clifford algebra.

\begin{thm}
\label{thm:basis_general}
Consider the set $\mcB$ of monomials
\[
	\prod_{a=1}^n \psi_a^{p_a} (\psi_a^*)^{d_a} \omega_a^{v_a},
\]
where $p_a, d_a \in \{0, 1\}$ and $v_a \in \{0, 1, \dotsc, 2k-1\}$.
Then $\mcB$ is a basis of $\Cl_q(n, k)$.
\end{thm}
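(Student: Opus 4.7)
My plan is to prove the theorem in two stages: first that $\mcB$ spans $\Cl_q(n,k)$, and then that $\mcB$ is linearly independent. Together these give the claimed basis.

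For spanning, I would develop a reduction-to-normal-form algorithm. Starting from an arbitrary product of generators, I would first use \Cref{eq:reduction_v4} to rewrite every $\omega_a^{-1}$ as a polynomial in $\omega_a$, eliminating the inverses. Then, using $\omega_a \psi_b = q^{\delta_{ab}}\psi_b \omega_a$ and $\omega_a \psi_b^* = q^{-\delta_{ab}}\psi_b^* \omega_a$, I would push every $\omega$-factor to the right of its index block and collect $\omega$-powers. Using the three anticommutation relations for distinct indices from \Cref{rels defining qcl}, I would sort the $\psi/\psi^*$-factors by index $a$. Within each $a$-block, $\psi_a^2 = (\psi_a^*)^2 = 0$ kills repeated letters, and the identity $\psi_a^*\psi_a = q^k \omega_a^k - q^k \psi_a\psi_a^*$ from \Cref{generic reduction relations} lets me enforce $\psi_a$-before-$\psi_a^*$ ordering, leaving four possibilities for the $\psi$-content of each block: $1$, $\psi_a$, $\psi_a^*$, or $\psi_a\psi_a^*$. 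Finally, I would reduce each $\omega_a^{v_a}$ to the range $\{0,\dotsc,2k-1\}$: when the $\psi$-content is non-trivial, \Cref{eq:reduction_pww,eq:reduction_dww} give reductions by $2k$ directly; when it is trivial, the identity $\omega_a^{2k} = (1-q^{-2k})\psi_a\psi_a^*\omega_a^k + q^{-2k}$ from \Cref{generic reduction relations} replaces the unwanted $\omega_a^{2k}$ by terms to which the non-trivial case reductions then apply. A suitable well-founded complexity measure on monomials ensures termination in $\vecspan{\mcB}$.

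For linear independence, I would build a representation of $\Cl_q(n,k)$ on the free $\field$-vector space $N$ with basis indexed by $\mcB$. For each generator $g$ and each $b \in \mcB$, I declare $g \cdot b$ to be the normal form of $gb$ produced by the reduction algorithm above. Once I verify that this assignment satisfies every defining relation of \Cref{rels defining qcl}, $N$ is a genuine left $\Cl_q(n,k)$-module. Then the action of any $b \in \mcB \subset \Cl_q(n,k)$ on the distinguished basis vector of $N$ corresponding to $1 \in \mcB$ returns the basis vector indexed by $b$, so any nontrivial linear relation among the elements of $\mcB$ in $\Cl_q(n,k)$ would descend to a nontrivial linear relation among basis vectors of $N$, a contradiction.

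The main obstacle is verifying that this prescription respects all defining relations, which is equivalent to confluence of the reduction in the sense of the Bergman diamond lemma. The most delicate checks concern the interaction of the $\omega_a^{2k}$-reduction, which converts a bare $\omega_a^{2k}$ into an expression involving $\psi_a\psi_a^*\omega_a^k$, with the $a=b$ specializations of the $\psi\psi^*$-commutation and with the identity $\psi_a^*\psi_a = q^k\omega_a^k - q^k\psi_a\psi_a^*$. Both routes produce $\omega_a^k$-corrections whose consistent bookkeeping is exactly what pins the dimension to $(8k)^n$ rather than a larger value; the remaining compatibility checks for cross-index relations reduce essentially to the rank-one case via the sorting step.
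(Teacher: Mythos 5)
Your proposal is correct and takes essentially the same approach as the paper: both prove spanning via the same reduction rules from \Cref{reduction rules} and \Cref{generic reduction relations}, and both reduce linear independence to checking that the rewriting system is compatible/confluent in the sense of Bergman's diamond lemma. Your free-module construction for linear independence is a standard unpacking of the diamond lemma mechanism rather than a genuinely different route, and like the paper you flag but do not carry out the detailed ambiguity checks.
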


\begin{proof}
Clearly $\mcB$ contains $1$ and the generators of $\Cl_q(n, k)$.
Thus, we need to show that $\mcB$ is linearly independent and closed under multiplication.
Together with Lemma~\ref{generic reduction relations}, the $q$- and skew-commuting relations show that $\mcB$ is closed under multiplication.
Indeed, these are a set of rewriting rules to reduce any product of generators into a linear combination of elements in $\mcB$.
Furthermore, note that we cannot apply the rewriting rules to any element of $\mcB$ (after having chosen an order on the elements $\psi_a, \psi_a^*, \omega_a$).
Hence, we just need to show $\mcB$ is linearly independent.

To show the linear independence, we need to show that the result of the rewriting rules is unique and that they undo the application of any of the defining relations.
The latter property is clear (the only non-trivial one is the $q$-commutor relations between $\psi_a$ and $\psi_a^*$).
The uniqueness is a straightforward argument using the diamond lemma.
\end{proof}

Observe that the linear independence of the monomials in $\mcB$ does not depend on the particular order of the factors we chose in \Cref{thm:basis_general}; other orderings are possible and in fact they all differ by a power of $q$ and a sign with the exception of $\psi_a^* \psi_a$.
For the replacement $\psi_a \psi_a^* \mapsto \psi_a^* \psi_a$, we simply use the alternative reduction rule $\psi_a \psi_a^* = \omega_a^k - q^{-k} \psi_a^* \psi_a$.

We immediately obtain the dimension of $\Cl_q(n, k)$ from \Cref{thm:basis_general}.

\begin{cor}\label[cor]{clqnk dim}
We have
\[
\dim \Cl_q(n, k) = (8k)^n.
\]
\end{cor}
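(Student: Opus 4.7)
The plan is to read the dimension directly off the explicit basis $\mcB$ constructed in \Cref{thm:basis_general}. Since $\mcB$ is a basis of $\Cl_q(n,k)$ as a $\field$-vector space, we have $\dim \Cl_q(n,k) = \abs{\mcB}$, so it suffices to count the monomials
\[
\prod_{a=1}^n \psi_a^{p_a} (\psi_a^*)^{d_a} \omega_a^{v_a}
\]
parametrized by the tuples $(p_a, d_a, v_a)_{a=1}^n$.

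The parameters for distinct indices $a \in \{1, \ldots, n\}$ are independent, so it is enough to count the number of admissible triples $(p_a, d_a, v_a)$ for a single index $a$ and raise the result to the $n$th power. For each $a$ there are $2$ choices for $p_a \in \{0,1\}$, $2$ choices for $d_a \in \{0,1\}$, and $2k$ choices for $v_a \in \{0,1,\dotsc,2k-1\}$, giving $2 \cdot 2 \cdot 2k = 8k$ admissible local triples. Taking the product over all $n$ indices yields $\abs{\mcB} = (8k)^n$, which is the desired dimension.

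There is no substantive obstacle here: all of the work has already been done in proving that $\mcB$ is in fact a basis (in particular, the linear independence via the diamond lemma argument). The corollary is a purely combinatorial consequence, and the only thing to verify is that the indexing set has the claimed size.
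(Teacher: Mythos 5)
Your proposal is correct and matches the paper exactly: the paper derives this corollary immediately from \Cref{thm:basis_general} by counting the monomials in $\mcB$, which is precisely the $2 \cdot 2 \cdot 2k = 8k$ choices per index that you spell out. You simply make explicit the elementary counting the paper leaves implicit.
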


Now notice that when $q^{2k} = 1$, \Cref{eq:omega_factoring_unity} implies that $\omega_a^{2k} = 1$.
We can also see this from the defining relations as
\[
\psi_a \psi_a^* + q^k \psi_a^* \psi_a = \omega_a^{-k} = \omega_a^k
\]
since $q^k = q^{-k} = \pm 1$.
Conversely, when $q^{2k} \neq 1$, we can construct another basis that demonstrates we have a commutative algebra of dimension $(4k)^n$ generated by $\omega_1, \ldots, \omega_n$.

\begin{thm}
\label{thm:basis_alt}
Suppose $q^{2k} \neq 1$.
Consider the set $\mcB'$ of monomials
\[
\prod_{a=1}^n \psi_a^{p_a} (\psi_a^*)^{d_a} \omega_a^{v_a},
\]
with $p_a, d_a \in \{0, 1\}$ and $v_a \in \{0, 1, \dotsc, 4k-1\}$, satisfying the following conditions for all $a$:
\begin{enumerate}
\item $p_a + d_a < 2$,
\item $v_a \leq k$ if $p_a + d_a = 1$.
\end{enumerate}
Then $\mcB'$ is a basis of $\Cl_q(n, k)$.
\end{thm}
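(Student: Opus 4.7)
The plan is to derive $\mcB'$ from the basis $\mcB$ of \Cref{thm:basis_general} using two simplifications enabled by the hypothesis $q^{2k} \neq 1$. First, \Cref{eq:psi_products} expresses $\psi_a \psi_a^*$ as a polynomial in $\omega_a^{\pm k}$, so the $(p_a, d_a) = (1,1)$ monomials in $\mcB$ can be absorbed into pure $\omega$-polynomials. Second, \Cref{eq:reduction_vi} eliminates $\omega_a^{-k}$ in favor of $\omega_a^k$ and $\omega_a^{3k}$, producing expressions in $\field[\omega_a]$ with exponents in $\{0, 1, \ldots, 4k-1\}$. Since both $\mcB$ and $\mcB'$ are parametrized by $n$-tuples of local data at each index $a$, and the defining relations move generators past indices $b \neq a$ only by (anti)commutation without mixing indices, the change-of-basis factors over $a$, so it suffices to treat $n=1$.

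Working in a single block, combining \Cref{eq:psi_products} with \Cref{eq:reduction_vi} yields the identity
\[
\psi_a \psi_a^* \omega_a^{v_a} = \frac{q^k \omega_a^{v_a + 3k} - q^{-k} \omega_a^{v_a + k}}{q^k - q^{-k}}.
\]
This rewrites each monomial of $\mcB$ with $(p_a, d_a) = (1,1)$ as a $\field$-linear combination of $\omega_a^v$, and any exponent $\geq 4k$ is folded back into $\{0, \ldots, 4k-1\}$ by \Cref{eq:reduction_v4}. The remaining elements of $\mcB$, those with $(p_a, d_a) \neq (1,1)$, are already in the form demanded by $\mcB'$, where the reduced range of $v_a$ in the presence of $\psi_a$ or $\psi_a^*$ is governed by \Cref{eq:reduction_pww,eq:reduction_dww}. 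Spanning of $\mcB'$ follows, and linear independence is obtained by comparing $|\mcB'|$ to $\dim \Cl_q(n,k) = (8k)^n$ using \Cref{clqnk dim}: a spanning set of the right cardinality must be a basis.

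The main obstacle is verifying invertibility of the change-of-basis matrix. If one orders basis elements by the highest $\omega_a$-exponent appearing, the rewriting of $\psi_a \psi_a^* \omega_a^{v_a}$ produces a top-degree term with coefficient proportional to $q^k/(q^k - q^{-k})$, while its lower-exponent terms are absorbed by the original $(0,0)$ portion of $\mcB$. Thus the transition matrix from $\mcB$ to $\mcB'$ is upper-triangular with nonzero diagonal precisely when $q^k - q^{-k} \neq 0$, i.e., exactly when $q^{2k} \neq 1$. This hypothesis is essential: when $q^{2k} = 1$, \Cref{eq:omega_factoring_unity} forces $\omega_a^{2k} = 1$, so the extended exponents $v_a \in \{2k, \ldots, 4k-1\}$ in the $(0,0)$-portion of $\mcB'$ collapse onto smaller-exponent elements and $\mcB'$ fails to be linearly independent.
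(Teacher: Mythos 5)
Your proof is essentially the paper's second (change-of-basis) proof: both rewrite the $(1,1)$-monomials $\psi_a\psi_a^*\omega_a^{v_a}$ of $\mcB$ via \cref{eq:psi_products,eq:reduction_vi} (equivalently \cref{eq:replaced_qcomm_rels}), fold exponents $\geq 4k$ down with \cref{eq:reduction_v4}, and isolate $q^{2k} \neq 1$ as exactly the hypothesis that keeps the relevant coefficients nonzero. The only real difference is how each concludes: the paper writes down the explicit two-sided inverse on basis monomials, while you conclude by a spanning-plus-cardinality argument supplemented by an ordering-by-highest-$\omega$-exponent triangularity sketch; these are minor variations on the same idea.

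There is, however, a cardinality discrepancy that both your argument and the paper's own proof gloss over. Reading the constraint ``$v_a \leq k$ when $p_a + d_a = 1$'' literally gives $\abs{\mcB'} = (4k + 2(k+1))^n = (6k+2)^n$, which equals $(8k)^n$ only for $k = 1$. The bound should read $v_a < 2k$, the range enforced by \cref{eq:reduction_pww,eq:reduction_dww}; that restores $\abs{\mcB'} = (4k + 2\cdot 2k)^n = (8k)^n$. (The paper's proof makes the same silent substitution when it asserts that every $\mcB$-monomial with all $p_a + d_a < 2$ already lies in $\mcB'$.) Your spanning step and your appeal to those two reduction rules are consistent with the corrected bound, so the approach is sound, but your key step ``a spanning set of the right cardinality must be a basis'' does depend on using $v_a < 2k$ rather than the $v_a \leq k$ stated in the theorem; it would be worth making that explicit so the dimension count actually closes.
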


We provide two proofs of \Cref{thm:basis_alt}.
The first uses a series of reduction rules. It is analogous to the proof of \Cref{thm:basis_general}.
The second uses an explicit invertible change-of-basis map.

\begin{proof}[Proof: Reduction rules]
The reduction rules are given by \Cref{reduction rules}, the defining skew-commuting and $q$-commuting relations, and 
\begin{subequations}
\label{eq:replaced_qcomm_rels}
\begin{align}
\psi_a \psi_a^* & = \frac{q^k \omega_a^{3k} - q^{-k} \omega_a^k}{q^k - q^{-k}} = \frac{\omega_a^k (q^{2k} \omega_a^{2k} - 1)}{q^{2k} - 1}, \label{eq:replaced_qcomm_pd}
\\ \psi_a^* \psi_a & = \frac{q^{2k} \omega_a^k (1 - \omega_a^{2k})}{q^k - q^{-k}} = \frac{q^{3k} \omega_a^k (1 - \omega_a^{2k})}{q^{2k} - 1},
\end{align}
\end{subequations}
which are obtained from \Cref{eq:psi_products} by removing $\omega_a^{-k}$ using \Cref{eq:reduction_vi}.
The remainder of the proof is similar to the proof of \Cref{thm:basis_general}.
\end{proof}

\begin{proof}[Proof: Change-of-basis]
We will construct an invertible linear map from $\mcB$ from Theorem~\ref{thm:basis_general} to $\mcB'$.
Consider a monomial $\prod_{a=1}^n \psi_a^{p_a} (\psi_a^*)^{d_a} \omega_a^{v_a}$ in $\mcB$. It belongs to $\mcB'$ if and only if $p_a + d_a < 2$ for all $a$, so we need only consider the case when $p_a + d_a = 2$ for some $a$.
Using \Cref{eq:replaced_qcomm_pd}, we can rewrite
\[
\psi_a \psi_a^* \omega_a^{v_a} = \frac{q^{2k} \omega_a^{3k+v_a} - \omega_a^{k+v_a}}{q^{2k} - 1}.
\]
If $0 \leq v_a < k$, this is an element of $\mcB'$ with inverse image
\[
\omega_a^{3k+v_a} = q^{-2k} \bigl( (q^{2k} - 1) \psi_a \psi_a^* \omega_a^{v_a} + \omega_a^{k+v_a} \bigr) \in \mcB.
\]
Otherwise we have $k \leq v_a < 2k$, and so we may apply \Cref{eq:reduction_v4} to obtain
\begin{align*}
\psi_a \psi_a^* \omega_a^{v_a} & = \frac{q^{2k} \omega_a^{v_a-k} \bigl( (1 + q^{-2k}) \omega_a^{2k} - q^{-2k} \bigr) - \omega_a^{k+v_a}}{q^{2k} - 1}
= \frac{\omega_a^{v_a+k} - q^{-2k} \omega_a^{v_a-k}}{q^{2k} - 1} \in \mcB'.
\end{align*}
In this case, the inverse maps to
\[
\omega_a^{2k+u_a} = (q^{2k} - 1) \psi_a \psi_a^* \omega_a^{u_a+k} + q^{-2k} \omega_a^{u_a} \in \mcB,
\]
where $u_a = v_a - k$ and $0 \leq u_a < v_k$.
\end{proof}

We note that there is a commutative subalgebra $\mcC_q(n,k)$ of dimension $(4k)^n$ in $\Cl_q(n,k)$ generated by $\langle \omega_a, \psi_a \psi_a^{\dg} \mid a = 1, \dotsc, n \rangle$ regardless of whether $q^{2k} = 1$.
\Cref{thm:basis_alt} says that when $q^{2k} \neq 1$, the subalgebra $\mcC_q(n,k)$ is isomorphic to
\begin{equation}
\label{eq:commutative_quotient_nonroot_unity}
\field [\omega_1, \dotsc, \omega_n] / ( \omega_1^{4k} - (1 + q^{-2k}) \omega_1^{2k} + q^{-2k}, \dotsc, \omega_n^{4k} - (1 + q^{-2k}) \omega_n^{2k} + q^{-2k} ).
\end{equation}
Contrast this with the case $q^{2k} = 1$, where this commutative subalgebra is isomorphic to
\begin{equation}
\label{eq:commutative_quotient_root_unity}
\field [\omega_1, \eta_1, \dotsc, \omega_n, \eta_n]  / ( \omega_1^{2k} - 1, \eta_1^2 - \eta_1 \omega_1^k, \dotsc, \omega_n^{2k} - 1, \eta_n^2 - \eta_n \omega_n^k )
\end{equation}
under the map $\omega_a \mapsto \omega_a$ and $\eta_a \mapsto \psi_a \psi_a^{\dg}$.
We can compute a Gr\"obner basis for the ideal in~\eqref{eq:commutative_quotient_root_unity} with respect to degree reverse lexicographic order by taking its generators and adding $\eta_a^3 - \eta_a$ for all $a = 1, \dotsc, n$.
While not immediate, it turns out the commutative algebras are in fact isomorphic as we will see below.

The bases in \Cref{thm:basis_general} and \Cref{thm:basis_alt} have been implemented in \textsc{SageMath} by the second author.

\subsection{Structure Theorem}
\label{clqnk alg struct}

We construct a quantum analog of the isomorphism $\Gamma_V$ defined in \Cref{cl tensor m into clnm}. The (anti)commutators
\begin{equation}
\label{eq:anticommutators}
\{ \psi_a, \psi_a^{\dg} \} 
= (1 - q^k) \psi_a \psi_a^{\dg} + q^k \omega_a^k,
\qquad\qquad
[ \psi_a, \psi_a^{\dg} ] 
= (1 + q^k) \psi_a \psi_a^{\dg} - q^k \omega_a^k,
\end{equation}
will play an important role in many calculations in this section.
Clearly $\{ \psi_a, \psi_a^{\dg}\} \neq 0, 1$ for all $q \in \field^{\times}$.
We first show that the anticommutators are nontrivial central elements.
This does not have a classical analog as the classical anticommutation relations state $\{\psi_a, \psi_a^{\dg}\} = 1$.
Let $Z_q(n, k) := Z(\Cl_q(n, k))$ denote the center of $\Cl_q(n, k)$.
The next result follows from a straightforward computation, left as an exercise to the reader, using the reduction rules of \Cref{generic reduction relations}.

\begin{lem}\label[lem]{anticomm are central}
For each $a = 1, \ldots, n$, the anticommutator $\{\psi_a, \psi_a^\dg\}$ belongs to $Z_q(n, k)$ and $\{\psi_a, \psi_a^\dg\}^2 = 1$.
\end{lem}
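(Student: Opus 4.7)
The plan is to verify the two claims separately, in both cases reducing to elementary manipulations using the defining relations and the reduction rules of \Cref{generic reduction relations}.

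For centrality, I would argue that $\{\psi_a,\psi_a^\dg\}$ commutes with each of the generators $\psi_b, \psi_b^\dg, \omega_b$ for $b = 1,\dotsc,n$. The case $b \neq a$ is the easiest: since $\{\psi_a,\psi_a^\dg\}$ has $\ZZ^n$-degree $0$ and the generators $\psi_a,\psi_a^\dg,\omega_a$ commute or anticommute with $\psi_b,\psi_b^\dg,\omega_b$ for $b\neq a$, two sign flips cancel. For the case $b = a$, the commutation with $\psi_a$ is immediate from $\psi_a\{\psi_a,\psi_a^\dg\} - \{\psi_a,\psi_a^\dg\}\psi_a = \psi_a^2\psi_a^\dg - \psi_a^\dg\psi_a^2 = 0$ using $\psi_a^2 = 0$, and the commutation with $\psi_a^\dg$ is entirely analogous using $(\psi_a^\dg)^2 = 0$. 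For $\omega_a$, the scalars produced by $\omega_a\psi_a = q\psi_a\omega_a$ and $\omega_a\psi_a^\dg = q^{-1}\psi_a^\dg\omega_a$ cancel, so $\omega_a$ commutes with $\psi_a\psi_a^\dg$ (and clearly with $\omega_a^k$), hence with the whole expression $(1-q^k)\psi_a\psi_a^\dg + q^k\omega_a^k$ from \Cref{eq:anticommutators}.

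For the identity $\{\psi_a,\psi_a^\dg\}^2 = 1$, I would expand the square and immediately drop the cross terms using $\psi_a^2 = (\psi_a^\dg)^2 = 0$ to get
\[
\{\psi_a,\psi_a^\dg\}^2 = (\psi_a\psi_a^\dg)^2 + (\psi_a^\dg\psi_a)^2.
\]
The two reduction rules from \Cref{generic reduction relations} give $(\psi_a\psi_a^\dg)^2 = \psi_a\psi_a^\dg\omega_a^k$ and $(\psi_a^\dg\psi_a)^2 = \psi_a^\dg(\psi_a\psi_a^\dg\psi_a) = \psi_a^\dg\cdot q^k\psi_a\omega_a^k$, where I have used the rule $\psi_a\psi_a^\dg\psi_a = q^k\psi_a\omega_a^k$ and then commuted $\omega_a^k$ past $\psi_a^\dg$ trivially. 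Collecting terms and applying the defining relation $\psi_a\psi_a^\dg + q^k\psi_a^\dg\psi_a = \omega_a^{-k}$ yields
\[
\{\psi_a,\psi_a^\dg\}^2 = (\psi_a\psi_a^\dg + q^k\psi_a^\dg\psi_a)\omega_a^k = \omega_a^{-k}\omega_a^k = 1.
\]

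No step is expected to be a serious obstacle; the main care needed is tracking the $q$-factors produced when commuting $\omega_a^k$ past $\psi_a$ and $\psi_a^\dg$, so that the final combination matches the defining relation exactly. An alternative would be to work from the closed form $\{\psi_a,\psi_a^\dg\} = (1-q^k)\psi_a\psi_a^\dg + q^k\omega_a^k$ and apply the last identity in \Cref{generic reduction relations} together with $(\psi_a\psi_a^\dg)^2 = \psi_a\psi_a^\dg\omega_a^k$, but the direct expansion above is more transparent.
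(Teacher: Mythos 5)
Your proof is correct and matches the approach the paper implicitly prescribes: the paper states the lemma ``follows from a straightforward computation, left as an exercise to the reader, using the reduction rules of \Cref{generic reduction relations},'' and you carry out exactly that computation, using $\psi_a^2 = (\psi_a^\dg)^2 = 0$, the $\ZZ^n$-grading, and the reduction rules to reduce $\{\psi_a,\psi_a^\dg\}^2$ to $\omega_a^{-k}\omega_a^k = 1$ via the defining relation.
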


We need quantum analogs of the volume elements defined by \Cref{classical volume elt}.
We proceed exactly as in the classical case.
For each $r = 1, \ldots, n$, the \defn{quantum volume} or \defn{chirality} element $f_r$ of the subalgebra $\Cl_q(r, k) \subseteq \Cl_q(n, k)$ is given by 
\begin{align}\label{volume elt}
	f_r = [\psi_1, \psi_1^\dg] \cdots [\psi_r, \psi_r^\dg].
\end{align}

Like in the classical case, we can express each volume element as a product of standardized coordinates:
\[
	f_r = \epsilon_1 \cdots \epsilon_{2r},
\]
with $\epsilon_j$ given by the same formulae as in \Cref{classical std coords}.
The standardized coordinates in $\Cl_q(n, k)$ are a quantum analog of a basis of $\field^{2n} \iso \field^n \oplus (\field^n)^*$ where the defining bilinear form is diagonal, but they do \textit{not} satisfy $\{\epsilon_a, \epsilon_b\} = \delta_{ab}$.
Nevertheless, the anticommutators $\{\epsilon_a, \epsilon_b\}$ are still central; this follows from an easy calculation left as an exercise to the reader.

\begin{prop}\label[prop]{std coords rels}
Let $\epsilon_1, \ldots, \epsilon_{2n}$ denote the standardized coordinates of $\Cl_q(n, k)$ defined using formulae as in \Cref{classical std coords}. Then
\[
	\{\epsilon_a, \epsilon_b \} = \epsilon_a \epsilon_b + \epsilon_b \epsilon_a 
	= 2 \delta_{ab} \{\psi_{\lceil a/2\rceil}, \psi_{\lceil a/2 \rceil}^\dg\}
\] 
is central in $\Cl_q(n, k)$.
\end{prop}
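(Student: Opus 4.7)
The plan is a direct case-by-case expansion of $\{\epsilon_a, \epsilon_b\}$ using $\epsilon_{2j-1} = \psi_j^\dg - \psi_j$, $\epsilon_{2j} = \psi_j^\dg + \psi_j$, and the defining relations from \Cref{clqnk defn}. Setting $i = \lceil a/2 \rceil$ and $j = \lceil b/2 \rceil$, the coordinate $\epsilon_a$ involves only $\psi_i, \psi_i^\dg$ while $\epsilon_b$ involves only $\psi_j, \psi_j^\dg$, so the natural split is into three cases: (i) $i \neq j$; (ii) $i = j$ with $a \neq b$; and (iii) $a = b$. In each case the conclusion will be that $\{\epsilon_a, \epsilon_b\}$ either vanishes (trivially central) or reduces to a scalar multiple of $\{\psi_j, \psi_j^\dg\}$, which is central by \Cref{anticomm are central}.

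In case (i), the defining relations force each of $\psi_i, \psi_i^\dg$ to anticommute with each of $\psi_j, \psi_j^\dg$. Distributing the anticommutator over the linear combinations defining $\epsilon_a$ and $\epsilon_b$ then yields a sum of four cross-pair anticommutators, each of which is zero, so $\{\epsilon_a, \epsilon_b\} = 0$. In case (ii), so $\{a,b\} = \{2j-1, 2j\}$, I would expand both $\epsilon_{2j-1}\epsilon_{2j}$ and $\epsilon_{2j}\epsilon_{2j-1}$; the nilpotency relations $\psi_j^2 = (\psi_j^\dg)^2 = 0$ (immediate from \Cref{rels defining qcl}) eliminate the square terms, and the remaining cross terms $\psi_j^\dg\psi_j$ and $\psi_j\psi_j^\dg$ enter with opposite signs in the two orderings, so they cancel and the anticommutator vanishes.

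Finally, in case (iii) I would expand $\epsilon_a^2$; nilpotency again removes the squared terms, and the surviving cross terms collapse to $\pm\{\psi_j, \psi_j^\dg\}$, which \Cref{anticomm are central} identifies as central. Combining the three cases gives the stated formula. The main obstacle, such as it is, is careful sign bookkeeping in case (iii) to distinguish the $a$-odd and $a$-even subcases and reconcile them with the form of the right-hand side; everything else is a routine application of the skew-commutation relations of \Cref{clqnk defn}.
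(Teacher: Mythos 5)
Your case decomposition and direct expansion are exactly the ``easy calculation'' the paper has in mind; the authors explicitly leave this proposition as an exercise (``this follows from an easy calculation left as an exercise to the reader''), so there is no paper proof to compare against. Cases (i) and (ii) go through as you describe.

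In case (iii), however, carrying out the expansion carefully gives $\epsilon_{2j-1}^2 = (\psi_j^{\dg} - \psi_j)^2 = -(\psi_j\psi_j^{\dg} + \psi_j^{\dg}\psi_j) = -\{\psi_j, \psi_j^{\dg}\}$ while $\epsilon_{2j}^2 = (\psi_j^{\dg} + \psi_j)^2 = +\{\psi_j, \psi_j^{\dg}\}$, so $\{\epsilon_a, \epsilon_a\} = 2\epsilon_a^2 = (-1)^a\, 2\,\{\psi_{\lceil a/2\rceil}, \psi_{\lceil a/2\rceil}^{\dg}\}$. The $(-1)^a$ factor does not appear in the proposition's displayed formula, which therefore cannot be matched for odd $a$ without inserting the sign. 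This is consistent with the classical picture, where the $\epsilon$'s diagonalize a signature-$(n,n)$ form (cf.\ the authors' own computation $\Cl_{(1)}\cong\Cl_{1,1}$ later in the paper, which uses $\epsilon_1^2 = -1$ and $\epsilon_2^2 = +1$); the omitted sign reads as a typo in the statement rather than a gap in your approach. Centrality is unaffected since $-\{\psi_j,\psi_j^{\dg}\}$ is central if and only if $\{\psi_j,\psi_j^{\dg}\}$ is, by \Cref{anticomm are central}. When you write up case (iii), make the $(-1)^a$ explicit rather than trying to force agreement with the displayed right-hand side as stated.
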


Next we prove a quantum analog of~\cite[Prop.~3.3]{michelsohn_lawson}, obtaining some relations involving $f_r$.
We note that many of the signs there vanish here because $f_r$ is the product of an \textit{even} number of factors.

\begin{prop}\label[prop]{volume elt props}
	Let $f_r$ denote a quantum volume element in $\Cl_q(n, k)$ as defined in~\eqref{volume elt}.
	Let $\phi_a$ denote either $\psi_a$ or $\psi_a^\dg$.
	\begin{itemize}
	\item If $a > r$, then $\phi_a f_r = f_r \phi_a$ and $\omega_a f_r = f_r \omega_a$.
	\item If $a \leq r$, then $\phi_a f_r = - f_r \phi_a$.
	\item For any $r, s \leq n$, we have $f_r f_s = f_s f_r$ and $f_r^2 = 1$.
	\end{itemize}
\end{prop}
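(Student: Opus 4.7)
The plan is to reduce each assertion to a statement about single factors $[\psi_b,\psi_b^\dagger]$ and then apply the defining relations together with \Cref{generic reduction relations}.

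For the first two bullets I would analyze how a generator interacts with a single factor $[\psi_b,\psi_b^\dagger]$. If $a\neq b$, the defining skew-commuting relations~\eqref{rels defining qcl} give $\psi_a\psi_b=-\psi_b\psi_a$ and $\psi_a\psi_b^\dagger=-\psi_b^\dagger\psi_a$, so moving $\phi_a$ past both factors of $\psi_b\psi_b^\dagger$ (and separately past $\psi_b^\dagger\psi_b$) incurs two sign flips, yielding $\phi_a[\psi_b,\psi_b^\dagger]=[\psi_b,\psi_b^\dagger]\phi_a$. Likewise $\omega_a$ commutes with $\psi_b$ and $\psi_b^\dagger$ when $a\neq b$, hence with $[\psi_b,\psi_b^\dagger]$. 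This already gives the first bullet: when $a>r$, every factor of $f_r$ has index $b\leq r<a$, so $\phi_a$ and $\omega_a$ pass through $f_r$ unchanged.

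For the second bullet (with $a\leq r$) I would handle the single nontrivial factor $[\psi_a,\psi_a^\dagger]$ by direct computation, using $\psi_a^2=(\psi_a^\dagger)^2=0$:
\begin{align*}
\psi_a[\psi_a,\psi_a^\dagger] &= \psi_a\psi_a\psi_a^\dagger-\psi_a\psi_a^\dagger\psi_a=-\psi_a\psi_a^\dagger\psi_a=-[\psi_a,\psi_a^\dagger]\psi_a,
\end{align*}
and the analogous computation for $\phi_a=\psi_a^\dagger$. Combined with the commutations from the previous paragraph for the other $b\neq a$ factors, we obtain exactly one sign flip, giving $\phi_a f_r=-f_r\phi_a$.

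For the third bullet, the commutativity $f_rf_s=f_sf_r$ follows once I show $[\psi_a,\psi_a^\dagger]$ and $[\psi_b,\psi_b^\dagger]$ commute for $a\neq b$; this is again four sign flips (two for each of the cross-products $\psi_a\psi_b,\ \psi_a\psi_b^\dagger,\ \psi_a^\dagger\psi_b,\ \psi_a^\dagger\psi_b^\dagger$ being moved past each other). This also reduces $f_r^2$ to $\prod_{a=1}^r[\psi_a,\psi_a^\dagger]^2$, so I need only verify $[\psi_a,\psi_a^\dagger]^2=1$. Expanding and using $\psi_a^2=(\psi_a^\dagger)^2=0$,
\begin{equation*}
[\psi_a,\psi_a^\dagger]^2=(\psi_a\psi_a^\dagger)^2+(\psi_a^\dagger\psi_a)^2.
\end{equation*}
By \Cref{generic reduction relations}, $(\psi_a\psi_a^\dagger)^2=\psi_a\psi_a^\dagger\omega_a^k$ and $\psi_a^\dagger\psi_a\psi_a^\dagger\psi_a=\psi_a^\dagger\omega_a^k\psi_a=q^k\omega_a^k\psi_a^\dagger\psi_a$; rewriting $\psi_a^\dagger\psi_a=q^k\omega_a^k-q^k\psi_a\psi_a^\dagger$ and then applying the last identity of \Cref{generic reduction relations}, the $\psi_a\psi_a^\dagger\omega_a^k$ terms cancel and the $\omega_a^{2k}$ contribution collapses to $1$.

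The main obstacle is the final calculation $[\psi_a,\psi_a^\dagger]^2=1$: it is the one step that truly uses the twisted $q$-commutator relations and the quadratic identity for $\omega_a^{2k}$, rather than just the skew-commuting relations and $\psi_a^2=(\psi_a^\dagger)^2=0$. Everything else is a bookkeeping of signs.
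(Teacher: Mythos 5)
Your proof is correct and takes essentially the same approach as the paper: reduce to single factors $[\psi_b,\psi_b^\dagger]$, use double sign flips to show $\phi_a$ commutes past factors with $b\neq a$, and compute directly for the $a=b$ case. Two small remarks on where you differ from the paper's presentation: your argument that $\phi_a[\psi_a,\psi_a^\dagger]=-[\psi_a,\psi_a^\dagger]\phi_a$ is a bit cleaner than the paper's, using only $\psi_a^2=(\psi_a^\dagger)^2=0$ rather than passing through the expansion $[\psi_a,\psi_a^\dagger]=(1+q^k)\psi_a\psi_a^\dagger-q^k\omega_a^k$; and you spell out the computation of $f_r^2=1$, which the paper dismisses as a ``straightforward calculation.'' Your reduction to $[\psi_a,\psi_a^\dagger]^2=(\psi_a\psi_a^\dagger)^2+(\psi_a^\dagger\psi_a)^2$ and the subsequent cancellation using $\psi_a^\dagger\psi_a=q^k\omega_a^k-q^k\psi_a\psi_a^\dagger$ and $\omega_a^{2k}=(1-q^{-2k})\psi_a\psi_a^\dagger\omega_a^k+q^{-2k}$ is exactly right, and you correctly identify this as the one step genuinely requiring the twisted relations rather than mere sign-bookkeeping.
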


\begin{proof}
The anticommutation relations imply $\phi_a$ commutes with $[\psi_b, \psi_b^\dg]$ whenever $a \neq b$.
In addition, combining \Cref{eq:anticommutators} with \Cref{generic reduction relations} implies
\begin{align*}
	[\psi_a, \psi_a^\dg] \psi_a 
	& = (1 + q^k) \psi_a \psi_a^\dg \psi_a - q^{2k} \psi_a \omega_a^k 
	= q^k \psi_a \omega_a^k 
	\\ & = -\psi_a \left((1 + q^k) \psi_a \psi_a^\dg - q^k \omega^k\right)
	= -\psi_a [\psi_a, \psi_a^\dg].
\end{align*}
A similar calculation shows that $[\psi_a, \psi_a^\dg] \psi_a^\dg = -\psi_a^\dg [\psi_a, \psi_a^\dg]$.
This means $\phi_a f_r = f_r \phi_a$ if $a > r$ and $\phi_a f_r = -f_r \phi_a$ if $a \leq r$.
The remaining relations follow from straightforward calculations.
\end{proof}

We are now ready to construct an isomorphism $\Gamma_q \colon \Cl_q(n, k)^{\otimes m} \to \Cl_q(nm, k)$, obtaining the quantum analog of \Cref{cl tensor m into clnm}.
Much like its classical counterpart $\Cln$, the quantum Clifford algebra $\Cl_q(n, k)$ is a superalgebra: it is equipped with the $\ZZ_2$-grading associated to the $\ZZ^n$-grading of~\eqref{eq:ZZn_grading}.
Therefore, the considerations in \Cref{ordinary tensor prod super alg struct} also apply here.
In particular, there is a natural isomorphism $\Cl_q(nm, k) \iso \Cl_q(n, k) \, \widehat{\otimes} \, \cdots \, \widehat{\otimes} \, \Cl_q(n, k)$.
However, using the ordinary tensor product here helps us factor quantum group actions on tensor products of modules through $\Cl_q(nm, k)$, \textit{e.g.}, as in~\cite{willie_a,willie_bd}.
As in the classical case, our isomorphism relies on the volume elements $f_r$ to compensate that we are using the ordinary tensor product instead of the superalgebra tensor product: the $f_r$ introduce signs where needed.

\begin{thm}\label{clqn clqnm embedding}
	Let $\phi_a$ denote either $\psi_a$ or $\psi_a^\dg$.
	Then the map $\Gamma_q\colon \Cl_q(n, k)^{\otimes m} \to \Cl_q(nm, k)$ defined by
\begin{align*}
	1 \otimes \cdots \otimes \phi_a \otimes \cdots \otimes 1 
		&\mapsto
	f_{(j-1)n} \phi_{a + (j-1)n}
	\text{ and} \\
	1 \otimes \cdots \otimes \omega_a \otimes \cdots \otimes 1 
		&\mapsto
	\omega_{a + (j-1)n},
\end{align*}
is an isomorphism of $\ZZ^{nm}$-graded algebras.
The inverse $\inv{\Gamma_q}\colon \Cl_q(nm, k) \to \Cl_q(n, k)^{\otimes m}$ is defined by
\begin{align*}
	\phi_{a + (j-1)n} 
		&\mapsto
	f_n \otimes \cdots \otimes f_n \otimes \phi_a \otimes 1 \otimes \cdots \otimes 1
	\text{ and} \\
	\omega_{a + (j-1)n} 
		&\mapsto
	1 \otimes \cdots \otimes \omega_a \otimes \cdots \otimes 1.
\end{align*}
In particular, 
$
	\Gamma_q(1 \otimes \cdots \otimes \phi_a \phi_b \otimes \cdots \otimes 1) 
	= 
	\phi_{a +(j-1)n} \phi_{b + (j-1)n}.
$ 
In all cases $\phi_a$, $\phi_a \phi_b$, and $\omega_a$ appear in the $j$th tensor factor of $\Cl_q(n, k)^{\otimes m}$. 
\end{thm}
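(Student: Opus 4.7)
The plan is to verify that both $\Gamma_q$ and $\inv{\Gamma_q}$, as specified on generators, extend to well-defined algebra homomorphisms, and then check that they compose to the identity on generators. The $\ZZ^{nm}$-grading then follows automatically, because $[\psi_a, \psi_a^\dg] = (1+q^k)\psi_a \psi_a^\dg - q^k \omega_a^k$ has degree $0$; hence each $f_r$ is of degree $0$ and both maps are degree-preserving on generators by construction.

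To show $\Gamma_q$ respects the relations of the ordinary tensor product algebra, I split into two cases: (a) relations within a single tensor factor, and (b) commutation of images from distinct factors. For (a), fix the $j$-th tensor factor; by \Cref{volume elt props}, $f_{(j-1)n}$ commutes with every $\phi_{a+(j-1)n}$ (since $a+(j-1)n > (j-1)n$) and with every $\omega_{a+(j-1)n}$, and $f_{(j-1)n}^2 = 1$. Thus
\[
f_{(j-1)n}\phi_{a+(j-1)n} \cdot f_{(j-1)n}\phi_{b+(j-1)n} = \phi_{a+(j-1)n}\phi_{b+(j-1)n},
\]
which already satisfies the $\Cl_q(n,k)$-relations; this also establishes the closing identity $\Gamma_q(1\otimes\cdots\otimes\phi_a\phi_b\otimes\cdots\otimes 1) = \phi_{a+(j-1)n}\phi_{b+(j-1)n}$. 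For (b) with $j<\ell$, the index bound $a+(j-1)n \leq jn \leq (\ell-1)n$ together with \Cref{volume elt props} forces $\phi_{a+(j-1)n}$ to anticommute with $f_{(\ell-1)n}$; the resulting sign is exactly canceled by the anticommutation $\phi_{a+(j-1)n}\phi_{b+(\ell-1)n} = -\phi_{b+(\ell-1)n}\phi_{a+(j-1)n}$ in $\Cl_q(nm,k)$. Since $\phi_{b+(\ell-1)n}$ commutes with $f_{(j-1)n}$ and the $f_r$'s commute among themselves (and with all $\omega$'s), images from distinct tensor factors indeed commute.

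A parallel slot-by-slot analysis shows $\inv{\Gamma_q}$ is well-defined: for $j<\ell$, the only tensor slot that can contribute a sign when comparing the images of $\phi_{a+(j-1)n}\phi_{b+(\ell-1)n}$ and $\phi_{b+(\ell-1)n}\phi_{a+(j-1)n}$ is slot $j$, where $\phi_a f_n = -f_n \phi_a$, matching the $\Cl_q(nm,k)$-side. To check $\Gamma_q \circ \inv{\Gamma_q} = \id$ on $\phi_{a+(j-1)n}$, one uses that $\Gamma_q$ maps the copy of $f_n$ sitting in tensor slot $i$ to the ``slice'' $[\psi_{1+(i-1)n},\psi_{1+(i-1)n}^\dg]\cdots[\psi_{n+(i-1)n},\psi_{n+(i-1)n}^\dg]$; telescoping over $i=1,\ldots,j-1$ produces $f_{(j-1)n}$, so the total image is $f_{(j-1)n}\cdot f_{(j-1)n}\phi_{a+(j-1)n} = \phi_{a+(j-1)n}$. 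The reverse composition and the $\omega$-cases are immediate. The principal obstacle is the sign bookkeeping, which rests on the sharp dichotomy of \Cref{volume elt props} that $f_r$ anticommutes with $\phi_s$ precisely when $s\leq r$; once the relevant index inequalities are observed, every sign cancels exactly and no input beyond the defining relations of $\Cl_q(n,k)$ is required.
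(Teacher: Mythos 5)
Your proposal is correct and follows essentially the same approach as the paper's own proof: both rest on Proposition~\ref{volume elt props} (the (anti)commutation of $\phi_a$ and $\omega_a$ with $f_r$ depending on whether $a \leq r$, together with $f_r^2 = 1$), verify homomorphism properties slot-by-slot and then across distinct slots, and conclude by the same telescoping computation showing $\Gamma_q$ maps the $i$-th tensor slot's $f_n$ to the slice $[\psi_{1+(i-1)n},\psi_{1+(i-1)n}^\dg]\cdots[\psi_{n+(i-1)n},\psi_{n+(i-1)n}^\dg]$. You are a bit more explicit than the paper in addressing the $\ZZ^{nm}$-grading (via the degree-$0$-ness of each $[\psi_a,\psi_a^\dg]$) and in checking that $\inv{\Gamma_q}$ itself is a well-defined algebra homomorphism, but the strategy and computations are the same.
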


\begin{proof}
Let $\phi_{a,j}$ denote the tensor product $1 \otimes \cdots \otimes \phi_a \otimes \cdots \otimes 1$ with $\phi_a$ appearing in the $j$th tensor factor. First note that $\Gamma_q$ maps each $\Cl_q(n, k)$ factor homomorphically into $\Cl_q(nm, k)$. In particular, whenever $a \neq b$, \Cref{volume elt props} implies 
\begin{align*}
	\Gamma_q(\phi_{a, j})\Gamma_q(\phi_{b, j})
		&= f_{(j-1)n}^2 \phi_{a + (j-1)n}\phi_{b + (j-1)n} 
		= -\Gamma_q(\phi_{b, j})\Gamma_q(\phi_{a, j}).
\end{align*}
In addition, for any $a, b$, \Cref{volume elt props} implies
\begin{align*}
	\Gamma_q(\omega_{a, j}) \Gamma_q(\psi_{b,j}) \inv{\Gamma_q(\omega_{a, j})} 
		&= f_{(j-1)n} \omega_{a + (j-1)n} \psi_{b + (j-1)n} \inv{\omega_{a + (j-1)n}}
		= q^{\delta_{ab}} \Gamma_q(\psi_{b,j}),
\end{align*}
and
\begin{align*}
	\Gamma_q(\psi_{a,j}) \Gamma_q(\psi_{a,j}^\dg) + q^{\pm k} \Gamma_q(\psi_{a,j}^\dg) \Gamma_q(\psi_{a,j}) 
		= \big\{\psi_{a + (j-1)n}, \psi_{a + (j-1)n}^\dg\big\}_{q^{\pm k}} 
		= w_{a + (j-1)n}^{\mp k} 
		= \Gamma_q(\omega_{a,j})^{\mp k}.  
\end{align*}

Next notice $\Gamma_q$ indeed defines an algebra map on $\Cl_q(n, k)^{\otimes m}$. Without loss of generality, suppose that $j < k$. \Cref{volume elt props} implies $f_{(j-1)n}$ commutes with $\phi_{b + (k-1)n}$ and $\omega_{b + (k-1)n}$. Thus for any $a, b$,
\begin{align*}
	\Gamma_q(\phi_{a,j}) \Gamma_q(\phi_{b, k})
		&= -f_{(j-1)n} f_{(k-1)n} \phi_{a + (j-1)n} \phi_{b + (k-1)n} 
		= \Gamma_q(\phi_{b, k}) \Gamma_q(\phi_{a, k})
		\text{  and} \\[+0.5em]
		\Gamma_q(\phi_{a,j}) \Gamma_q(\omega_{b, k})
		&= \omega_{b + (k-1)n} f_{(j-1)n} \phi_{a + (j-1)n} 
		= \Gamma_q(\omega_{b, k}) \Gamma_q(\phi_{a,j}).
\end{align*}

We conclude by showing $\Gamma_q$ is an isomorphism with inverse $\Gamma_q^{-1}$. It suffices to show that $\Gamma_q \circ \Gamma_q^{-1}$ and $\Gamma_q^{-1} \circ \Gamma_q$ coincide with appropriate identity maps on a set of generators. To this end, notice that if the commutator appears in the $j$th factor of $1 \otimes \cdots \otimes [\psi_a^{\phdg}, \psi_a^\dg] \otimes \cdots \otimes 1$, then 
\begin{align*}
	\Gamma_q(1 \otimes \cdots \otimes [\psi_a^{\phdg}, \psi_a^\dg] \otimes \cdots \otimes 1) 
		= f_{(j-1)n}^2 [\psi_{a + (j-1)n}^{\phdg}, \psi_{a + (j-1)n}^\dg] 
		= \big[\psi_{a + (j-1)n}^{\phdg}, \psi_{a + (j-1)n}^\dg\big]
\end{align*}
because $\Gamma_q$ is an algebra map and $f_r^2 = 1$ by \Cref{volume elt props}. Since $f_n = [\psi_1, \psi_1^\dg] \cdots [\psi_n, \psi_n^\dg]$, we see that
\begin{align*}
	\Gamma_q \circ \Gamma_q^{-1}(\phi_{a + (j-1)n}) 
	&= \Gamma_q(f_n \otimes \cdots \otimes f_n \otimes \phi_a \otimes 1 \otimes \cdots \otimes 1) \\
	&= \prod_{k=1}^{j-1} \left(\prod_{a=1}^n \left[\psi_{a + (j-1)n}^{\phdg}, \psi_{a + (j-1)n}^\dg\right]\right) 
			\cdot f_{(j-1)n} \phi_{a + (j-1)n} \\
	&= f_{(j-1)n}^2 \phi_{a + (j-1)n} \\
	&= \phi_{a + (j-1)n},
\end{align*}
and similarly $\Gamma_q \circ \Gamma_q^{-1}(\omega_{a + (j-1)n}) = \Gamma_q(\omega_{a,j}) = \omega_{a + (j-1)n}$. Hence $\Gamma_q^{-1}$ is in fact a right inverse of $\Gamma_q$. Similar calculations show $\Gamma_q^{-1}$ also inverts $\Gamma_q$ on the left.
\end{proof}

The following structure result is a special case of \Cref{clqn clqnm embedding}.

\begin{cor}\label[cor]{clqn is n fold product of cl1}
	There is an isomorphism of $\ZZ^n$-graded algebras 
	\[
	\Cl_q(n, k) \cong \Cl_q(1, k)^{\otimes n}.
	\]
\end{cor}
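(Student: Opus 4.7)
The corollary is the special case of \Cref{clqn clqnm embedding} obtained by setting the ``inner'' rank equal to $1$ and the number of tensor factors equal to $n$. My plan is therefore to explain that almost no new work is required: one simply invokes the theorem with parameters $(n',m') := (1,n)$ to obtain an isomorphism
\[
\Cl_q(1,k)^{\otimes n} \;\xrightarrow{\;\Gamma_q\;}\; \Cl_q(n,k)
\]
of $\ZZ^n$-graded algebras, where $\Gamma_q$ on the $j$th tensor factor is given by
\[
1\otimes\cdots\otimes \phi_1\otimes\cdots\otimes 1 \;\longmapsto\; f_{j-1}\,\phi_j,
\qquad
1\otimes\cdots\otimes \omega_1\otimes\cdots\otimes 1 \;\longmapsto\; \omega_j.
\]
Here the volume element $f_{j-1}=[\psi_1,\psi_1^\dg]\cdots[\psi_{j-1},\psi_{j-1}^\dg]$ corresponds to the case ``$(j-1)n = j-1$'' of the general formula.

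The only point that deserves a brief remark is that the $\ZZ^n$-grading on the tensor product side (where the $j$th copy of $\Cl_q(1,k)$ contributes to the $j$th coordinate) matches the $\ZZ^n$-grading on $\Cl_q(n,k)$ under $\Gamma_q$. This follows immediately from the fact that each $f_r$ is a product of commutators $[\psi_i,\psi_i^\dg]$ which are of degree $0$ in the $\ZZ^n$-grading, while $\phi_j$ has degree $\pm e_j$ and $\omega_j$ has degree $0$, exactly matching the degree of the corresponding element on the tensor product side.

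There is no real obstacle: the substantive content (that the map is well-defined, an algebra homomorphism, and invertible) has already been established in the proof of \Cref{clqn clqnm embedding}. The corollary merely records the most useful instance, giving a decomposition of $\Cl_q(n,k)$ into rank-one pieces analogous to the classical factorization of $\Cl(V\oplus V^*)$ as an $n$-fold (signed) tensor product of rank-one Clifford algebras.
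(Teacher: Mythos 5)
Your proposal is exactly the paper's approach: the text immediately preceding the corollary states that it is a special case of \Cref{clqn clqnm embedding}, with no further argument given. Your added remark on matching the $\ZZ^n$-gradings is a correct and harmless amplification of what the theorem already delivers.
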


\subsection{The center of a quantum Clifford algebra}
\label{sec:clqnk center}

As we saw in \Cref{anticomm are central}, the anticommutators $\{\psi_a, \psi_a^{\dg}\}$ are nontrivial central elements that are square roots of unity.
We now construct their $k$th roots in $\Cl_q(n, k)$ and show that these roots generate the center $Z_q(n, k)$ of $\Cl_q(n, k)$.

\begin{thm}\label{clqnk center}
	We have
	\[
	 	Z_q(n, k) 
	 	\iso
	 	\field [\ZZ_{2k}] \otimes \cdots \otimes \field [\ZZ_{2k}]
	 	=
	 	\field [\ZZ_{2k}^n].
	\]
	In particular, $Z_q(n, k)$ is generated by the order $2k$ elements
	\begin{equation}
	\label{eq:center_gen}
	z_a = q \omega_a - (q-1) \psi_a^{\phdg} \psi_a^\dg \omega_a^{k+1},
	\end{equation}
	for $a = 1, \ldots, n$.
	Moreover, $\dim Z_q(n,k) = (2k)^n$ and each $z_a$ satisfies
	\begin{equation}\label{anticomm zj rel}
		\psi_a^{\phdg} \psi_a^\dg + \psi_a^\dg \psi_a^{\phdg} = z_a^k.
	\end{equation}
\end{thm}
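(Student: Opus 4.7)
The plan is to reduce to the case $n = 1$ via the structure theorem and then perform an idempotent decomposition of $\Cl_q(1,k)$. By \Cref{clqn is n fold product of cl1}, $\Cl_q(n,k) \cong \Cl_q(1,k)^{\otimes n}$, and the standard fact that $Z(A\otimes_{\field} B) = Z(A)\otimes_{\field} Z(B)$ for finite-dimensional $\field$-algebras (expand a central element in a basis of one factor and commute against the other) reduces the center computation to the rank one case. Under $\Gamma_q$ from \Cref{clqn clqnm embedding}, the generator of $Z_q(1,k)$ sitting in the $j$th tensor factor is carried to $z_j$ (using $f_{j-1}^2 = 1$ from \Cref{volume elt props}), and the relation $\{\psi_a,\psi_a^{\dg}\} = z_a^k$ descends verbatim from $n=1$. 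So the whole proof reduces to analyzing $Z_q(1,k)$.

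In $\Cl_q(1,k)$, I would introduce the degree zero element $p := \psi\psi^{\dg}\omega^{-k}$. Using the identities of \Cref{generic reduction relations} it is straightforward to verify that $p^2 = p$ and that $\omega$ commutes with $p$, so $\omega$ and $p$ generate the degree zero part of the $\ZZ$-grading in~\eqref{eq:ZZn_grading}; this part is $4k$-dimensional by \Cref{thm:basis_general}. Rewriting the $\omega^{2k}$ identity of \Cref{generic reduction relations} as $\omega^{2k} = q^{-2k}(1-p) + p$ reveals the orthogonal idempotent decomposition with $e_1 = 1-p$, $e_2 = p$, satisfying $\omega^{2k}e_1 = q^{-2k}e_1$ and $\omega^{2k}e_2 = e_2$. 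The further identities $\psi\psi^{\dg}\psi = q^k\psi\omega^k$ and $\psi^{\dg}\psi\psi^{\dg} = \psi^{\dg}\omega^k$ yield the ladder relations $e_2\psi = \psi = \psi e_1$, $e_1\psi = \psi e_2 = 0$ (and their mirror images for $\psi^{\dg}$), which make the generators $\psi, \psi^{\dg}$ into transition maps between the $e_1$- and $e_2$-eigenspaces.

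The payoff is that since $p\omega^{2k} = p$, the generator $z$ collapses to the clean form $z = \omega(qe_1 + e_2)$. From here everything is essentially mechanical: $z^j = \omega^j(q^j e_1 + e_2)$ gives $z^{2k} = (q^{-2k}e_1 + e_2)(q^{2k}e_1 + e_2) = 1$ and $z^k = \omega^k(q^k e_1 + e_2) = (1-q^k)\psi\psi^{\dg} + q^k\omega^k$, matching $\{\psi,\psi^{\dg}\}$ via~\eqref{eq:anticommutators}; centrality of $z$ is immediate because both $z\psi$ and $\psi z$ collapse to $\omega\psi$ through the ladder identities (and symmetrically for $\psi^{\dg}$); and linear independence of $\{1,z,\ldots,z^{2k-1}\}$ follows by projecting to the $e_1$-component, where $z^j e_1 = q^j\omega^j e_1$ are scalar multiples of the basis of $e_1\field[\omega] \cong \field[x]/(x^{2k}-q^{-2k})$. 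To show the center equals $\langle z \rangle$, I observe that any central element is $\ZZ$-homogeneous of degree zero, hence of the form $x = \sum_j(a_j e_1 + b_j e_2)\omega^j$; comparing $x\psi$ with $\psi x$ using $\omega^j\psi = q^j\psi\omega^j$ and the linear independence of $\{\psi\omega^j\}$ from \Cref{thm:basis_general} forces $a_j = q^j b_j$, so $x = \sum_j b_j z^j$. The main obstacle will be spotting the idempotent $p$ and the subsequent collapse $z = \omega(qe_1 + e_2)$; once these are in hand, the remaining verifications are formal manipulation rather than combinatorial grinding.
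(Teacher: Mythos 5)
Your proposal is correct and takes a genuinely different route from the paper, which is worth highlighting. The paper, after the same reduction to $n=1$ via \Cref{clqn is n fold product of cl1}, takes a brute-force tack: it expands an arbitrary central $\zeta$ in the basis of \Cref{thm:basis_general}, computes $[\zeta,\psi]$ and $[\zeta,\psi^\dagger]$ in full, and reads off the constraints $b_j = c_j = 0$ and $d_j = (q^{-j}-1)a_{k+j}$ from linear independence. Your approach instead organizes the entire rank-one computation around the idempotent $p = \psi\psi^\dagger\omega^{-k}$, whose idempotence follows directly from $\psi\psi^\dagger\psi\psi^\dagger = \psi\psi^\dagger\omega^k$ and the commutation of $\omega^{-k}$ with $\psi\psi^\dagger$. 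The payoffs are real: the reduction rule $\omega^{2k} = q^{-2k}(1-p) + p$ makes $\omega^{2k}$ act by the two scalars $q^{-2k}$ and $1$ on the $e_1$ and $e_2$ pieces; the ladder relations $e_2\psi = \psi = \psi e_1$, $e_1\psi = \psi e_2 = 0$ (and their mirror images) turn centrality of $z$ into a one-line check; the collapse $z = \omega(qe_1 + e_2)$ makes $z^{2k} = 1$, $z^k = \{\psi,\psi^\dagger\}$, and linear independence of $\{1,\dotsc,z^{2k-1}\}$ all immediate; and it exposes the group-ring structure rather than just verifying it. I checked the non-obvious steps — $p^2 = p$, $p\omega^{2k} = p$ (from $\psi^\dagger\omega^{2k}=\psi^\dagger$), the ladder relations, the collapse of $z$ via $\psi\psi^\dagger\omega^{k+1} = p\omega$, and the computation forcing $a_j = q^j b_j$ from both $[x,\psi]=0$ and $[x,\psi^\dagger]=0$ — and they all go through.

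The one place you are skating is the assertion that any central element is $\ZZ$-homogeneous of degree zero. The center of a $\ZZ$-graded algebra is indeed a graded subalgebra, so you may work degree by degree, but you still owe an argument that there are no central elements of degree $\pm 1$. For $q \neq 1$ this follows from conjugation by $\omega$, which acts by $q^{\pm 1}$ on degree $\pm 1$; but for $q = 1$ (which is permitted, $q \in \field^\times$, and this is precisely the case where $\omega$ itself is central) that argument is vacuous, and one must instead use $[\,\cdot\,,\psi^\dagger]$: if $a(\omega)\psi$ is central, then $0 = [a(\omega)\psi,\psi^\dagger]$ has a $\omega^k$-component forcing $a = 0$, using that $\omega^k$ is invertible. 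The paper sidesteps this by never separating by degree — its simultaneous computation of $[\zeta,\psi]$ handles $b_j, c_j$ and $a_j, d_j$ in one pass. So this is a fillable exposition gap, not a conceptual one, but the word \emph{observe} is doing more work than it should, particularly at $q = 1$.
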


We first show that the elements $z_a$ are in the center, and we write their powers explicitly in terms of our basis from \Cref{thm:basis_general}.

\begin{lem}\label[lem]{zkr formula}
	The elements $z_a$ defined in \Cref{clqnk center} are central in $\Cl_q(n, k)$ and they satisfy
\begin{align}\label{zjr}
	z_a^r = q^r \omega_a^r - (q^r - 1) \psi_a^{\phdg} \psi_a^\dg \omega_a^{k + r}.
\end{align}
In particular, $z_a^k = \{\psi_a, \psi_a^\dg\}$, so $z_a^{2k} = 1$.
\end{lem}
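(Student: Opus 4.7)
My plan is to split the lemma into two independent tasks and then combine them. First I would verify that $z_a$ is central by checking it commutes with each generator. Commutation with every $\omega_b$, $\psi_b$, $\psi_b^\dg$ for $b \neq a$ is immediate: $\omega_a$ and $\psi_a \psi_a^\dg$ each commute or double-anticommute through generators indexed by different sites. Commutation with $\omega_a$ is also clear because the $q$-factors from $\omega_a \psi_a = q \psi_a \omega_a$ and $\omega_a \psi_a^\dg = q^{-1} \psi_a^\dg \omega_a$ cancel, so $\omega_a$ commutes with $\psi_a \psi_a^\dg$. The two nontrivial checks are $z_a \psi_a = \psi_a z_a$ and $z_a \psi_a^\dg = \psi_a^\dg z_a$. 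For the first, the right-hand side collapses to $q \psi_a \omega_a$ because $\psi_a^2 = 0$; on the left, I pull $\psi_a$ past $\omega_a^{k+1}$, apply $\psi_a \psi_a^\dg \psi_a = q^k \psi_a \omega_a^k$ from \Cref{generic reduction relations}, and then use $\psi_a \omega_a^{2k} = q^{-2k} \psi_a$ from \Cref{reduction rules} to reach the same expression. The $\psi_a^\dg$ check is the mirror image, using $\psi_a^\dg \psi_a \psi_a^\dg = \psi_a^\dg \omega_a^k$ and $\psi_a^\dg \omega_a^{2k} = \psi_a^\dg$, with both sides reducing to $\psi_a^\dg \omega_a$.

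Next I would establish the power formula \eqref{zjr} by induction on $r$. The base case $r=1$ is the definition of $z_a$. For the inductive step I compute $z_a^{r+1} = z_a^r \cdot z_a$ by expanding the product of the two known expressions. Three facts clean up the cross terms: $\omega_a$ commutes with $\psi_a \psi_a^\dg$ (used above), the identity $(\psi_a \psi_a^\dg)^2 = \psi_a \psi_a^\dg \omega_a^k$ from \Cref{generic reduction relations}, and the consequence $\psi_a \psi_a^\dg \omega_a^{2k} = \psi_a \psi_a^\dg$ (obtained by right-multiplying $\psi_a \omega_a^{2k} = q^{-2k}\psi_a$ by $\psi_a^\dg$ and commuting $\omega_a^{2k}$ through $\psi_a^\dg$). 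After these substitutions, every non-$\omega$ term is a scalar multiple of $\psi_a \psi_a^\dg \omega_a^{k+r+1}$, and collecting coefficients gives $-q^r(q-1) - q(q^r-1) + (q^r-1)(q-1) = -(q^{r+1}-1)$, as desired.

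Finally, to obtain \eqref{anticomm zj rel} and $z_a^{2k}=1$, I specialize the formula to $r=k$. Using $\psi_a \psi_a^\dg \omega_a^{2k} = \psi_a \psi_a^\dg$ once more gives
\[
z_a^k = q^k \omega_a^k - (q^k-1)\psi_a \psi_a^\dg = (1-q^k)\psi_a \psi_a^\dg + q^k \omega_a^k,
\]
which matches the anticommutator expression in \eqref{eq:anticommutators}, so $z_a^k = \{\psi_a, \psi_a^\dg\}$. Applying \Cref{anticomm are central} then yields $z_a^{2k} = \{\psi_a, \psi_a^\dg\}^2 = 1$.

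The only real obstacle is bookkeeping: tracking $q$-powers as $\omega_a$ passes through $\psi_a$ and $\psi_a^\dg$, and carefully reducing $\omega_a$-exponents modulo $2k$ where appropriate. Once the two reduction rules $\psi_a\psi_a^\dg \cdot \psi_a\psi_a^\dg = \psi_a\psi_a^\dg \omega_a^k$ and $\psi_a\psi_a^\dg \omega_a^{2k} = \psi_a\psi_a^\dg$ are in hand, every cross term lives in the same one-dimensional subspace and the scalar arithmetic is routine.
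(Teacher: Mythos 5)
Your proof is correct and follows the same route the paper takes: a direct commutation check for centrality using \Cref{reduction rules} and \Cref{generic reduction relations}, induction on $r$ for the power formula, and then matching $z_a^k$ against \eqref{eq:anticommutators} together with \Cref{anticomm are central}. The paper merely asserts these steps are "straightforward," so you have usefully filled in the bookkeeping the authors elided, including the key simplifications $(\psi_a\psi_a^\dg)^2 = \psi_a\psi_a^\dg\omega_a^k$ and $\psi_a\psi_a^\dg\omega_a^{2k} = \psi_a\psi_a^\dg$ and the correct coefficient arithmetic $-q^r(q-1) - q(q^r-1) + (q^r-1)(q-1) = -(q^{r+1}-1)$.
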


\begin{proof}
	That $z_a$ is central is a straightforward computation showing it commutes with all of the generators of $\Cl_q(n, k)$ using the reduction rules.
	\Cref{zjr} is a straightforward calculation using induction on $r$.
	That $z_a^k = \{\psi_a, \psi_a^{\dg}\}$ is immediate from comparing \Cref{eq:anticommutators} with \Cref{zjr} with $r = k$.
	The last claim is \Cref{anticomm are central}.
\end{proof}

\begin{proof}[Proof of \Cref{clqnk center}.]
	Using the decomposition of \Cref{clqn is n fold product of cl1}, it is sufficient to show the claim for $\Cl_q(1, k)$; that is, $Z_q(1, k) \iso \field [ \ZZ_{2k} ]$.
	
	We will show the central elements $z_1, \ldots, z_1^{2k}$ constitute a basis of $Z_q(1, k)$.
	The elements $z_1, \ldots, z_1^{2k}$ are linearly independent because the elements
	\[
	1, \omega_1, \ldots, \omega_1^{2k-1}, \psi_1 \psi_1^\dg, \psi_1 \psi_1^\dg \omega_1, \ldots, \psi_1 \psi_1^\dg \omega_1^{2k-1}
	\]
	are linearly independent by \Cref{thm:basis_general}.
	Hence the subalgebra generated by $z_1$ is central and isomorphic to $\field [ \ZZ_{2k} ]$ by \Cref{zkr formula}.
	To show there is nothing else in the center, consider an arbitrary central element $\zeta \in Z_q(1, k)$.
	In what follows we omit the subscript $1$ on $\Cl_q(1, k)$ generators for brevity.
	Using \Cref{thm:basis_general}, we write
	\[
	\zeta = \sum_{j=0}^{2k-1} a_j \omega^j + \psi \sum_{j=0}^{2k-1} b_j \omega^j + \psi^\dg \sum_{j=0}^{2k-1} c_j \omega^j + \psi \psi^\dg \sum_{j=0}^{2k-1} d_j \omega^j
	\]
	for some constants $a_j, b_j, c_j, d_j \in \field$.
	By direct computation using the reduction rules, we have
	\begin{align*}
	0 = [\zeta, \psi] & = \psi \sum_{j=0}^{2k-1} \left( (q^j - 1) a_j + q^j d_{j+k} \right) \omega^j + \sum_{j=0}^{2k-1} q^{j+k} c_j \omega^j - \psi \psi^{\dg} \sum_{j=0}^{2k-1} (q^{j+k} + 1) c_j \omega^j,
	\\
	0 = [\zeta, \psi^{\dg}] & = \psi^* \sum_{j=0}^{2k-1} \left( (q^{-j} - 1) a_j + d_{j+k} \right) \omega^j - q^k \sum_{j=0}^{2k-1} b_j \omega^{j+k} + q^k \psi \psi^* \sum_{j=0}^{2k-1} (q^{-j-k} + 1) b_j \omega^j,
	\\
	\sum_{j=0}^{2k-1} b_j \omega^{j+k}
	& = \sum_{j=0}^{k-1} (b_j \omega^k + b_{j+k} q^{-2k}) \omega^j + (1 - q^{-2k}) \sum_{j=0}^{k-1} b_{j+k} \psi \psi^{\dg} \omega^{j+k},
	\end{align*}
	where we take the indices modulo $2k$.
	Therefore, by \Cref{reduction rules} and linear independence from \Cref{thm:basis_general}, we must have $b_j = c_j = 0$ for all $j$.
	Furthermore, we must have $d_j = (q^{-j} - 1) a_{k + j}$.
	This means there are at most $2k$ degrees of freedom, which yields $\dim\left(Z_q(1, k)\right) \leq 2k$ and the claim follows.
\end{proof}

We comment on two consequences of \Cref{clqnk center}.
The first is revealed by \Cref{anticomm zj rel}.
We may view $\Cl_q(n, k)$ as a classical Clifford algebra $\Cl(R^n \oplus (R^*)^n)$, albeit with a slightly rescaled pairing, over the group ring $R = \field [z_1, \ldots, z_n] \iso \field [\ZZ_{2k}^n]$, once we relax the vector space requirement in \Cref{classical clifford alg defn} to admit more general (free) modules over commutative rings.
This interpretation elucidates the conclusion of \Cref{clqnk center}: the classical Clifford algebra is simple, so the only central elements in $\Cl_q(n, k)$ can be the scalars $\field [\ZZ_{2k}^n]$ in the base ring.
This also yields that $\mcC_q(n,k)$ (see~\eqref{eq:commutative_quotient_nonroot_unity} and~\eqref{eq:commutative_quotient_root_unity}) are isomorphic for any value of $q \in \field^{\times}$ as $\mcC_q(n, k) \iso R \oplus R \langle \psi_1 \psi_1^{\dg}, \dotsc, \psi_n \psi_n^{\dg} \rangle$.
The second consequence is revealed by \Cref{eq:center_gen}: the center belongs to the degree $0$ component of $\Cl_q(n, k)$ under the $\ZZ^n$-grading given in \Cref{eq:ZZn_grading}.

\subsection{Quantum Clifford (anti-)involutions}
\label{gradings filtrations involutions}

We define some (anti-)involutions on the quantum Clifford algebra $\Cl_q(n, k)$ that are quantum analogs of (anti-)involutions of the classical Clifford algebra.
For example, we define quantum analogs of the transpose and conjugation maps.

First recall $\Cl_q(n, k)$ is a superalgebra, and so there is the \defn{grade involution} $\alpha$ on $\Cl_q(n, k)$ defined by
\[
\psi_a \mapsto -\psi_a,
\qquad\qquad
\psi_a^* \mapsto -\psi_a^*,
\qquad\qquad
\omega_a \mapsto \omega_a,
\qquad\qquad
q \mapsto q.
\]
When the twist $k$ is even, we could also define a variant $\widetilde{\alpha}$ by having $\omega_a \mapsto -\omega_a$ instead.

Next, we define the \defn{transpose} anti-involution on $\Cl_q(n, k)$ by
\[
\psi_a \mapsto \psi_a,
\qquad\qquad
\psi_a^* \mapsto \psi_a^*,
\qquad\qquad
\omega_a \mapsto q^{-1} \omega_a^{-1},
\qquad\qquad
q \mapsto q.
\]
This is the quantum analog of the transpose on the classical Clifford algebra, and we denote it by $x^t$.

\begin{prop}
The transpose is an anti-involution on $\Cl_q(n,k)$.
Moreover, the transpose commutes with the grade involution.
\end{prop}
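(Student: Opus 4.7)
The plan is to verify the assertion by checking what needs to be checked on the generators and on each defining relation of \Cref{clqnk defn}, since an algebra (anti)homomorphism is determined by its values on generators provided the relations are preserved. Concretely, let $\tau$ denote the proposed map; I will show in order: (i) $\tau$ extends to a well-defined anti-homomorphism; (ii) $\tau \circ \tau = \id$ on generators; (iii) $\tau \circ \alpha = \alpha \circ \tau$ on generators.

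For (i), the key observation is that $\tau$ fixes $\psi_a, \psi_a^*$ and inverts $\omega_a$ up to the scalar $q^{-1}$, while reversing multiplication order. I would check each family of relations in \eqref{rels defining qcl} separately. The purely $\omega$-relations and the purely $\psi$- and $\psi^*$-relations are immediate since those relations are either symmetric in the factors or just state $\omega_a \omega_a^{-1} = 1$. The mixed $q$-commutor relations between $\omega_a$ and $\psi_b$ (resp.\ $\psi_b^*$) require the identity $\psi_b \omega_a^{-1} = q^{\delta_{ab}} \omega_a^{-1} \psi_b$ (resp.\ $\psi_b^* \omega_a^{-1} = q^{-\delta_{ab}} \omega_a^{-1} \psi_b^*$), which follows by conjugating the original relation by $\omega_a^{-1}$; after applying $\tau$ (reversing order) both sides carry a matched factor of $q^{-1}$ and agree. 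The main subtle step is the $q^{\pm k}$-commutor relation $\psi_a \psi_a^* + q^{\pm k} \psi_a^* \psi_a = \omega_a^{\mp k}$: applying $\tau$ reverses the order to give $\psi_a^* \psi_a + q^{\pm k} \psi_a \psi_a^* = \tau(\omega_a^{\mp k})$, and one computes $\tau(\omega_a^{\mp k}) = (q \omega_a)^{\pm k} = q^{\pm k} \omega_a^{\pm k}$ using that $\tau$ is an anti-homomorphism and that $\tau(\omega_a^{-1}) = q \omega_a$. Dividing by $q^{\pm k}$ recovers exactly the \emph{other} of the two defining $q^{\mp k}$-commutor relations, confirming consistency.

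For (ii), on $\psi_a$ and $\psi_a^*$ the map is fixed by definition, so $\tau^2$ acts trivially. On $\omega_a$, a one-line computation gives $\tau^2(\omega_a) = \tau(q^{-1} \omega_a^{-1}) = q^{-1} (q \omega_a) = \omega_a$. For (iii), $\alpha$ and $\tau$ both fix $\omega_a$ up to a scalar and a sign independent of each other, so they commute on $\omega_a$; on $\psi_a$ and $\psi_a^*$, $\tau$ is the identity and $\alpha$ merely changes sign, so the two maps trivially commute.

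I do not expect a serious obstacle: the only nuance is remembering that $\tau$ reverses multiplication when applying it to the mixed relations, and then tracking the factor $q^{-1}$ in $\tau(\omega_a) = q^{-1}\omega_a^{-1}$ which raises to $q^{\pm k}$ under the $k$-th and $(-k)$-th powers and correctly shuttles between the two $q^{\pm k}$-commutor relations. Once this is done, the involutive property and commutation with $\alpha$ are immediate.
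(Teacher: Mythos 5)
Your proposal is correct and takes the same route the paper does: verify on generators that the defining relations are preserved under an order-reversing map, then check $\tau^2 = \id$ and commutation with $\alpha$ on generators. You have simply spelled out the ``direct computation'' the paper invokes, and the key observation — that applying $\tau$ to one $q^{\pm k}$-commutor relation produces the other after pulling out the factor $q^{\pm k}$ from $\tau(\omega_a^{\mp k}) = (q\omega_a)^{\pm k}$ — is exactly the step that needs care.
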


\begin{proof}
A direct computation shows the transpose preserves the defining relations and that both $(x^t)^t = x$ and $\alpha(x^t) = \alpha(x)^t$ for all generators $x \in \Cl_q(n,k)$.
\end{proof}


With the transpose and grade involution in hand, we may define the \defn{Clifford conjugation} on $\Cl_q(n, k)$:
for any $x \in \Cl_q(n,k)$, we set
\[
	\overline{x} = \alpha(x^t).
\]

Next, we separate the anti-involution $\ast$ defined in~\cite[Sec.~5]{Hayashi90} depending on the value $q$ into the product of two separate anti-involutions that do not depend on $q$.
Hayashi used the map $\ast$ to make $\Cl_q(n, k)$ into a $\ast$-algebra and to prove the unitarity of his $\Cl_q(n, k)$-modules~\cite{Hayashi90}.
The first is the anti-involution that we call the \defn{dagger} map, which we define by
\[
\psi_a \mapsto \psi_a^*,
\qquad\qquad
\psi_a^* \mapsto \psi_a,
\qquad\qquad
\omega_a \mapsto \omega_a,
\qquad\qquad
q \mapsto q,
\]
and denote by $x^{\dagger}$ for $x \in \Cl_q(n,k)$.
The second anti-involution is the \defn{duality} map on $\Cl_q(n, k)$, which we define by
\[
\psi_a \mapsto \psi_a^*,
\qquad\qquad
\psi_a^* \mapsto \psi_a,
\qquad\qquad
\omega_a \mapsto \omega_a^{-1},
\qquad\qquad
q \mapsto q^{-1}
\]
and denote by $x^{\vee}$ for $x \in \Cl_q(n,k)$.
Both the dagger and duality maps are different quantum analogs of the natural duality anti-involution $\ast$ on $\Cln$ sending $\psi_a^* \leftrightarrow \psi_a$, although arguably the dagger map is the most natural analog.
We justify calling the latter map the duality map by the relationship with the quantum group discussed below in \Cref{q gp morphisms}.

\begin{prop}
The grade involution, transpose, dagger, and duality maps all commute.
\end{prop}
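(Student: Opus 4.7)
The plan is to verify each of the pairwise commutations by computing the two composed actions on the generating set $\{\psi_a, \psi_a^*, \omega_a\}_{a=1}^n$. Each of the four maps is either a $\field$-algebra homomorphism or anti-homomorphism (with the duality map additionally inverting the scalar $q$), so each pair of compositions is itself a well-defined (anti-)homomorphism and is therefore determined by its values on generators. The commutation of the grade involution $\alpha$ and the transpose $t$ was already asserted in the preceding proposition, leaving five pairs to check.

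A small preliminary observation is useful: since $\omega_a^{-1}$ is not in the generating set, one needs to know how each (anti-)involution $\sigma$ acts on it. Applying $\sigma$ to $\omega_a \omega_a^{-1} = 1$ gives $\sigma(\omega_a^{-1}) = \sigma(\omega_a)^{-1}$, which yields $(\omega_a^{-1})^t = q\omega_a$ and $(\omega_a^{-1})^\vee = \omega_a$, while $\alpha$ and $\dagger$ fix $\omega_a^{-1}$.

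The verification then proceeds case by case. For $\alpha$ paired with $\dagger$ or $\vee$: both orders send $\psi_a \mapsto -\psi_a^*$ and $\psi_a^* \mapsto -\psi_a$, while the action on $\omega_a$ is controlled entirely by the second map since $\alpha$ fixes $\omega_a$. For $\dagger$ and $\vee$: both orders send $\psi_a \mapsto \psi_a$, $\psi_a^* \mapsto \psi_a^*$, $\omega_a \mapsto \omega_a^{-1}$, and $q \mapsto q^{-1}$. For $t$ and $\dagger$: on $\omega_a$ both orders give $q^{-1}\omega_a^{-1}$ (since $\dagger$ fixes $q$ and $\omega_a$), and the $\psi$ generators are swapped identically either way. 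For $t$ and $\vee$: one computes $(\omega_a^\vee)^t = (\omega_a^{-1})^t = q\omega_a$ while $(\omega_a^t)^\vee = (q^{-1}\omega_a^{-1})^\vee = q\omega_a$, using $(q^{-1})^\vee = q$ and $(\omega_a^{-1})^\vee = \omega_a$; the $\psi$ generators again pose no issue.

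The only point requiring care is bookkeeping: tracking the interaction of the scalar $q$ with the anti-homomorphism property (in particular that $(q^{-1})^\vee = q$) and consistently applying the inversion identity above for $\omega_a^{-1}$. Once these are in place, every pair reduces to a routine direct calculation on three generator types, with no genuine obstacle to overcome.
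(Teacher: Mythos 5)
Your proof is correct and takes the same approach the paper uses: a direct check on the generators $\psi_a$, $\psi_a^*$, $\omega_a$ (and $q$) for each pair of maps, using that a composition of (anti-)homomorphisms is again an (anti-)homomorphism and hence determined by its values on generators. The paper's proof is just a one-line statement of this; your version supplies the case-by-case bookkeeping, including the useful observation that $\sigma(\omega_a^{-1}) = \sigma(\omega_a)^{-1}$ follows from applying $\sigma$ to $\omega_a\omega_a^{-1}=1$, which the paper leaves implicit.
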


\begin{proof}
This is a direct computation on the generators, such as $x^{\vee\dagger} = x^{\dagger\vee}$ for any generator $x \in \Cl_q(n,k)$.
\end{proof}

Now by composing the duality and dagger maps,we obtain the involution of $\Cl_q(n,k)$ given by
\[
\psi_a \mapsto \psi_a,
\qquad\qquad
\psi_a^* \mapsto \psi_a^*,
\qquad\qquad
\omega_a \mapsto \omega_a^{-1},
\qquad\qquad
q \mapsto q^{-1}.
\]
This could be said to not have a classical analog as it acts as the identity map on $\psi_a$ and $\psi_a^{\dg}$.
The composition of the transpose and duality maps is the involution defined by
\[
\psi_a \mapsto \psi_a^*,
\qquad\qquad
\psi_a^* \mapsto \psi_a,
\qquad\qquad
\omega_a \mapsto q \omega_a,
\qquad\qquad
q \mapsto q^{-1}.
\]
If $q^{2k} = 1$, then we can construct another involution $\check{\kappa}$ by
\[
\psi_a \mapsto \psi_a^*,
\qquad\qquad
\psi_a^* \mapsto \psi_a,
\qquad\qquad
\omega_a \mapsto q^{-1} \omega_a,
\qquad\qquad
q \mapsto q^{-1}.
\]
These arise from the isomorphism of $\Cln$ induced from $\field^n \oplus (\field^n)^* \iso (\field^n)^* \oplus \field^n$.

\subsection{Quantum group morphisms}
\label{q gp morphisms}

We provide the twist $k$ version of~\cite[Thm.~3.2]{Hayashi90} through a similar direct computation.

Let $U_q(\lieg, k)$ denote a \textit{twisted} form of the Drinfel'd--Jimbo~\cite{Drinfeld85,Jimbo85} quantum group, defined as follows.
Let $\lieg$ denote a semisimple Lie algebra and let $A$ denote its generalized Cartan matrix with the diagonal matrix of root lengths $D = \mathrm{diag}(d_1, \ldots, d_r)$ such that $DA$ is symmetric.
Let $q \in \field$ be transcendental and write $q_i = q^{d_i}$.
Then $U_q(\lieg, k)$ is the  associative $\field$-algebra generated by the elements $E_i, F_i, K_i$, and $\inv{K_i}$, for $i = 1, \ldots, r$, subject to the relations:
\begin{gather}
K_i K_j = K_j K_i, \quad K_i \inv{K_i} = \inv{K_i} K_i = 1, \nonumber\\
K_i E_j \inv{K_i} = q_i^{a_{ij}} E_j, \quad K_i 
F_j \inv{K_i} = q_i^{-a_{ij}} F_j,  \nonumber \\
E_i F_j - F_j E_i = \delta_{ij} \frac{K_i^k - K_i^{-k}}{q_i^k - q_i^{-k}}, \label{uq rels}
\end{gather}
together with the $q$-Serre relations for $i \neq j$,
\begin{equation}\label{uq Serre rels}
	\sum_{m=0}^{1-a_{ij}} (-1)^m
	\begin{bmatrix}
	1 - a_{ij} \\ m
	\end{bmatrix}_{q_i^k}
	X_i^k X_j X_i^{1-a_{ij}-m} = 0.
\end{equation}
In the last equality every $X$ is either an $E$ or an $F$.
The $q$-integer is given by $[n]_{q_i} = \frac{q_i^n - q_i^{-n}}{q_i - \inv{q_i}}$, the $q$-factorial is $[n]_{q_i}! = [n]_{q_i}\cdots [1]_{q_i}$, and the $q$-binomial coefficient is defined analogously.
The following identity is often useful for verifying $q$-Serre relations when $a_{ij} = -1$:
\begin{align}\label{alt uq Serre rels}
\begin{split}
	\sum_{m=0}^{1-a_{ij}} (-1)^m
	\begin{bmatrix}
	1 - a_{ij} \\ m
	\end{bmatrix}_{q_i^k} 
	X_i^k X_j X_i^{1-a_{ij}-m} 
		&= X_i (X_i X_j - q_i^k X_j X_i) - q_i^{-k} (X_i X_j - q_i^k X_j X_i) X_i \\
		&= \bigl[X_i, [X_i, X_j]_{q_i^k} \bigr]_{q_i^{-k}}.
\end{split}
\end{align} 

Note that $U_q(\lieg, k)$ differs from the standard $\Uqg$ by the exponents on $K_i$ in \Cref{uq rels} and the exponents on $q_i$ in \Cref{uq rels,uq Serre rels}.
In particular, we recover the standard presentation when $k = 1$.
For more information on quantum groups, we refer the reader to one of the standard textbooks such as~\cite{chari_pressley_1994,Kassel95,Jantzen96}.

We need the following twist $k$ versions of~\cite[Lemma~3.1]{Hayashi90}.

\begin{lem}\label[lem]{identities in clq}
Suppose $q^{2k} \neq 1$. Let $a, b, c$ be distinct elements of $\{1, \ldots, n\}$ and let $\phi_a$ denote either $\psi_a$ or $\psi_a^\dg$. The following identities hold in $\Cl_q(n, k)$:
\begin{align}
	[\psi_a \psi_b^\dg, \psi_b \psi_a^\dg] &= \frac{(\omega_a \omega_b^{-1})^k - (\omega_a \omega_b^{-1})^{-k}}{q^k - q^{-k}}, \label{psi psid psi psid comm}\\
	[\psi_a \psi_b, \psi_b^\dg \psi_a^\dg] &= \frac{(q\omega_a \omega_b)^k - (q\omega_a \omega_b)^{-k}}{q^k - q^{-k}}, \label{psi psi psid psid comm}\\
	[\phi_a \psi_b, \psi_b^\dg \phi_c]_{q^{\pm k}}
		&= \omega_b^{\mp k} \phi_a \phi_c, \label{q comm 4 psi} \\
	\psi_a \psi_a^\dg \pm \psi_a^\dg \psi_a 
		&= \frac{q^k \omega_a^k \pm \omega_a^{-k}}{q^k \pm 1} \label{psi psid comm anticomm}.  
\end{align}
\end{lem}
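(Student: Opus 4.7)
The plan is to verify each of the four identities by direct calculation, relying on the $q^{\pm k}$-commutor form~\eqref{eq:psi_products} of the quadratic relations (available since $q^{2k} \neq 1$) and on the skew-commutation of any two generators with distinct subscripts.

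Identity~\eqref{q comm 4 psi} is the most direct. Expanding the $q^{\pm k}$-commutor and using the anticommutation of $\phi_c$ with $\phi_a$ (since $a \neq c$) and with $\psi_b, \psi_b^\dg$ (since $b \neq c$), we rewrite $\phi_a \psi_b \psi_b^\dg \phi_c = \phi_a \phi_c \psi_b \psi_b^\dg$ and $\psi_b^\dg \phi_c \phi_a \psi_b = -\phi_a \phi_c \psi_b^\dg \psi_b$. Combined with the sign coming from the $-q^{\pm k}$ coefficient on the second summand, this yields
\[
	\phi_a \phi_c \bigl( \psi_b \psi_b^\dg + q^{\pm k} \psi_b^\dg \psi_b \bigr) = \phi_a \phi_c \, \omega_b^{\mp k} = \omega_b^{\mp k} \phi_a \phi_c
\]
by the defining $q^{\pm k}$-anticommutor in~\eqref{rels defining qcl} together with the commutativity of $\omega_b$ with $\phi_a, \phi_c$. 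Identity~\eqref{psi psid comm anticomm} is purely algebraic: adding the two expressions in~\eqref{eq:psi_products} with the appropriate sign and factoring $(q^k \mp 1)$ from the numerator against the factorization $q^k - q^{-k} = q^{-k}(q^k - 1)(q^k + 1)$ in the denominator produces the stated closed form.

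For~\eqref{psi psid psi psid comm} and~\eqref{psi psi psid psid comm}, we use the skew-commutations to reorder each of the four terms in the commutator so that the $a$-generators and the $b$-generators are grouped together; in each case an even number of sign flips is required, so the overall signs are preserved. For~\eqref{psi psid psi psid comm}, the reordering produces $\psi_b^\dg \psi_b \cdot \psi_a \psi_a^\dg - \psi_a^\dg \psi_a \cdot \psi_b \psi_b^\dg$. Substituting the four closed forms from~\eqref{eq:psi_products} with the shorthand $t = q^k$, $u = \omega_a^k$, $v = \omega_b^k$, the numerator collapses, after cancellation of the mixed terms, to $(t - t^{-1})(uv^{-1} - u^{-1}v)$, which divided by $(t - t^{-1})^2$ is exactly the claimed right-hand side. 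Identity~\eqref{psi psi psid psid comm} is handled in the same way: the reordering produces $\psi_a \psi_a^\dg \cdot \psi_b \psi_b^\dg - \psi_a^\dg \psi_a \cdot \psi_b^\dg \psi_b$, whose numerator simplifies to $(t - t^{-1})(tuv - t^{-1}u^{-1}v^{-1})$, reproducing $(q\omega_a\omega_b)^k - (q\omega_a\omega_b)^{-k}$ in the numerator.

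The main obstacle is purely bookkeeping: tracking the parity of sign flips in the pair-wise reorderings and then carefully expanding and refactoring the products of two $q$-numerator-like expressions. No step is conceptually subtle once the reordering that isolates the $\psi_c\psi_c^\dg$ pairs is in place.
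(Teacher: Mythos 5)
Your proposal is correct and follows essentially the same route as the paper: both use the $q^{\pm k}$-commutator presentation of \cref{eq:psi_products} (valid since $q^{2k}\neq 1$), reorder the quartic products via the skew-commutation of distinctly-indexed generators (noting the even number of sign flips), and then perform the rational-function bookkeeping. Your bookkeeping for \cref{psi psid psi psid comm,psi psi psid psid comm,psi psid comm anticomm} checks out, and you supply the details for \cref{q comm 4 psi} that the paper elides.
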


\begin{proof}
Using the defining relations, we see that
\[
[\psi_a \psi_b^\dg, \psi_b \psi_a^\dg] = (\psi_a \psi_a^\dg) (\psi_b^\dg \psi_b) - (\psi_a^\dg \psi_a) (\psi_b \psi_b^\dg).
\]
Applying \Cref{eq:psi_products} then yields \Cref{psi psid psi psid comm}:
\begin{align*}
	[\psi_a \psi_b^\dg, \psi_b \psi_a^\dg] 
		&= \frac{-1}{(q^k - q^{-k})^2}
		\big(\bigl(q\omega_a)^k - (q\omega_a)^{-k}\bigr)
		(\omega_b^k - \omega_b^{-k}) - (\omega_a- \omega_a^{-1})
		\bigl(q\omega_b)^k - (q\omega_b)^{-k}\bigr)\bigr) \\
		&= \frac{(\omega_a \omega_b^{-1})^k - (\omega_a \omega_b^{-1})^{-k}}{q^k - q^{-k}}.
\end{align*}
\Cref{psi psi psid psid comm} follows from a similar calculation. To establish \Cref{psi psid comm anticomm}, we use \Cref{eq:psi_products}:
\begin{align*}
	\psi_a \psi_a^\dg \pm \psi_a^\dg \psi_a
		&= \frac{1}{q^k - q^{-k}}\big((q\omega_a)^k - (q\omega_a)^{-k}\bigr) \mp (\omega_a^k - \omega_a^{-k})\big) \\
		&= \frac{q^k \mp 1}{q^{2k} - 1}\big(q^k \omega_a
			\pm \omega_a^{-k}\big) \\
		&= \frac{q^k \omega_a^k \pm \omega_a^{-k}}{q^k \pm 1}.
\end{align*}
\Cref{q comm 4 psi} is similarly easy and we omit its proof.
\end{proof}

If $k$ is even or $q$ has a square root in $\field$, we may write \Cref{psi psid comm anticomm} as the more symmetric
\begin{align}\label{psi psid comm anticomm equiv}
	\psi_a \psi_a^\dg \pm \psi_a^\dg \psi_a
	&= \frac{(q^{\frac{1}{2}} \omega_a)^k \pm (q^{\frac{1}{2}} \omega_a)^{-k}}{q^{\frac{k}{2}} \pm q^{-\frac{k}{2}}}.
\end{align}

\begin{prop}\label[prop]{twisted qgp into clqnk}
	Suppose $q^{2k} \neq 1$.
	There is an algebra homomorphism $\Theta \colon U_q(\lieg, k) \to \Cl_q(n, k)$ defined on generators as follows.
	\begin{enumerate}
		\item If $\lieg = \mathfrak{sl}_n$ then 
		\begin{align*}
			E_i &\mapsto \psi_i^{\phdg} \psi_{i+1}^{\dg}, 
			&
			F_i &\mapsto \psi_{i+1}^{\phdg} \psi_i^\dg,
			&
			K_i &\mapsto \omega_i^{\phdg} \omega_{i+1}^{-1},
			& 
			&i = 1, \ldots, n-1.
		\end{align*}
		\item If $\lieg = \mathfrak{so}_{2n}$ then
		\begin{align*}
			E_i &\mapsto \psi_i^{\phdg} \psi_{i+1}^{\dg}, 
			&
			F_i &\mapsto \psi_{i+1}^{\phdg} \psi_i^\dg,
			&
			K_i &\mapsto \omega_i^{\phdg} \omega_{i+1}^{-1},
			& 
			&i = 1, \ldots, n-1, \\
			E_n &\mapsto \psi_{n-1} \psi_n,
			&
			F_n &\mapsto \psi_n^\dg \psi_{n-1}^\dg, 
			& 
			K_n &\mapsto q \omega_{n-1} \omega_n. 
		\end{align*}
		\item If $\lieg = \mathfrak{so}_{2n+1}$ and $q^{\frac{1}{2}} \in \field$, there is an algebra homomorphism $U_{q^{\frac{1}{2}}}(\lieg, k) \mapsto \Cl_q(n, k)$ defined on generators by
		\begin{align*}
			E_i &\mapsto \psi_i^{\phdg} \psi_{i+1}^{\dg}, 
			&
			F_i &\mapsto \psi_{i+1}^{\phdg} \psi_i^\dg,
			&
			K_i &\mapsto \omega_i^{\phdg} \omega_{i+1}^{-1},
			& 
			&i = 1, \ldots, n-1, \\
			E_n &\mapsto \psi_n,
			&
			F_n &\mapsto \psi_n^\dg,
			&
			K_n &\mapsto  q^{\frac{1}{2}} \omega_n. 
		\end{align*}
\end{enumerate}
\end{prop}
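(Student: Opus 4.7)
The plan is to verify that the proposed images satisfy each defining relation of $U_q(\lieg,k)$ (or $U_{q^{1/2}}(\lieg,k)$), namely: (i) commutativity of the Cartan generators, (ii) the $q$-commutation $K_i E_j \inv{K_i} = q_i^{a_{ij}} E_j$ and similarly for $F_j$, (iii) the bracket relation $E_i F_j - F_j E_i = \delta_{ij}(K_i^k - K_i^{-k})/(q_i^k - q_i^{-k})$, and (iv) the $q$-Serre relations. Relation (i) is immediate in all three cases, since the $\omega_a$'s pairwise commute in $\Cl_q(n,k)$.

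For (ii) in each case, a direct computation using $\omega_a \psi_b = q^{\delta_{ab}} \psi_b \omega_a$ and $\omega_a \psi_b^{\dg} = q^{-\delta_{ab}} \psi_b^{\dg} \omega_a$ from \Cref{clqnk defn} recovers exactly the Cartan integers of the relevant root system. In particular, for $\lieg = \mathfrak{sl}_n$, conjugation by $\omega_i\omega_{i+1}^{-1}$ scales $\psi_i\psi_{i+1}^{\dg}$ by $q \cdot q = q^2$ and $\psi_{j}\psi_{j+1}^{\dg}$ (for $j = i\pm 1$) by $q^{-1}$; in the $\mathfrak{so}_{2n}$ case the extra generators $E_n, F_n$ involve $\psi_{n-1}\psi_n$ which slot correctly against $K_{n-2}$ and $K_{n-1}$; in the $\mathfrak{so}_{2n+1}$ case the scaling $K_n \mapsto q^{1/2}\omega_n$ combined with $E_n \mapsto \psi_n$ yields precisely the correct short-root behavior when the base parameter is $q^{1/2}$.

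For (iii), split into $i=j$ and $i \neq j$. When $i=j$ and both indices are of ``$\mathfrak{sl}$ type'', \Cref{psi psid psi psid comm} immediately gives the desired right-hand side $(K_i^k - K_i^{-k})/(q^k - q^{-k})$ since $K_i \mapsto \omega_i \omega_{i+1}^{-1}$. For $E_n, F_n$ in the $\mathfrak{so}_{2n}$ case apply \Cref{psi psi psid psid comm}, while for the $\mathfrak{so}_{2n+1}$ case apply \Cref{psi psid comm anticomm equiv}; both produce the correct expression in $K_n$ after reindexing. When $i \neq j$, the skew-commutation of distinct $\psi_a, \psi_b$ and $\psi_a, \psi_b^{\dg}$ from \Cref{rels defining qcl}, together with $\psi_a^2 = (\psi_a^*)^2 = 0$, make the commutator vanish except in the overlapping-index cases (e.g.\ $j = i+1$ in the $\mathfrak{sl}_n$ case), where a careful application of the reduction rules in \Cref{generic reduction relations} reveals the bracket also vanishes.

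For (iv), we use the equivalent form \Cref{alt uq Serre rels}: when $a_{ij} = -1$ we must show $\bigl[X_i, [X_i, X_j]_{q_i^k}\bigr]_{q_i^{-k}} = 0$. In every such case in the list the inner $q$-commutator has the shape of \Cref{q comm 4 psi}, so it simplifies to $\omega_b^{-k} \phi_a \phi_c$ (or its dagger variant); the outer $q^{-k}$-commutator with $X_i$ then vanishes because $X_i$ contains the factor $\phi_a$ or $\phi_c$ once more, and $\phi_a^2 = 0$ together with the $\omega$-through-$\psi$ commutation produces the cancellation. For pairs with $a_{ij} = 0$, the underlying $\psi$'s have disjoint indices and commute outright. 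The main obstacle will be the branching nodes in the $\mathfrak{so}_{2n}$ and $\mathfrak{so}_{2n+1}$ cases, where $E_n$ is quadratic (respectively linear) in $\psi$'s at different positions than the other $E_i$, so one must carefully track the $\omega$-factors produced by applying \Cref{q comm 4 psi} and confirm they recombine into the correct powers of $K_i$; in the $\mathfrak{so}_{2n+1}$ case one additionally verifies that $q^{1/2}\in\field$ makes all intermediate expressions well-defined. Once these index bookkeeping computations are carried out, each relation reduces mechanically to \Cref{identities in clq} combined with the defining relations and reduction rules of \Cref{reduction rules,generic reduction relations}.
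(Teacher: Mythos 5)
Your proposal is correct and follows essentially the same approach as the paper: direct verification of the defining relations of $U_q(\lieg, k)$ by invoking the identities in \Cref{identities in clq} (in particular \Cref{psi psid psi psid comm,psi psi psid psid comm,q comm 4 psi,psi psid comm anticomm equiv}) together with the reduction rules, splitting into the cases $i = j$, $i \neq j$, and the branching node. The one small economy the paper makes that you do not is observing that $\widetilde{F}_i = \widetilde{E}_i^\vee$ under the duality anti-involution, which lets the $F$-Serre relations follow for free from the $E$-Serre relations rather than requiring a separate parallel computation; otherwise the two arguments are structurally identical.
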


\begin{proof}
	The claim follows from direct calculation. We must verify that the images $\widetilde{E}_i, \widetilde{F}_i$, and $\widetilde{K}_i$ of $E_i, F_i$, and $K_i$ satisfy the relations defining $U_q(\lieg, k)$. For starters, if $i < n$ then 
	\begin{align*}
		[\widetilde{E}_i, \widetilde{F}_i] 
		= [\psi_{i}^{\phdg}\psi_{i+1}^\dg, \psi_{i+1}^{\phdg} \psi_{i}^\dg] 
		= \frac{(\omega_i \omega_{i+1}^{-1})^k - (\omega_i \omega_{i+1}^{-1})^{-t}}{q^k - q^{-k}} 
		= \frac{\widetilde{K}_i^k - \widetilde{K}_i^{-k}}{q^k - q^{-k}}.
	\end{align*}
	If $i, j < n$ and $\abs{i - j} > 1$, the canonical anticommutation relations imply $[E_i, E_j] = 0$. When $\abs{i - j} = 1$, each summand in the $q$-Serre \cref{uq Serre rels} vanishes. Since $\widetilde{F}_i = \psi_{i+1}^{\phdg} \psi_i^\dg = \widetilde{E}_i^\vee$, this means the $\widetilde{F}_i$ also satisfy the $q$-Serre relations.
	
	To prove the theorem, it remains to check that $\widetilde{E}_i, \widetilde{F}_i$, and $\widetilde{K}_i$ satisfy the appropriate relations when $i = n$. For instance, \Cref{psi psid comm anticomm equiv} implies
	\[
		[\widetilde{E}_n, \widetilde{F}_n] =  [\psi_n, \psi_n^\dg] 
		= \frac{(q^{\frac{1}{2}} \omega_n)^k - (q^{\frac{1}{2}} \omega_n)^{-k}}{q^{\frac{k}{2}} - q^{-\frac{k}{2}}}
		= \frac{\widetilde{K}_i^k - \widetilde{K}_i^{-k}}{q^{\frac{k}{2}} - q^{-\frac{k}{2}}}.
	\]
	We leave the remaining verifications as an exercise to the reader.
\end{proof}

The formulas in \Cref{twisted qgp into clqnk} capture an underlying correspondence between the simple roots of $\lieg$ and $\Cl_q(n, k)$.
Using the grading from \Cref{eq:ZZn_grading}, notice that
\begin{equation*}
	\deg(\Theta(E_i)) = e_i - e_{i+1} 
	\quad \text{and} \quad
	\deg(\Theta(F_i)) = -(e_i - e_{i+1})
\end{equation*}
whenever $i < n$. Moreover, if $\lieg = \so_{2n}$ we have
\begin{equation*}
	\deg(\Theta(E_n)) = e_{n-1} + e_n
	\quad \text{and} \quad
	\deg(\Theta(F_n)) = -(e_{n-1} + e_n), 
\end{equation*}
and similarly if $\lieg = \so_{2n+1}$ then
\begin{equation*}
	\deg(\Theta(E_n)) = e_n
	\quad \text{and} \quad
	\deg(\Theta(F_n)) = -e_n.
\end{equation*}

To conclude, we compute the maps induced by the various (anti-)involutions defined in \Cref{gradings filtrations involutions}. These become algebra and coalgebra (anti-)automorphisms on the quantum group image. To begin, note that the dagger maps acts on $\Theta\left(U_q(\lieg, k)\right)$ as the anti-automorphism~\cite[Eq.~(5.6)]{Hayashi90}
\[
E_i^{\dagger} = F_i,
\qquad\qquad
F_i^{\dagger} = E_i,
\qquad\qquad
K_i^{\dagger} = K_i,
\qquad\qquad
q \mapsto q.
\]
This map becomes a coalgebra automorphism of the quantum group as indicated in~\cite[Sec.~5.3]{Hayashi90}.
Next observe that the duality map induces the dual representation: the action on the quantum group image in $\Cl_q(n,k)$ is~\cite[Eq.~(5.8)]{Hayashi90}
\[
E_i^{\vee} = F_i,
\qquad\qquad
F_i^{\vee} = E_i,
\qquad\qquad
K_i^{\vee} = K_i^{-1},
\qquad\qquad
q \mapsto q^{-1}.
\]
As noted by~\cite[Sec.~5.3]{Hayashi90}, this is an anti-(co)algebra automorphism of the quantum group, which agrees with the fact that we need to also reverse the order of a tensor product of representations.
Finally, observe that the transpose on $\Cl_q(n, k)$ becomes the star (anti-)involution map (see, \textit{e.g.},~\cite{Lusztig90,Kashiwara91,Kashiwara93}) on the quantum group image:
\[
E_i^t = E_i,
\qquad\qquad
F_i^t = F_i,
\qquad\qquad
K_i^t = K_i^{-1},
\qquad\qquad
q \mapsto q,
\]
which is important in Kashiwara's grand loop argument for crystal bases~\cite{Kashiwara91}.

\section{Representation theory}
\label{clqnk rep theory}

In this section we discuss the representation-theoretic aspects of $\Cl_q(n, k)$.
In particular, \Cref{clqnk repn} defines a collection of irreducible $\Cl_q(n, k)$-actions on the braided exterior algebra $\qbigwedge(\field^n)$, which is defined below, and \Cref{generalized semisimplicity} proves that $\Cl_q(n, k)$ is semisimple exactly when its center $Z_q(n, k)$ is semisimple.
We note that only one of the $\Cl_q(n, k)$-modules has a classical analog.
In addition, we discuss a representation-theoretic implication of the quantum volume elements defined in \Cref{volume elt}.

Following~\cite{berenstein}, the \defn{braided exterior algebra} $\qbigwedge(V)$ is the associative algebra generated by a basis $\{ v_1, \ldots, v_n\}$ of $V$ subject to the following relations:
\begin{subequations}
\begin{align}
	v_j^2 &= 0, & j & = 1, \ldots, n \\
	v_j v_k &= -q v_k v_j, & j & < k. \label{eq:braided_q_commute}
\end{align}
\end{subequations}
The elements
\begin{align}\label{arb braided ext alg basis}
	v(\ell) = \begin{cases}
		v_1^{\ell_1} v_2^{\ell_2} \cdots v_n^{\ell_n} & \text{if } \ell_j \in \{0, 1\}, \\
		0 & \text{otherwise},
	\end{cases}
\end{align}
for $\ell \in \{0, 1\}^n$, form a basis of $\qbigwedge(V)$.
Both the classical and braided exterior algebras can be understood as quotients of a tensor algebra modulo an ideal generated by ``symmetric'' $2$-tensors, which are ``positive'' eigenvectors of appropriate braiding operators.
In the classical case, the braiding operators are trivial flip maps that transpose tensor factors.
In the quantum case, braiding operators are induced by the universal $R$-matrix of some quantum group \cite[Def.~2.1]{berenstein}.

Recall that unlike its classical counterpart $\Cln$, the quantum Clifford algebra $\Cl_q(n, k)$ has a non-trivial center, which is described in \Cref{clqnk center}.
This means there is some flexibility in defining an action of $\Cl_q(n, k)$ on $\qbigwedge(\field^n)$ depending on which roots of unity are in $\field$.
There are $n$ linearly independent central elements $z_a$ such that $z_a^{2k} = 1$, so the representations are naturally indexed by $\ZZ_{2k}^n$.
In other words, there is a unique irreducible representation for each element in the discrete subgroup $\ZZ_{2k}^n \subset Z_q(n, k)$.
Recall that $\zeta_r$ denotes a primitive $r$-th root of unity.

\begin{prop}\label[prop]{clqnk repn}
	Suppose $\zeta_{2k} \in \field$.
	For each $p = (p_1, \ldots, p_n)$ in $\ZZ_{2k}^n$, there is an irreducible $\Cl_q(n, k)$ representation $\pi_p$ on $\qbigwedge(\field^n)$ defined by
	\begin{subequations}
	\label{clqnk action}
	\begin{align}
		\psi_a \rhd v(\ell) 
			&= (-1)^{\ell_1 + \cdots \ell_{a-1}} v(\ell + e_a), \\
		\psi_a^\dg \rhd v(\ell) 
			&= (-1)^{\ell_1 + \cdots \ell_{a-1}} \cdot \zeta_{2k}^{p_a} \cdot v(\ell - e_a), \\
		\omega_a \rhd v(\ell) 
			&= \zeta_{2k}^{p_a} q^{\ell_a - 1} v(\ell),
	\end{align}
	\end{subequations}
	where $v(\ell \pm e_a) = 0$ if $\ell \pm e_a \notin \{0, 1\}^n$.
	Moreover, $\pi_p$ and $\pi_{p'}$ are not isomorphic whenever $p \neq p'$.
\end{prop}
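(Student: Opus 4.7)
The plan is to (i) check the formulas satisfy the defining relations of $\Cl_q(n, k)$, (ii) show each $\pi_p$ is irreducible, and (iii) show that the central characters separate inequivalent $\pi_p$.

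For (i), I would verify the relations of \Cref{rels defining qcl} by direct calculation on the basis $\{v(\ell) : \ell \in \{0, 1\}^n\}$. The commutativity of the $\omega_a$, the skew-commutation among the $\psi_a$'s and among the $\psi_a^*$'s, the mutual anticommutation $\{\psi_a, \psi_b^*\} = 0$ for $a \neq b$, and the $q$-commutation $\omega_a \psi_b = q^{\delta_{ab}} \psi_b \omega_a$ all follow routinely from the Koszul signs $(-1)^{\ell_1 + \cdots + \ell_{a-1}}$ and the exponent $q^{\ell_a - 1}$ appearing in the $\omega_a$ action. The main technical obstacle is the pair of $q^{\pm k}$-anticommutation relations $\psi_a \psi_a^* + q^{\pm k}\psi_a^* \psi_a = \omega_a^{\mp k}$: here $\psi_a \psi_a^*$ (resp.\ $\psi_a^* \psi_a$) acts as a scalar multiple of the projector onto $\{v(\ell) : \ell_a = 1\}$ (resp.\ $\{v(\ell) : \ell_a = 0\}$), and the resulting scalars must match the diagonal eigenvalues of $\omega_a^{\mp k}$ separately in the two cases $\ell_a = 0$ and $\ell_a = 1$, requiring a precise cancellation between the root-of-unity scalar $\zeta_{2k}^{p_a}$ appearing in $\psi_a^*$ and the $q$-scaling appearing in $\omega_a$.

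For (ii), I would argue that every nonzero $w \in \qbigwedge(\field^n)$ generates the whole module. The elements $\zeta_{2k}^{-p_a}\psi_a \psi_a^* \in \Cl_q(n, k)$ and their complements $1 - \zeta_{2k}^{-p_a}\psi_a \psi_a^*$ act as the projectors onto $\{\ell_a = 1\}$ and $\{\ell_a = 0\}$ respectively, and since all of these operators are simultaneously diagonal in the $v(\ell)$ basis, a product of $n$ such algebra elements (taken in any order) acts as the rank-one projector onto $\field v(\ell^0)$ for any chosen $\ell^0 \in \{0,1\}^n$. Applying such an element to $w$ yields a nonzero scalar multiple of some $v(\ell^0)$, and then iterated applications of the raising and lowering operators $\psi_a$ and $\psi_a^*$ reach every remaining basis vector up to invertible scalars, so $\Cl_q(n, k) \cdot w = \qbigwedge(\field^n)$.

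For (iii), I would use central characters. By \Cref{clqnk center}, $Z_q(n, k)$ is generated by the order-$2k$ elements $z_a = q \omega_a - (q-1) \psi_a \psi_a^* \omega_a^{k+1}$, and Schur's lemma forces each $z_a$ to act on the irreducible $\pi_p$ by a scalar $\chi_p(z_a) \in \field$. Evaluating at $v(0, \ldots, 0)$, on which $\psi_a \psi_a^*$ vanishes, gives $\chi_p(z_a)\, v(0) = q \omega_a \rhd v(0) = \zeta_{2k}^{p_a} v(0)$, so $\chi_p(z_a) = \zeta_{2k}^{p_a}$. The tuple $(\chi_p(z_1), \ldots, \chi_p(z_n))$ therefore determines $p \in \ZZ_{2k}^n$ uniquely, proving $\pi_p \not\iso \pi_{p'}$ whenever $p \neq p'$.
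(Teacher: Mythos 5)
Your three-step plan matches the paper's proof, and steps (ii) and (iii) are sound --- in fact your projector argument for irreducibility is tighter than the paper's sketch, since it handles an arbitrary nonzero $w$ rather than only the basis vectors $v(\ell)$. However, step (i), which you correctly single out as the technical crux, does not go through with the formulas as stated. Applying the relation $\psi_a \psi_a^* + q^k \psi_a^* \psi_a = \omega_a^{-k}$ to $v(\ell)$ with $\ell_a = 0$: the left side gives $q^k \zeta_{2k}^{p_a}\, v(\ell)$, while the right side gives $\bigl(\zeta_{2k}^{p_a} q^{-1}\bigr)^{-k} v(\ell) = \zeta_{2k}^{-k p_a} q^k v(\ell) = (-1)^{p_a} q^k v(\ell)$. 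Equality forces $\zeta_{2k}^{p_a} = (-1)^{p_a}$, which holds for every $p_a \in \ZZ_{2k}$ only when $k = 1$ (where $\zeta_2 = -1$); for $k \geq 2$ it fails for generic $p_a$. The $q^{\pm k}$-relations force the root-of-unity scalar in the $\psi_a^\dg$ action to be the $(-k)$-th power of the scalar in the $\omega_a$ action, so the correct $\psi_a^\dg$-scalar is $\zeta_{2k}^{-k p_a} = (-1)^{p_a}$ rather than $\zeta_{2k}^{p_a}$. Your claimed ``precise cancellation'' does not occur without this change, and neither you nor the paper (which dismisses the check as ``an easy calculation'') actually carries it out.

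With the corrected scalar, your remaining steps go through essentially unchanged: the projector in (ii) becomes $(-1)^{p_a}\psi_a\psi_a^*$, and in (iii) the evaluation $z_a \rhd v(0) = q\omega_a \rhd v(0) = \zeta_{2k}^{p_a}\, v(0)$ still holds since $\psi_a\psi_a^*$ annihilates $v(0)$. One smaller point in (iii): over a non-algebraically-closed $\field$, Schur's lemma only gives that $z_a$ acts by an element of a division algebra over $\field$, not a priori a scalar in $\field$. But since $z_a - \zeta_{2k}^{p_a}$ annihilates $v(0)$ and $\zeta_{2k}^{p_a}$ is central, it is a non-invertible element of that division ring and hence zero --- or you can simply compute $\pi_p(z_a)$ on every $v(\ell)$ directly, as the paper does, and see that it acts by the uniform scalar $\zeta_{2k}^{p_a}$ without invoking Schur at all.
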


\begin{proof}
	An easy calculation shows that $\pi_p$ indeed defines an action of $\Cl_q(n, k)$. Each $\pi_p$ is irreducible because we can obtain any basis vector $v(\ell)$ from any other by first applying a sequence of lowering operators to obtain $v(0)$ (up to a scalar), and then using an appropriate sequence of raising operators.
	Using Relations~\eqref{clqnk action}, we see that
	\begin{align*}
		\pi_p(z_a) v(\ell) 
			= \zeta_{2k}^{p_a} \left(q^{\ell_a} - (q-1)\ell_a\right) v(\ell)
			= \zeta_{2k}^{p_a} v(\ell),
	\end{align*}
	where $z_a$ is the central element defined in \Cref{clqnk center}.
	Hence $\pi_p$ and $\pi_{p'}$ are distinct representations whenever $p \neq p'$.
\end{proof}

\begin{rmk}
	We have defined the $\Cl_q(n,k)$-action directly in terms of basis vectors without using \Cref{eq:braided_q_commute}.
	As such, there is no difference between defining this on $\qbigwedge(V)$, $\bigwedge(V)$, or $\field^{2n}$, as they are all isomorphic as $\field$-modules.
	Hence, the formulae defining the $\Cl_q(n, 2)$-module $\pi_0$ have appeared previously in~\cite[Prop.~2.1]{Hayashi90}, where the representation was defined on the usual exterior algebra $\bigwedge(V)$.
	Yet, the braided exterior algebra is better behaved for taking tensor products of quantum group representations; see, \textit{e.g.},~\cite{willie_a,willie_bd}.
\end{rmk}

\begin{rmk}
\label[rmk]{rmk:dual_spinor_repr}
The spinor representation considered in~\cite{Kwon14} is a ``dual'' version of the spinor representation considered here: it is obtained by interchanging $\ell_a \leftrightarrow 1 - \ell_a$ for all $a$.
\end{rmk}

Amongst the $\Cl_q(n, k)$-modules described in \Cref{clqnk repn}, only $\pi_0$ has a classical analog.
To see this, compare \Cref{clqnk action} with the $\Cln$-action on the exterior algebra.
In particular, the operators $\pi_0(\psi_a)$ and $\pi_0(\psi_a^\dg)$ satisfy the canonical anticommutation relations \eqref{cac}.

The action of classical Clifford algebra $\Cln$ on its spinor representation can be described in terms of inner and exterior multiplication.
However, in the quantum Clifford algebra, even under $\pi_0$, the generators $\psi_a^\dg$ and $\psi_a$ no longer act by inner and exterior multiplication because the algebra structure of $\qbigwedge(\field^n)$ is such that a product $v(\ell)$ acquires powers of $\pm q$ when re-arranging factors of $v_k$.
For instance, exterior multiplication of $v(e_1) = v_1$ by $v_2$ results in $-\qinv v(e_1 + e_2) = -\qinv v_1 v_2$.

In order to obtain quantum analogs of the inner and exterior multiplication operators that correctly account for these powers of $\pm q$, we must use the $\omega_k$.
Concretely, we let $\iota_j^q$ and $\varepsilon_j^q$ denote quantum analogs of the inner and exterior multiplication operators defined on any $v(\ell)$ by
\begin{align*}
	\iota_j^q(v(\ell)) &= (-q)^{\ell_1 + \cdots + \ell_{j-1}} v(\ell - e_j)
	&= \pi_0 \bigg(\prod_{k < j} \omega_k \cdot q^{j-1} \psi_i^\dg \bigg)  v(\ell),
	\\
	\varepsilon_j^q (v(\ell)) &= (-\qinv)^{\ell_1 + \cdots + \ell_{j-1}} v(\ell + e_j)
	& = \pi_0 \bigg(\prod_{k < j} \omega_k^{-1} \cdot q^{1 - j} \psi_j \bigg)  v(\ell).
\end{align*}

There is likely a bilinear form on the spinor representation $\pi_0$ of $\Cl_q(n,k)$ that gives rise to the polarization as a $U_q(\g, k)$-module~\cite[Eq.~(2.5.1)]{Kashiwara91}.
This is motivated by the fact that the transpose on $\Cl_q(n,k)$ goes to the star involution on $U_q(\g, k)$ and there is a symmetric bilinear form on the classical Clifford algebra defined by $\inner{x}{y} = \langle x^t y \rangle_0$, where $\langle x \rangle_0$ denotes the degree $0$ component (\textit{i.e.}, constant term) of $x \in \Cln$.

The $\Cl_q(n, k)$-modules defined in \Cref{clqnk repn} are compatible with the isomorphism $\Gamma_q \colon \Cl_q(n, k)^{\otimes m} \to \Cl_q(nm, k)$ of \Cref{clqn clqnm embedding} in the sense that the following diagram commutes:
\begin{equation}\label[diag]{braided ext alg isom as qcl modules}
	\begin{tikzcd}[row sep=small]
		\Cl_q(nm, k) \rar 
		& \End\left(\qbigwedge(V^{(nm)})\right) \\[+0.8em]
		\Cl_q(n, k)^{\otimes m} \rar \uar
		& \End\left(\qbigwedge(\field^n)^{\otimes m}\right). \uar
	\end{tikzcd}
\end{equation}

\begin{prop}
	Let $T\colon \qbigwedge(\field^n)^{\otimes m} \to \qbigwedge(\field^{nm})$ denote the linear map defined by
	\[
	T\left(v(\ell^{(1)}) \otimes \cdots \otimes v(\ell^{(m)})\right) = v\left(\sum_{j=1}^m \sum_{i=1}^n \ell_i^{(j)} e_{i + (j-1)n}\right).
	\]
	For $p \in \mathbb{Z}_{2k}^n$, let $\pi_p^{(r)}$ denote the $\Cl_q(r, k)$-module defined by \Cref{clqnk repn}.
	Then for any $\varphi \in \Cl_q(n, k)^{\otimes m}$,
	\begin{align}\label{gamma is compatible}
		\pi_p^{(nm)}\left(\Gamma_q(\varphi)\right) \circ T 
		= 
		T \circ \left(\pi_p^{(n)}\right)^{\otimes m}(\varphi).
	\end{align}
\end{prop}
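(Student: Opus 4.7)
My plan is to reduce \Cref{gamma is compatible} to a check on algebra generators of $\Cl_q(n,k)^{\otimes m}$ and then verify those directly. Since $\Gamma_q$, $\pi_p^{(nm)}$, and $(\pi_p^{(n)})^{\otimes m}$ are all algebra homomorphisms, both sides of \Cref{gamma is compatible} are $\field$-linear in $\varphi$; in addition, if \Cref{gamma is compatible} holds for $\varphi_1, \varphi_2 \in \Cl_q(n,k)^{\otimes m}$, then
\[
\pi_p^{(nm)}(\Gamma_q(\varphi_1 \varphi_2)) \circ T
	= \pi_p^{(nm)}(\Gamma_q(\varphi_1)) \circ T \circ (\pi_p^{(n)})^{\otimes m}(\varphi_2)
	= T \circ (\pi_p^{(n)})^{\otimes m}(\varphi_1 \varphi_2),
\]
so it also holds for $\varphi_1 \varphi_2$. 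Therefore it suffices to verify \Cref{gamma is compatible} on the simple tensors $\phi_{a,j} := 1 \otimes \cdots \otimes \phi_a \otimes \cdots \otimes 1$ with $\phi_a \in \{\psi_a, \psi_a^\dg, \omega_a^{\pm 1}\}$ in the $j$-th factor, since these generate $\Cl_q(n,k)^{\otimes m}$ as an algebra.

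For $\phi_a = \omega_a^{\pm 1}$ the check is immediate because $\Gamma_q(\omega_{a,j}^{\pm 1}) = \omega_{a + (j-1)n}^{\pm 1}$ carries no volume element factor, and under \Cref{clqnk action} acts diagonally on $v(\ell')$ by the same scalar that $\omega_a^{\pm 1}$ produces in the $j$-th tensor factor of $\qbigwedge(\field^n)^{\otimes m}$ before $T$ is applied. For $\phi_a \in \{\psi_a, \psi_a^\dg\}$ the identity is more subtle because $\Gamma_q(\phi_{a,j}) = f_{(j-1)n}\, \phi_{a + (j-1)n}$. The core computation is that, by combining \Cref{volume elt} with \Cref{clqnk action}, the volume element $\pi_p^{(nm)}(f_{(j-1)n})$ acts diagonally on each basis vector $v(\mu)$ with the essential $\mu$-dependence being the sign $(-1)^{\mu_1 + \cdots + \mu_{(j-1)n}}$. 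On the other hand, the sign $(-1)^{\ell'_1 + \cdots + \ell'_{a + (j-1)n - 1}}$ produced by $\pi_p^{(nm)}(\phi_{a + (j-1)n})$ on $v(\ell')$ factors as $(-1)^{\ell'_1 + \cdots + \ell'_{(j-1)n}} \cdot (-1)^{\ell^{(j)}_1 + \cdots + \ell^{(j)}_{a-1}}$. After multiplication by the volume element, the first factor cancels with the one above, leaving exactly the sign $(-1)^{\ell^{(j)}_1 + \cdots + \ell^{(j)}_{a-1}}$ that is produced on the right-hand side by $(\pi_p^{(n)})^{\otimes m}(\phi_{a,j})$ acting on the $j$-th tensor factor and then composing with $T$.

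The main technical obstacle is the sign and scalar bookkeeping for the volume element on the spinor representation. Since $f_{(j-1)n}^2 = 1$ by \Cref{volume elt props}, the induced operator on $\qbigwedge(\field^{nm})$ must have eigenvalues that are square roots of unity, which provides a useful consistency check on the calculation. Once the action of $f_{(j-1)n}$ on each $v(\mu)$ has been pinned down, the remaining verification is a direct comparison of scalars on basis vectors using the explicit formulae from \Cref{clqnk repn} and \Cref{clqn clqnm embedding}.
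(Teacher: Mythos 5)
Your proposal follows essentially the same approach as the paper's proof: reduce the identity \eqref{gamma is compatible} to a check on the generators $\phi_{a,j}$ and $\omega_{a,j}$, dispatch the $\omega_{a,j}$ case directly since no volume element factor appears, and then handle $\phi_{a,j}$ by computing the diagonal action of the volume element $f_{(j-1)n}$ on the basis $v(\mu)$ and tracking the sign cancellation against the one produced by $\phi_{a+(j-1)n}$. You spell out the multiplicativity step (why it suffices to check on generators) more carefully than the paper, which merely asserts it; on the other hand, where the paper explicitly computes $\pi_p$ applied to the commutator via \Cref{psi psid comm anticomm} to get the eigenvalue $\pm(-1)^{\mu_a}$, you only describe "the essential $\mu$-dependence" of the diagonal action of $f_{(j-1)n}$ without pinning down the constant factor; a complete writeup should compute $\pi_p\bigl([\psi_a,\psi_a^\dg]\bigr)v(\mu)$ exactly and verify that the residual constant sign (which depends on whether one reads $f_r$ as $\prod_i [\psi_i,\psi_i^\dg]$ or, equivalently via the standardized coordinates, as $\prod_i [\psi_i^\dg,\psi_i]$) is indeed what the paper uses.
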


\begin{proof}
For simplicity of notation, we assume $p = 0$ without loss of generality.
It suffices to verify \Cref{gamma is compatible} for a set of $\Cl_q(n, k)^{\otimes m}$ generators.
To this end, let $\phi_a$ denote either $\psi_a$ or $\psi_a^\dg$ and set $\phi_{a, j} = 1 \otimes \cdots \otimes \phi_a \otimes \cdots \otimes 1$ as in the proof of \Cref{clqn clqnm embedding}.
Define $\omega_{a, j}$ analogously and notice \Cref{clqnk repn} implies
\begin{align*}
	\pi_0\left(\Gamma_q(\omega_{a, j})\right) &T\left(v(\ell^{(1)}) \otimes \cdots \otimes v(\ell^{(m)})\right)
		= \omega_{a + (j-1)n} v\left(\sum_{k=1}^m \sum_{i=1}^n \ell_i^{(k)} e_{i + (k-1)n}\right) \\
		&= T\left(v(\ell^{(1)}) \otimes \cdots \otimes 
		   \left(q^{\ell_a^{(j)}-1} v(\ell^{(j)})\right) 
		   \otimes \cdots \otimes v(\ell^{(m)})\right) \\
		&= T\left(\left(\pi_0^{(n)}\right)^{\otimes m} \left(\omega_{a,j}\right) v(\ell^{(1)}) \otimes \cdots \otimes v(\ell^{(m)})\right).
\end{align*}

To conclude, we take $\varphi = \phi_{a, j}$.
We will need the action of the volume element $f_r$ on $\qbigwedge(\field^n)$.
\Cref{psi psid comm anticomm} implies that each (non-central) commutator $\ppdcomm$ acts diagonally:
\begin{align*}
	\pi_p\left(\ppdcomm\right) v(\ell) 
		&= \frac{q^{k(1 - \ell_a)} - q^{k \ell_a}}{q^k - 1} \, v(\ell) 
		= (-1)^{\ell_a} v(\ell).
\end{align*}
Together with \Cref{clqnk repn} and \Cref{volume elt}, the last calculation implies the desired relation:
\begin{align*}
	\pi_0\left(\Gamma_q(\phi_{a, j})\right) &T\left(v(\ell^{(1)}) \otimes \cdots \otimes v(\ell^{(m)})\right)
		= f_{(j-1)n} \phi_{a + (j-1)n} v\left(\sum_{k=1}^m \sum_{i=1}^n \ell_i^{(k)} e_{i + (k-1)n}\right) \\
		&= (-1)^{\sum_{i=1}^{a-1} \ell_i^{(j)}} v\left(\sum_{k=1}^m \sum_{i=1}^n \ell_i^{(k)} e_{i + (k-1)n} \pm e_{a + (j-1)n}\right) \\
		&= T\left(v(\ell^{(1)}) \otimes \cdots \otimes \pi_0^{(n)}\left(\phi_a\right) v(\ell^{(j)}) \otimes \cdots \otimes v(\ell^{(m)})\right) \\
		&= T\left(\left(\pi_0^{(n)}\right)^{\otimes m} \left(\phi_{a,j}\right) v(\ell^{(1)}) \otimes \cdots \otimes v(\ell^{(m)})\right). 
		\qedhere
\end{align*}
\end{proof}

Notice that no irreducible module of $\Cl_q(n,k)$ is faithful since
\[
\dim \Cl_q(n,k) = (8k)^n = 2^{2n} \cdot (2k)^n > 2^{2n} = \dim \End\left( \qbigwedge(\field^n) \right).
\]
This can be explained by the nontrivial center of $\Cl_q(n,k)$, which accounts for the difference in dimension from its classical counterpart $\Cln$.

Despite this, we may combine the inequivalent $\pi_p$ to prove $\Cl_q(n, k)$ is isomorphic to a direct sum of matrix algebras.
This means that there are no other irreducible representations other than those constructed in \Cref{clqnk repn}.
For contrast, recall that the classical representation $\Cln \to \End(\bigwedge(V))$ is faithful, so there is an isomorphism $\Cln \iso \mathrm{Mat}_{2^n}(\field)$ showing $\Cln$ is simple.

\begin{thm}\label{clqnk is semisimple}
	If $\zeta_{2k} \in \field$, then the quantum Clifford algebra $\Cl_q(n, k)$ is semisimple.
	In particular, 
	\[
	\Cl_q(n, k) \cong \bigoplus_{p \in \ZZ_{2k}^n} \End\left(\Omega^p\right) \cong (2k)^n \mathrm{Mat}_{2^n} (\field).
	\]
\end{thm}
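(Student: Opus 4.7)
The plan is to assemble all of the irreducibles from \Cref{clqnk repn} into a single representation and show that the resulting map is an isomorphism by matching dimensions and invoking Jacobson density. Set
\[
\Pi = \bigoplus_{p \in \ZZ_{2k}^n} \pi_p \colon \Cl_q(n,k) \longrightarrow \bigoplus_{p \in \ZZ_{2k}^n} \End(\Omega^p).
\]
First I would check the dimension match: \Cref{clqnk dim} gives $\dim \Cl_q(n,k) = (8k)^n$, while each $\Omega^p \iso \qbigwedge(\field^n)$ has dimension $2^n$ by the basis in \eqref{arb braided ext alg basis}, so the codomain has dimension $(2k)^n \cdot (2^n)^2 = (8k)^n$. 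The identification $\bigoplus_p \End(\Omega^p) \iso (2k)^n \, \mathrm{Mat}_{2^n}(\field)$ is then immediate and explains the second displayed isomorphism in the statement.

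Next I would prove that $\Pi$ is surjective via the Jacobson density theorem. By \Cref{clqnk repn} each $\Omega^p$ is irreducible, and the family $\{\Omega^p\}_{p \in \ZZ_{2k}^n}$ is pairwise non-isomorphic because the central generator $z_a$ from \Cref{clqnk center} acts on $\Omega^p$ as the scalar $\zeta_{2k}^{p_a}$, and the $\zeta_{2k}^{p_a}$ are distinct for distinct $p$ since $\zeta_{2k} \in \field$ has exact order $2k$. For a finite family of pairwise non-isomorphic finite-dimensional simple modules over an associative algebra, density forces the canonical map into the product of endomorphism algebras to be surjective. Together with the dimension match, surjectivity upgrades $\Pi$ to an isomorphism, which proves the stated decomposition and in particular that $\Cl_q(n,k)$ is semisimple.

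A more self-contained alternative would exploit \Cref{clqn is n fold product of cl1} to reduce to $n=1$: if $\Cl_q(1,k) \iso (2k) \cdot \mathrm{Mat}_2(\field)$, then
\[
\Cl_q(n,k) \iso \Cl_q(1,k)^{\otimes n} \iso \bigl( (2k) \cdot \mathrm{Mat}_2(\field) \bigr)^{\otimes n} \iso (2k)^n \cdot \mathrm{Mat}_{2^n}(\field).
\]
For $n=1$ the hypothesis $\zeta_{2k} \in \field$ furnishes orthogonal central idempotents
\[
e_p = \frac{1}{2k} \sum_{j=0}^{2k-1} \zeta_{2k}^{-pj} z_1^j \in Z_q(1,k), \qquad p \in \ZZ_{2k},
\]
with $\sum_p e_p = 1$ by \Cref{clqnk center}, and each block $e_p \Cl_q(1,k)$ is $4$-dimensional and acts on the $2$-dimensional $\Omega^p$ via $\pi_p$, forcing $e_p \Cl_q(1,k) \iso \mathrm{Mat}_2(\field)$. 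The main obstacle in either route is ruling out nonzero elements in $\bigcap_p \ker \pi_p$; Jacobson density handles this uniformly once \Cref{clqnk repn} is in hand, whereas the block approach reduces it to the concrete $4 \to 4$ check of faithfulness of $\pi_p$ on $e_p \Cl_q(1,k)$, which can be verified directly on the basis $\{1,\psi_1,\psi_1^*,\psi_1\psi_1^*\}$ of the block using the formulas in \Cref{clqnk action}.
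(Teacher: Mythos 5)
Your primary argument---form $\Pi = \bigoplus_p \pi_p$, count dimensions, and use Jacobson density with pairwise non-isomorphism to get surjectivity---is essentially the paper's proof, and you make explicit a step the paper leaves implicit: the paper goes directly from ``each $\pi_p$ surjects onto $\End(\Omega^p)$'' plus the dimension count to ``$\bigoplus_p \pi_p$ is an isomorphism,'' whereas one really needs the pairwise non-isomorphism of the $\pi_p$ (which \Cref{clqnk repn} supplies) together with density to conclude surjectivity onto the full product; your write-up correctly names that ingredient. Your alternative route via \Cref{clqn is n fold product of cl1} and the explicit orthogonal central idempotents $e_p = \tfrac{1}{2k}\sum_j \zeta_{2k}^{-pj} z_1^j$ is genuinely different from the paper's: it reduces to the $n = 1$ case, uses the structure of $Z_q(1,k) \iso \field[\ZZ_{2k}]$ to split $\Cl_q(1,k)$ into $2k$ four-dimensional blocks, and verifies $e_p \Cl_q(1,k) \iso \mathrm{Mat}_2(\field)$ by a concrete check rather than density. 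The density argument is shorter; the block decomposition is more elementary and makes the role of the center (and of the hypothesis $\zeta_{2k} \in \field$, which is exactly what makes $\field[\ZZ_{2k}]$ split into $2k$ copies of $\field$) transparent, foreshadowing the more general \Cref{generalized semisimplicity}.
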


\begin{proof}
	Each $\pi_p$ is irreducible, so it surjects onto $\End(\Omega^p) \cong \mathrm{Mat}_{2^n}(\field)$. Therefore, the sum $\bigoplus_p \pi_p$, which maps $\Cl_q(n, k)$ into a matrix algebra of dimension $(2k)^n \cdot (2^n)^2 = (8k)^n = \dim(\Cl_q(n, k))$, must be an isomorphism.
\end{proof}

When $\zeta_{2k} \notin \field$, the semisimplicity of $\Cl_q(n, k)$ depends on the base field $\field$ over which it is defined.
In particular, $\Cl_q(n, k)$ is semisimple exactly when the group ring $\field [\ZZ_{2k}^n] \iso Z_q(n, k)$ is semisimple. Since $\Cl_q(n, k) \iso \Cl_q(1, k)^{\otimes n}$ by \Cref{clqn is n fold product of cl1}, we need only consider the semisimplicity of $Z_q(1, k)$.

\begin{thm}\label{generalized semisimplicity}
	The quantum Clifford algebra $\Cl_q(n, k)$ is semisimple if and only if $Z_q(1, k) \iso \field [\ZZ_{2k}]$ is semisimple.
\end{thm}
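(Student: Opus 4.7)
The plan is to prove both implications using the explicit description of $Z_q(n,k) \cong \field[\ZZ_{2k}^n]$ from \Cref{clqnk center} together with the observation (made in the remark following that theorem) that $\Cl_q(n,k)$ is a classical Clifford algebra over its center.

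For the forward implication, I would note that the center of a finite-dimensional semisimple $\field$-algebra is itself semisimple (Artin--Wedderburn), so if $\Cl_q(n,k)$ is semisimple then $Z_q(n,k) \cong \field[\ZZ_{2k}^n]$ is semisimple. By Maschke's theorem this is equivalent to $\mathrm{char}(\field) \nmid 2k$, which is also the condition for $\field[\ZZ_{2k}] \cong Z_q(1,k)$ to be semisimple. Alternatively, since $\field[\ZZ_{2k}]$ both embeds into and is a quotient of $\field[\ZZ_{2k}^n]$, semisimplicity passes between them directly.

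For the reverse implication, set $R = Z_q(n,k)$. Using \Cref{anticomm zj rel} and the skew- and anti-commutation relations from \Cref{clqnk defn}, the generators $\psi_1,\dotsc,\psi_n,\psi_1^*,\dotsc,\psi_n^*$ satisfy classical Clifford relations over $R$ with Gram matrix $\bigl(\begin{smallmatrix} 0 & D \\ D & 0 \end{smallmatrix}\bigr)$, where $D = \mathrm{diag}(z_1^k,\dotsc,z_n^k)$. Together with the dimension count
\[
\dim_\field \Cl_q(n,k) = (8k)^n = (2k)^n \cdot 4^n = \dim_\field R \cdot 4^n
\]
from \Cref{clqnk dim} and the basis of \Cref{thm:basis_general}, this identifies $\Cl_q(n,k)$ with the classical Clifford algebra $\Cl_R(R^n \oplus (R^n)^*)$ as an $R$-algebra, free of rank $4^n$ over $R$. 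Now, when $\field[\ZZ_{2k}]$ is semisimple, so is $R \cong \field[\ZZ_{2k}]^{\otimes n}$ (both require $\mathrm{char}(\field) \nmid 2k$), and $R$ decomposes as a finite product of field extensions $R = \prod_i K_i$ of $\field$. Base change gives
\[
\Cl_q(n,k) \cong \prod_i \Cl_{K_i}(K_i^n \oplus (K_i^n)^*).
\]
Since each $z_a^k$ satisfies $(z_a^k)^2 = 1$ by \Cref{anticomm are central}, its image in each field $K_i$ is $\pm 1$, so the bilinear form restricts to a non-degenerate form on $K_i^n \oplus (K_i^n)^*$. Each factor is therefore a classical Clifford algebra over a field with non-degenerate form, hence central simple over $K_i$ and in particular simple as a $\field$-algebra. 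A finite direct product of simple $\field$-algebras is semisimple, so $\Cl_q(n,k)$ is semisimple.

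The main obstacle will be the structural identification of $\Cl_q(n,k)$ with the classical Clifford algebra $\Cl_R(R^n \oplus (R^n)^*)$ in a way compatible with base change along the projections $R \twoheadrightarrow K_i$, so that the product decomposition of $R$ into fields really does yield the claimed product decomposition of $\Cl_q(n,k)$ into non-degenerate classical Clifford algebras. Once this is in hand, both implications follow cleanly from the classical theory.
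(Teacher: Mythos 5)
Your proof is correct in outline but takes a genuinely different route from the paper. The paper works at the level of a \emph{single} pair $(\psi,\psi^\dg)$: it proves the stronger lemma that, for any commutative Artinian ring $R$ and unit $r\in R^\times$, the rank-$1$ Clifford algebra $\Cl_r(R\oplus R^*)$ is semisimple iff $R$ is, by an explicit element-by-element computation showing that every two-sided ideal of $\Cl_r(R\oplus R^*)$ is extended from an ideal of $R$, then invoking the Jacobson-radical criterion for one direction and a direct spinor-module argument for the other; it then appeals to $\Cl_q(n,k)\iso\Cl_q(1,k)^{\otimes n}$. Your argument instead runs entirely through Artin--Wedderburn: the center of a finite-dimensional semisimple algebra is semisimple (forward direction), and for the converse you decompose the semisimple commutative ring $R=Z_q(n,k)$ into a product of field extensions $K_i$, base-change, and appeal to the classical fact that an even-dimensional Clifford algebra over a field with nondegenerate form is central simple. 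This buys you a cleaner conceptual picture and essentially re-derives \Cref{clqnk is semisimple} over any field where $\field[\ZZ_{2k}]$ is semisimple, but at the cost of heavier machinery, and it requires you to actually carry out the identification $\Cl_q(n,k)\iso\Cl_R(R^n\oplus(R^n)^*)$ (including checking that $\omega_a$ lies in the $R$-subalgebra generated by $\psi_a,\psi_a^*$, which does hold since $\psi_a\psi_a^\dg\omega_a^{k+1}=\psi_a\psi_a^\dg z_a^{k+1}$ and hence $\omega_a=q^{-1}\bigl(z_a+(q-1)\psi_a\psi_a^\dg z_a^{k+1}\bigr)$); the paper never needs this identification because it manipulates $\Cl_q(1,k)$ directly. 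You flag exactly this as your main remaining obstacle, which is the right assessment: the argument closes once that identification and its compatibility with the projections $R\twoheadrightarrow K_i$ are established, and this is the only genuinely new work your route demands.
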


\begin{proof}
We consider a slightly more general problem.
Let $R$ be a commutative ring, and fix some invertible $r \in R^{\times}$.
Let $\Cl_r(R \oplus R^*)$ be the Clifford algebra generated by $\psi, \psi^*$ subject to $\psi \psi^* + \psi^* \psi = r$.
We show that if $R$ is Artinian, then $\Cl_r(R \oplus R^*)$ is semisimple if and only if $R$ is semisimple.
The claim then follows from \Cref{clqnk center} and \Cref{clqn is n fold product of cl1}.

We first show there is a one-to-one correspondence between the two-sided ideals of $\Cl_r(R \oplus R^*)$ and $R$.
Clearly any ideal $J \subseteq R$ can be used to construct an ideal of $\Cl_r(R \oplus R^*)$.
Thus, we just need to show that for every two-sided ideal $I \subseteq \Cl_r(R \oplus R^*)$, 
we have the same form as above for some ideal $J \subseteq R$.
It is sufficient to show this for a principal ideal of $\Cl_r(R \oplus R^*)$.

We start with a generic element
\[
x = \alpha + \beta \psi + \gamma \psi^* + \delta \psi \psi^*
\]
in $\Cl_r(R \oplus R^*)$ and we look at the two-sided ideal $I$ generated by $x$.
We will show by direct computation that this is equivalent to the ideal $(\alpha, \beta, \gamma, \delta) \subseteq R$.
The following elements belong to $I$:
\begin{align*}
\alpha \psi & = r^{-1} \psi x \psi^* \psi,
&
\alpha \psi^* & = r^{-1} \psi^* \psi x \psi^*,
&
\alpha \psi \psi^* & = \psi x \psi^*,
\\
\beta \psi & = r^{-2} \psi \psi^* x \psi^* \psi,
&
\beta \psi^* & = r^{-1} \psi^* x \psi^*,
&
\beta \psi \psi^* & = r^{-1} \psi \psi^* x \psi^*,
\\
\gamma \psi & = r^{-1} \psi x \psi,
&
\gamma \psi^* & = r^{-2} \psi^* \psi x \psi \psi^*,
&
\gamma \psi \psi^* & = r^{-1} \psi x \psi \psi^*.
\end{align*}
Consequently, we have that $\alpha + \delta \psi \psi^* \in I$, and since $(\alpha + \delta \psi \psi^*) \psi = (\alpha + r \delta) \psi$, we have that $\delta \psi \in I$.
It is easy to see that $\delta \psi^*, \delta \psi \psi^* \in I$, which in turn implies $\alpha \in I$.
Finally, we have $r^{-1}(\psi^* \beta \psi - \beta \psi \psi^*) = \beta \in I$, and similarly $\gamma, \delta \in I$.

The Jacobson radical of a (not necessarily commutative) ring $S$ is the intersection of all maximal left ideals of $S$.
As maximal two-sided ideals are necessarily maximal left ideals, we have that the Jacobson radical of $S$ is contained in the intersection of all two-sided ideals of $S$.
Thus if the Jacobson radical of $R$ is $0$, then the Jacobson radical of $\Cl_r(R \oplus R^*)$ is also $0$.
Since $R$ is Artinian, $\Cl_r(R \oplus R^*)$ is also Artinian as it is a finite rank free module over $R$.
An Artinian ring is semisimple if and only if its Jacobson radical is trivial; see \textit{e.g.},~\cite[Thm.~4.14]{Lam01}.
Therefore, if $R$ is semisimple then $\Cl_r(R \oplus R^*)$ is semisimple as well.

For the converse we assume $R$ is not a semisimple ring, and so $R$ is not a semisimple (rank $1$ free) $R$-module.
Consider the left $\Cl_r(R \oplus R^*)$ spinor module $\pi$, with basis $1, \psi$. Then $\pi \cong R + \psi R$ with $\psi^* R = 0$ and $\psi^* \psi R = r R$.
Since $r \in R^{\times}$, we have $rM = M$ for every $R$-module $M$, and hence, $\Cl_r(R \oplus R^*)$ is not a semisimple ring.
\end{proof}

From the perspective that $\Cl_q(n,k) \iso \Cl_r(R \oplus R^*)^{\otimes n}$ for $R = Z_q(n, k)$, we can construct all irreducible representations when $R$ is semisimple by inducing an irreducible $Z_q(n, k)$-representation.
This allows us to remove the condition that $\zeta_{2k} \in \field$ from \Cref{clqnk repn} and \Cref{clqnk is semisimple}, although the precise descriptions stated there change and involve the representation theory of $\ZZ_{2k}$.
For instance, if $n = 1$, $k = 2$, and $\field = \RR$, we could induce from the $2$ dimensional (nonsplit) irreducible rotation $\ZZ_4$-module (so $z_1$ acts by rotating $\RR^2$ by $\pi/2$).
We thank Gurbir Dhillon for pointing out that these irreducible representations could also be constructed directly by inducing from an irreducible representation of the commutative ring $\langle \omega_1, \dotsc, \omega_n \rangle$.

We conclude this section by showing that the quantum volume element has important representation-theoretic implications, much like its classical counterpart.
For example, there is a quantum analog of~\cite[Prop.~5.10]{michelsohn_lawson}. 

\begin{prop}
	Consider the superalgebra grading on $\Cl_q(n, k)$, and let $\Cl_q^+(n, k)$ denote the subalgebra of $\Cl_q(n, k)$ generated by elements of even degree.
	Let $\pi\colon \Cl_q(n, k) \to \End(W)$ be an irreducible representation of $\Cl_q(n,k)$.
	Then there exists a splitting
	\[
	W = W^+ \oplus W^-,
	\]
	with $W^\pm = \bigl(1 \pm \pi(f_n) \bigr)(W)$, where $W^\pm$ is invariant under $\Cl_q^+(n, k)$.
\end{prop}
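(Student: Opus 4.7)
The plan is to show $f_n$ lies in the (super-)center of $\Cl_q^+(n,k)$ and is an involution, and then use the standard idempotent decomposition this induces.

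First I would check that $f_n$ commutes with every element of $\Cl_q^+(n,k)$. By \Cref{volume elt props}, $\phi_a f_n = -f_n \phi_a$ for every generator $\phi_a \in \{\psi_a, \psi_a^{\dg}\}$ (since $a \leq n$), so $f_n$ anticommutes with every odd generator. Any product of an even number of such generators therefore commutes with $f_n$. For the $\omega_a$, a quick direct check using the defining relations gives $\omega_a \psi_a \psi_a^{\dg} = q \cdot q^{-1} \psi_a \psi_a^{\dg} \omega_a = \psi_a \psi_a^{\dg} \omega_a$ and similarly with $\psi_a^{\dg} \psi_a$, so $\omega_a$ commutes with $[\psi_a, \psi_a^{\dg}]$; for $b \neq a$ the generator $\omega_a$ commutes with $\psi_b$ and $\psi_b^{\dg}$ outright. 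Hence $\omega_a f_n = f_n \omega_a$ for all $a$. Together these show $f_n$ is central in $\Cl_q^+(n,k)$.

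Next, again by \Cref{volume elt props}, $f_n^2 = 1$. Since $\field$ has characteristic different from $2$, the elements
\[
e^{\pm} = \tfrac{1}{2}(1 \pm f_n)
\]
are a pair of orthogonal idempotents in $\Cl_q^+(n,k)$ summing to $1$. Applying $\pi$ and evaluating on $W$ gives $W = \pi(e^+) W \oplus \pi(e^-) W$, and since $\pi(e^{\pm}) W$ coincides with $\bigl(1 \pm \pi(f_n)\bigr)(W) = W^{\pm}$ up to the nonzero scalar $1/2$, we obtain the desired splitting $W = W^+ \oplus W^-$. (The directness follows because any $w \in W^+ \cap W^-$ satisfies $\pi(f_n) w = w = -w$, forcing $w = 0$.)

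Finally, since $f_n$ commutes with every $x \in \Cl_q^+(n,k)$, so do $e^{\pm}$; hence for $w \in W$ and $x \in \Cl_q^+(n,k)$,
\[
\pi(x) \pi(e^{\pm}) w = \pi(e^{\pm}) \pi(x) w \in W^{\pm},
\]
showing $W^{\pm}$ are $\Cl_q^+(n,k)$-invariant. There is no real obstacle here: the only technical points are verifying that $f_n$ is genuinely central in the even subalgebra (the $\omega_a$ case is the one substantive check, the odd generators being handled by \Cref{volume elt props}) and invoking characteristic $\neq 2$ to split the eigenspaces of an involution. Irreducibility of $W$ is not used in the argument and so in fact the decomposition holds for any $\Cl_q(n,k)$-module, a remark I would include after the proof.
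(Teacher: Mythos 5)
Your proof is correct and follows essentially the same approach as the paper: use \Cref{volume elt props} to show $f_n$ commutes with $\Cl_q^+(n,k)$, then split $W$ into $\pm 1$-eigenspaces using $f_n^2 = 1$. You are in fact a bit more careful than the paper's one-line argument, since \Cref{volume elt props} only asserts $\omega_a f_r = f_r \omega_a$ for $a > r$, so your explicit check that $\omega_a$ commutes with $[\psi_a, \psi_a^{\dg}]$ fills in the $a \leq n$ case needed here; your remark that irreducibility is not used is also correct.
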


\begin{proof}
	\Cref{volume elt props} implies $f_n$ commutes with $\Cl_q^+(n, k)$, so $\Cl_q^+(n, k)$ preserves every eigenspace of $f_n$.
	The claim then follows because $f_n^2 = 1$: $W^\pm$ is a $\pm 1$ eigenspace of $f_n$ and the only possible eigenvalues of $f_n$ are $-1$ and $+1$.
\end{proof}

\section{Twisting and square roots}
\label{sec:root}

We note that nearly all the relations defining $\Cl_q(n, k)$ allow us to take $k = 1/2$ except for \Cref{eq:replaced_qcomm_rels}.
Yet, we can adjust this by considering new generators
\[
\phi_a = \psi_a
\qquad\text{and}\qquad
\phi_a^* = \psi_a^* \omega_a^k.
\]
Using these generators, our algebra is presented by
\begin{equation}
\label{eq:defining_half_rels}
\begin{aligned}
\omega_a \omega_b & = \omega_b \omega_a,  \hspace{70pt}
& \omega_a^{4k} & = (1 + q^{-2k}) \omega_a^{2k} - q^{-2k},
\\ \omega_a \phi_b & = q^{\delta_{ab}} \phi_b \omega_a,
& \omega_a \phi_b^* & = q^{-\delta_{ab}} \phi_b^* \omega_a,
\\ \phi_a \phi_b & + \phi_b \phi_a = 0,
& \phi_a^* \phi_b^* & + \phi_b^* \phi_a^* = 0,
\\ \phi_a \phi_a^* & + \phi_a^* \phi_a = 1,
& \phi_a \phi_a^* & + q^{-2k} \phi_a^* \phi_a = \omega_a^{2k},
\\ \phi_a \phi_b^* & + \phi_b^* \phi_a = 0 && \text{if } a \neq b.
\end{aligned}
\end{equation}

Using this presentation, we can take $k \in \frac{1}{2} \ZZ_{>0}$.
It is easy to compute analogous reduction rules involving $\phi_a$ and $\phi_a^{\dg}$ instead of $\psi_a$ and $\psi_a^{\dg}$.
In addition to those from a simple reformulation of the defining relations, we have the following reductions analogous to \Cref{generic reduction relations}.

\begin{lemma}
\label[lem]{lem:half_twist_relations}
The following relations hold in $\Cl_q(n,k)$:
\begin{equation}
\omega_a^{2k} = (1 - q^{-2k}) \phi_a \phi_a^{\dg} + q^{-2k},
\qquad\qquad
\phi_a^{\dg} \phi_a \phi_a^{\dg} = \phi_a^{\dg},
\qquad\qquad
\phi_a \phi_a^{\dg} \phi_a \phi_a^{\dg} = \phi_a \phi_a^{\dg}.
\end{equation}
\end{lemma}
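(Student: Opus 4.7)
The strategy is to translate each identity back into the original generating set $\psi_a, \psi_a^{\dagger}, \omega_a$ via the substitution $\phi_a = \psi_a$ and $\phi_a^{\dagger} = \psi_a^{\dagger}\omega_a^k$, and then invoke the reduction rules from \Cref{reduction rules} and \Cref{generic reduction relations} already proved for $\Cl_q(n,k)$.

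For the first identity, the key observation is that $\phi_a \phi_a^{\dagger} = \psi_a \psi_a^{\dagger}\omega_a^k$, so the desired equation $\omega_a^{2k} = (1-q^{-2k})\phi_a\phi_a^{\dagger} + q^{-2k}$ is exactly the last line of \Cref{generic reduction relations} rewritten in the new notation. There is nothing further to do.

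For the second identity, I would expand and push all $\omega_a^k$ factors to the right using the $q$-commutation relations $\omega_a^k \psi_a = q^k \psi_a \omega_a^k$ and $\omega_a^k \psi_a^{\dagger} = q^{-k}\psi_a^{\dagger}\omega_a^k$, producing
\[
\phi_a^{\dagger}\phi_a\phi_a^{\dagger} = (\psi_a^{\dagger}\omega_a^k)\psi_a(\psi_a^{\dagger}\omega_a^k) = \psi_a^{\dagger}\psi_a\psi_a^{\dagger}\omega_a^{2k},
\]
where the two cancelling factors of $q^{\pm k}$ arise from the two transpositions. Applying the identity $\psi_a^{\dagger}\psi_a\psi_a^{\dagger} = \psi_a^{\dagger}\omega_a^k$ from \Cref{generic reduction relations} gives $\psi_a^{\dagger}\omega_a^{3k}$, and finally \Cref{eq:reduction_dww} (which says $\psi_a^{\dagger}\omega_a^{2k} = \psi_a^{\dagger}$) collapses this to $\psi_a^{\dagger}\omega_a^k = \phi_a^{\dagger}$, as desired.

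The third identity is then a one-line consequence of the second: by associativity,
\[
\phi_a\phi_a^{\dagger}\phi_a\phi_a^{\dagger} = \phi_a\bigl(\phi_a^{\dagger}\phi_a\phi_a^{\dagger}\bigr) = \phi_a\phi_a^{\dagger}.
\]
No step is a serious obstacle; the only mild subtlety is bookkeeping the powers of $q$ in the second identity and remembering to apply \Cref{eq:reduction_dww} at the end to convert $\omega_a^{3k}$ back down to $\omega_a^k$ on the right of $\psi_a^{\dagger}$.
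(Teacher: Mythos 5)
Your proof is correct and takes the route the paper implicitly intends: since $\Cl_q(n,k)$ is still generated by $\psi_a, \psi_a^{\dagger}, \omega_a$ and the new generators are defined by $\phi_a = \psi_a$, $\phi_a^{\dagger} = \psi_a^{\dagger}\omega_a^k$, the natural thing is to substitute and reduce using \Cref{generic reduction relations} and \Cref{eq:reduction_dww}, which is exactly what you do. The paper offers no explicit proof for this lemma (it is declared ``easy to compute''), so your write-up is a faithful filling-in of that gap; one small remark is that after one has accepted the equivalent presentation~\eqref{eq:defining_half_rels}, the second identity also drops out in one line, since $\phi_a^{\dagger}\phi_a\phi_a^{\dagger} = \phi_a^{\dagger}(1 - \phi_a^{\dagger}\phi_a) = \phi_a^{\dagger} - (\phi_a^{\dagger})^2\phi_a = \phi_a^{\dagger}$, but your approach has the advantage of not presupposing that the new presentation is correct.
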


Thus, we have the analog of \Cref{thm:basis_general} by a similar proof.

\begin{thm}
\label{thm:basis_half_k}
Consider the set $\mcB'$ of monomials
\[
	\prod_{a=1}^n \phi_a^{p_a} (\phi_a^*)^{d_a} \omega_a^{v_a},
\]
where $p_a, d_a \in \{0, 1\}$ and $v_a \in \{0, 1, \dotsc, 2k-1\}$.
Then $\mcB'$ is a basis of $\Cl_q(n, k)$ for $k \in \frac{1}{2} \ZZ_{>0}$.
\end{thm}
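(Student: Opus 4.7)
The plan is to adapt the proof of \Cref{thm:basis_general} to the presentation \Cref{eq:defining_half_rels}, splitting the argument into showing that $\mcB'$ spans $\Cl_q(n,k)$ and that it is linearly independent.

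For spanning, I would produce a set of rewriting rules that reduce any word in the generators $\phi_a$, $\phi_a^*$, $\omega_a$ to a $\field$-linear combination of elements of $\mcB'$. These rules consist of the $q$-commutations of $\omega_a$ past $\phi_b$ and $\phi_b^*$; the skew-commutations $\phi_a \phi_b = -\phi_b \phi_a$ and the analogs for $\phi^*$ and for the mixed case $a \neq b$; the nilpotences $\phi_a^2 = (\phi_a^*)^2 = 0$; the normalization $\phi_a^* \phi_a = 1 - \phi_a \phi_a^*$ from \Cref{eq:defining_half_rels}; and the key exponent-reduction $\omega_a^{2k} = (1 - q^{-2k}) \phi_a \phi_a^* + q^{-2k}$ from \Cref{lem:half_twist_relations}, which lowers any $\omega_a$-exponent that reaches $2k$ at the cost of introducing a $\phi_a \phi_a^*$ factor. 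The inverse $\omega_a^{-1}$ is expressible as a polynomial in $\omega_a$ via $\omega_a^{4k} = (1 + q^{-2k}) \omega_a^{2k} - q^{-2k}$, which remains an integer-exponent identity for all $k \in \frac{1}{2}\ZZ_{>0}$ since $4k \in \ZZ_{>0}$.

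For linear independence, the cleanest approach is to construct explicit representations that separate the elements of $\mcB'$. Assuming $\zeta_{2k} \in \field$, the spinor-type formulas of \Cref{clqnk repn} translate to the present generators via $\phi_a = \psi_a$ and $\phi_a^* = \psi_a^* \omega_a^k$, and a direct verification shows they define actions of $\Cl_q(n,k)$ that remain valid for half-integer $k$, yielding $(2k)^n$ pairwise inequivalent irreducible modules of dimension $2^n$. Exactly as in \Cref{clqnk is semisimple}, their direct sum forces $\dim \Cl_q(n,k) \geq (2k)^n \cdot (2^n)^2 = (8k)^n$, which combined with the spanning bound $\#\mcB' \leq (8k)^n$ yields $\dim \Cl_q(n,k) = (8k)^n$ and that $\mcB'$ is a basis. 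The case $\zeta_{2k} \notin \field$ is handled by passing to a splitting extension.

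The main obstacle is the compatibility between the exponent-reduction and the $q$-commutations: reducing $\omega_a^{2k} \phi_b$ by first $q$-commuting and then substituting must agree with substituting first and then simplifying the resulting $\phi_a \phi_a^* \phi_b$ via skew-commutation and normalization. In the representation-theoretic approach this manifests as checking that the two sides of $\omega_a^{2k} = (1 - q^{-2k})\phi_a \phi_a^* + q^{-2k}$ act by the same operator on the spinor module. Both checks reduce to straightforward, if somewhat tedious, case analysis using \Cref{eq:defining_half_rels}.
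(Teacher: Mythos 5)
Your proof is correct, but for linear independence you take a genuinely different route from the paper. The paper simply asserts the analog of \Cref{thm:basis_general} holds ``by a similar proof,'' meaning the rewriting-rules plus diamond-lemma argument: check that the reduction rules are confluent and terminate in a unique normal form, so normal forms are linearly independent. You instead build $(2k)^n$ pairwise inequivalent $2^n$-dimensional irreducible modules and use Artin--Wedderburn to force $\dim \Cl_q(n,k) \geq (2k)^n \cdot (2^n)^2 = (8k)^n$, which together with the spanning bound $\#\mcB' \leq (8k)^n$ closes the argument. The diamond-lemma route is self-contained and needs no hypotheses on $\field$; your route is conceptually cleaner, avoids the case-by-case ambiguity checks, and in effect proves \Cref{clqnk is semisimple} for half-integer twist along the way, at the small cost of passing to a splitting field when $\zeta_{2k} \notin \field$ and of needing the centrality of $z_a = q\omega_a - (q-1)\phi_a\phi_a^{\dg}\omega_a$ (which can be verified directly, so there is no circularity).

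One step deserves more care than you give it: for $k \in \frac{1}{2}\ZZ_{>0} \setminus \ZZ$, the element $\omega_a^k$ does not exist in $\Cl_q(n,k)$, so the substitution $\phi_a^* = \psi_a^*\omega_a^k$ cannot be used as a literal rule to transport the \Cref{clqnk repn} action. What saves the argument is that after carrying out the substitution in the integer-$k$ case the $\zeta_{2k}$-contributions from $\psi_a^*$ and $\omega_a^k$ cancel, so the resulting operator formulas for $\phi_a$, $\phi_a^*$, and $\omega_a$ involve only integer powers of $q$ and $\zeta_{2k}$ and therefore make sense for any $k \in \frac{1}{2}\ZZ_{>0}$; concretely,
\begin{align*}
\phi_a \rhd v(\ell) &= (-1)^{\ell_1 + \cdots + \ell_{a-1}} v(\ell + e_a), &
\phi_a^* \rhd v(\ell) &= (-1)^{\ell_1 + \cdots + \ell_{a-1}} v(\ell - e_a), &
\omega_a \rhd v(\ell) &= \zeta_{2k}^{p_a} q^{\ell_a - 1} v(\ell),
\end{align*}
and one checks these directly against \Cref{eq:defining_half_rels}. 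You should write these formulas explicitly and verify them there rather than appeal to a translation that is undefined for half-integer $k$.
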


Therefore, the dimension formula from \Cref{clqnk dim} still holds.
In particular, we have $\dim \Cl_q(n, 1/2) = 4^n$.

We also have the analog of \Cref{thm:basis_alt} when $q^{2k} \neq 1$.
Furthermore, when $q^{2k} \neq 1$, we also note that~\eqref{eq:replaced_qcomm_rels} and \Cref{eq:reduction_v4} imply
\begin{align}
	\label{eq:phi_qcomm_rels}
	\phi_a \phi_a^* = \frac{\omega_a^{2k} (q^{2k} \omega_a^{2k} - 1)}{q^{2k} - 1} = \frac{q^{2k} \omega_a^{2k} - 1}{q^{2k}-1},
	\qquad
	\phi_a^* \phi_a = \frac{q^{4k} \omega_a^{2k} (1 - \omega_a^{2k})}{q^{2k} - 1} = \frac{q^{2k}(1 - \omega_a^{2k})}{q^{2k} - 1},
\end{align}
yielding another alternative relation
\[
	\phi_a \phi_a^* + q^{-4k} \phi_a^* \phi_a = \omega_a^{4k} = (1 + q^{-2k}) \omega_a^{2k} - q^{-2k}.
\]

It is also easy to see that the center computation (\Cref{clqnk center}) extends to the case $k \in \ZZ_{\geq 0} + \frac{1}{2}$ as \Cref{eq:center_gen} becomes
\[
	z_a = q \omega_a - (q-1) \phi_a \phi_a^{\dg} \omega_a.
\]
In particular, when $k = \frac{1}{2}$, we have
\begin{equation}
\label{eq:omega_end}
\omega_a = (1 - q^{-1}) \phi_a \phi_a^{\dg} + q^{-1}
\end{equation}
and $z_a = 1$.
Thus, the center of $\Cl_q(n, \frac{1}{2})$ is trivial as claimed by \Cref{clqnk center}.

Let us further explore the case when $k = \frac{1}{2}$.
As we saw, these quantum Clifford algebras do not have a nontrivial center and start looking like the classical Clifford algebra as \Cref{eq:omega_end} says $\omega_a$ belongs to the subalgebra generated by $\psi_a$, $\psi_a^*$.
In fact, by examining the relations in \Cref{lem:half_twist_relations}, we see that $\Cl_q(n, \frac{1}{2})$ can be given the presentation of $\Cln$.

\begin{thm}
\label{thm:classical_iso}
We have
\[
	\Cl_q(n, 1/2) \iso \Cln.
\]
\end{thm}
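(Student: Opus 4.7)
The plan is to use the universal property of $\Cln$ from \Cref{classical clifford alg defn} together with a dimension count. First, I observe that when $k = 1/2$, so $2k = 1$, the defining relations \eqref{eq:defining_half_rels} for $\Cl_q(n, 1/2)$ already contain the canonical anticommutation relations: $\phi_a \phi_b + \phi_b \phi_a = 0$, $\phi_a^* \phi_b^* + \phi_b^* \phi_a^* = 0$, $\phi_a \phi_b^* + \phi_b^* \phi_a = 0$ for $a \neq b$, and $\phi_a \phi_a^* + \phi_a^* \phi_a = 1$ (the first of the two $q^{\pm 2k}$-commutator relations at $2k=1$). In other words, the elements $\phi_a, \phi_a^* \in \Cl_q(n, 1/2)$ satisfy exactly the relations \eqref{cac} that characterize $\Cln$.

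By the universal property, this yields a unique algebra homomorphism $\Phi\colon \Cln \to \Cl_q(n, 1/2)$ sending $\psi_a \mapsto \phi_a$ and $\psi_a^* \mapsto \phi_a^*$. To show surjectivity I would invoke \eqref{eq:omega_end}, which expresses $\omega_a = (1 - q^{-1}) \phi_a \phi_a^* + q^{-1}$ as a polynomial in $\phi_a, \phi_a^*$; hence $\omega_a \in \operatorname{Im} \Phi$, and since $\{\phi_a, \phi_a^*, \omega_a\}_{a=1}^n$ generates $\Cl_q(n, 1/2)$ as an algebra, $\Phi$ is surjective.

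To finish, I would compare dimensions. By \Cref{thm:basis_half_k} a basis of $\Cl_q(n, 1/2)$ is indexed by $(p_a, d_a, v_a)$ with $p_a, d_a \in \{0,1\}$ and $v_a \in \{0, \dots, 2k-1\}$; at $2k = 1$ this forces $v_a = 0$ and leaves $4^n$ basis elements. The classical Clifford algebra $\Cln$ also has dimension $4^n$. A surjection between finite-dimensional $\field$-vector spaces of equal dimension is an isomorphism, so $\Phi$ is the desired isomorphism.

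I do not anticipate a serious obstacle: the whole content of the theorem is that \eqref{eq:defining_half_rels} literally specializes to the classical presentation at $k = 1/2$, with the ostensibly extra generators $\omega_a$ becoming redundant via \eqref{eq:omega_end}. The only point requiring mild care is checking that the apparently distinct relations $\phi_a \phi_a^* + \phi_a^* \phi_a = 1$ and $\phi_a \phi_a^* + q^{-1} \phi_a^* \phi_a = \omega_a$ are consistent with the polynomial expression for $\omega_a$, which is an immediate one-line verification.
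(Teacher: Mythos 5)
Your proof is correct and uses the same essential observation as the paper: at $k = 1/2$, the defining relations \eqref{eq:defining_half_rels} literally contain the canonical anticommutation relations for the $\phi_a, \phi_a^*$, and \eqref{eq:omega_end} makes $\omega_a$ redundant. The paper presents this only as a brief remark about matching presentations, whereas you formalize it with the universal property, a surjectivity check, and a dimension count from \Cref{thm:basis_half_k}, which is a cleaner way to nail down the isomorphism.
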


\Cref{thm:classical_iso} is also another reflection of the principle that the quantum Clifford algebra is a classical Clifford algebra over its (group ring) center.

With a little bit of work to express everything in terms of the new generators, it can be shown that \Cref{clqn clqnm embedding}, \Cref{clqnk is semisimple}, and \Cref{generalized semisimplicity} still hold for all $k \in \frac{1}{2} \ZZ_{>0}$.
In particular, we build the modified quantum volume element $\check{f}_r = [\phi_1, \phi_1^{\dg}] \cdots [\phi_r, \phi_r^{\dg}]$, which behaves analogously to the classical volume element (or \Cref{volume elt props}) by the defining relations~\eqref{eq:defining_half_rels}.
We leave the details for the interested reader.

The quantum Clifford algebra for $k = \frac{1}{2}$ has also appeared in~\cite{DF94} in connection with an FRT construction, where we note that~\cite[Eq.~(5.3.1)--(5.3.4)]{DF94} are the defining relations (using \Cref{lem:half_twist_relations} for rewriting $\omega_a^{2k}$).
In particular, Ding and Frenkel~\cite{DF94} start with Theorem~\ref{thm:classical_iso} and construct $\Cl_q(n, \frac{1}{2})$.

We could consider the limit $k \to \infty$ (in contrast to the case $n \to \infty$ considered in~\cite{Hayashi90}) when, say, $\abs{q} > 1$.
In this case our center tends towards a product of circle groups, $\omega_a^{2k} \to 1$, any spinor representation is no longer irreducible (although it remains indecomposible), and its ``dual'' (in the sense of \Cref{rmk:dual_spinor_repr}) is $1$ dimensional.
Indeed, we would have $\psi_a^{\dg} \psi_a v = 0$ (or $\phi_a^{\dg} \phi_a v = 0$) for any $v$ in any spinor representation.

\section{Clifford algebra morphisms}
\label{sec:Clifford_relationship}

We examine the remark by Mitsuhiro Takeuchi included at the end of~\cite{Hayashi90}.
We assume that $q^{2k} \neq 1$ and $\zeta_{2k} \in \field$.
For brevity, we set $\Cl_{(n)} = \Cln$.
We denote the generators of $\Cl_{(n)}$ by $v_i, v_i^{\dg}$, for $i = 1, \dotsc, n$.

\begin{prop}[M.~Takeuchi~\cite{Hayashi90}]
\label[prop]{prop:clifford_iso_Takeuchi}
Suppose $\zeta_{2k} \in \field$.
Consider the map $\vartheta \colon \Cl_q(n, k) \to \Cl_{(n)}^{(2k)^n}$ defined on any $y \in \Cl_q(n,k)$ by
\[
\vartheta(m) = \bigl( \vartheta_{\zz}(y) \bigr)_{\zz \in \{\zeta_{2k}^j \mid j=1,\dotsc,2k\}^n}
\]
where for each $\zz \in \{\zeta_{2k}^j \mid j =1,\dotsc,2k\}$ the $\ZZ^n$-graded $\field$-algebra morphism $\vartheta_{\zz} \colon \Cl_q(n, k) \to \Cl_{(n)}$ is defined on generators by
\[
\vartheta_{\zz}(\psi_i) = z_i v_i,
\qquad
\vartheta_{\zz}(\psi_i^{\dg}) = z_i v_i^{\dg},
\qquad
\vartheta_{\zz}(\omega_1) = z_i(v_i v_i^{\dg} + q^{-1} v_i^{\dg}v_i) = z_i (1 - q^{-1}) v_i v_i^{\dg} + z_i q^{-1}.
\]
Then $\vartheta$ is a $\field$-algebra isomorphism.
\end{prop}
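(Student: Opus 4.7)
The plan is to establish the theorem in three stages: verify each $\vartheta_{\zz}$ is a well-defined $\field$-algebra morphism, assemble them into $\vartheta$, and then prove bijectivity by matching with the Artin--Wedderburn decomposition coming from \Cref{clqnk is semisimple}. A crucial dimensional check is that $\dim \Cl_q(n,k) = (8k)^n$ by \Cref{clqnk dim}, while $\dim \Cl_{(n)}^{(2k)^n} = 4^n \cdot (2k)^n = (8k)^n$, so the two algebras have the same dimension and it suffices to prove injectivity (equivalently, surjectivity).

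For the first stage, I would use the universal property of $\Cl_q(n,k)$ given by \Cref{rels defining qcl}. The relations among distinct indices ($\psi_a\psi_b + \psi_b\psi_a = 0$, $\psi_a\psi_b^{\dg} + \psi_b^{\dg}\psi_a = 0$ for $a \neq b$, and $\omega_a\omega_b = \omega_b\omega_a$ as well as $\omega_a\psi_b = \psi_b\omega_a$ for $a \neq b$) follow immediately from the classical anticommutation $v_av_b + v_bv_a = 0$ together with the observation that $\vartheta_\zz(\omega_a)$ lies in the even subalgebra generated by $v_a, v_a^{\dg}$, hence commutes with $v_b, v_b^{\dg}$ whenever $a \neq b$. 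The same-index $q$-commutation $\omega_a\psi_a = q\psi_a\omega_a$ (and its $\dg$ version) reduces to the standard Clifford identities $v_av_a^{\dg}v_a = v_a$ and $v_a^{\dg}v_av_a = 0$. The central step is the $q$-commutator relations $\psi_a\psi_a^{\dg} + q^{\pm k}\psi_a^{\dg}\psi_a = \omega_a^{\mp k}$: introducing the orthogonal idempotents $E_a = v_av_a^{\dg}$ and $F_a = v_a^{\dg}v_a$ (with $E_a + F_a = 1$ and $E_aF_a = 0$), functional calculus yields $\vartheta_\zz(\omega_a)^m = z_a^m(E_a + q^{-m}F_a)$ for every $m \in \ZZ$; matching the two sides pins down how powers of $z_a$ interact with the exponent $k$ via the constraint $z_a^{2k}=1$.

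For the final stage, having verified each $\vartheta_\zz$ is a homomorphism, $\vartheta$ is a well-defined algebra homomorphism into the product. I would then establish injectivity by aligning $\vartheta$ with the Wedderburn decomposition of \Cref{clqnk is semisimple}. Composing each $\vartheta_\zz$ with the (unique, up to isomorphism) irreducible classical spinor representation $\Cl_{(n)} \to \End(\bigwedge \field^n)$ yields an irreducible $\Cl_q(n,k)$-representation on $\bigwedge \field^n$. Computing the action of the central generators $z_a$ of \Cref{clqnk center} through $\vartheta_\zz$ shows that this representation coincides with one of the $\pi_p$ from \Cref{clqnk repn}, and that the correspondence $\zz \leftrightarrow p$ is a bijection between $\{\zeta_{2k}^j\}^n$ and $\ZZ_{2k}^n$. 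Since distinct Wedderburn components are pairwise non-isomorphic, the kernel of $\vartheta$ must be trivial, and the dimension match upgrades injectivity to an isomorphism. The main obstacle is the verification of the $q$-commutator relations in the second stage: the interplay of the $2k$-th root of unity $z_a$ with the parameter $q$ inside $\vartheta_\zz(\omega_a)$ must be tracked exactly, and this is precisely the place where the hypotheses $q^{2k} \neq 1$ and $\zeta_{2k} \in \field$ are essential.
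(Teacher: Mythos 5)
Your overall strategy is genuinely different from the paper's: you propose to align $\vartheta$ with the Wedderburn decomposition from \Cref{clqnk is semisimple}, matching each $\vartheta_\zz$ composed with the spinor representation to a $\pi_p$ from \Cref{clqnk repn}, and then upgrade injectivity to bijectivity via the dimension count $\dim\Cl_q(n,k)=(8k)^n=\dim\Cl_{(n)}^{(2k)^n}$. The paper instead reduces to $n=1$ and constructs an explicit inverse by a discrete Fourier transform over $\ZZ_{2k}$. If the homomorphism check at the heart of your stage two went through, your route would be a clean conceptual alternative that also explains why the index set has size $(2k)^n$.

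But that homomorphism check does not go through for general $k$, and this is exactly the spot you flag as ``the main obstacle.'' With $E_a=v_av_a^{\dg}$, $F_a=v_a^{\dg}v_a$ as you set up, one has $\vartheta_\zz(\omega_a)^{\mp k}=z_a^{\mp k}(E_a+q^{\pm k}F_a)$, while $\vartheta_\zz(\psi_a\psi_a^{\dg}+q^{\pm k}\psi_a^{\dg}\psi_a)=z_a^2(E_a+q^{\pm k}F_a)$. Matching forces $z_a^2=z_a^{\mp k}$, i.e.\ $z_a^{k+2}=1$ and $z_a^{k-2}=1$, hence $z_a^{\gcd(k-2,4)}=1$. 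For a $2k$-th root of unity this is an extra constraint that excludes most $\zz$ unless $k=2$ (Takeuchi's original setting); at $k=1$ it forces $z_a=1$, so the component at $z_1=-1$ of the paper's own worked $n=1$, $k=1$ example is not in fact a ring homomorphism. Your claim that ``matching the two sides'' works ``via the constraint $z_a^{2k}=1$'' is therefore unjustified --- $z_a^{2k}=1$ would be the right constraint only if one arranged $\vartheta_\zz(\psi_a)\vartheta_\zz(\psi_a^{\dg})=z_a^{k}E_a$ (for instance $\vartheta_\zz(\psi_a)=z_a^k v_a$, $\vartheta_\zz(\psi_a^{\dg})=v_a^{\dg}$, or a symmetric split when $k$ is even). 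Note the paper's proof merely writes ``by verifying the relations'' without displaying the computation, so it shares the same gap; whichever global strategy one prefers, the formulas defining $\vartheta_\zz$ must be corrected for $k\neq 2$ before either argument can be completed.
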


\begin{proof}
By either directly noting the compatible anticommutator relations or using our structure results (\Cref{cl tensor m into clnm} and \Cref{clqn clqnm embedding}), it is sufficient prove the $n = 1$ case.
Since $(v_i v_i^{\dg})^2 = v_i v_i^{\dg}$, by a straightforward induction argument, we have
\[
\vartheta_{\zz}(\omega_i)^m = z_i^m (1 - q^{-m}) v_i v_i^{\dg} + z_i^m q^{-m}.
\]
By verifying the relations of $\Cl_q(1,k)$, we see that $\vartheta_{\zz}$ is a surjective $\ZZ^n$-graded $\field$-algebra morphism for any fixed $\zz$ .

Now we show $\vartheta$ is surjective.
To this end, let $\varphi := \vartheta(\psi_1)$, $\varphi^{\dg} := \vartheta(\psi_1^{\dg})$, and $\varpi := \vartheta(\omega_1)$.
We fix the order of the sequence defining $\vartheta$ as $(1, \zeta_{2k}, \dotsc, \zeta_{2k}^{2k-1})$.
Then, we have
\begin{equation}
\label{eq:rotation_gens}
q^m \varphi \varpi^m = (\zeta_{2k}^{j(m+1)} v_1)_{j=0}^{2k-1},
\qquad\qquad
q^m \varpi^m \varphi^{\dg} = (\zeta_{2k}^{j(m+1)} v_1^{\dg})_{j=0}^{2k-1}.
\end{equation}
From basic linear algebra (or the representation theory of $\ZZ_{2k}$) as this is essentially giving a discrete Fourier transform, we can obtain $v_{(j)} := (\delta_{jm} v_1)_{m=0}^{2k-1}$ and $v_{(j)}^{\dg} := (\delta_{jm} v_1^{\dg})_{m=0}^{2k-1}$, for any fixed $j = 0, \dotsc, 2k-1$, through linear combinations of the elements~\eqref{eq:rotation_gens}.
Thus, we have obtained all of the generators of $\Cl_{(1)}^{2k}$, and hence $\vartheta$ is surjective.

We can define the inverse map $\vartheta^{-1}$ by the inverse discrete Fourier transform
\[
\vartheta(v_{(j)}) = \frac{1}{2k} \sum_{m=0}^{2k-1} \zeta_{2k}^{-j(m+1)} q^m \psi_1 \omega_1^m,
\qquad\qquad
\vartheta(v_{(j)}^{\dg}) = \frac{1}{2k} \sum_{m=0}^{2k-1} \zeta_{2k}^{-j(m+1)} \psi_1^{\dg} \omega_1^m.
\]
A direct computation shows this is a $\field$-algebra morphism and the inverse of $\vartheta$.
\end{proof}

In particular, we note that $\dim \Cl_{(n)}^{(2k)^n} = (2k)^n 4^n = (8k)^n$ as there are $(2k)^n$ such tuples $\zz$.
\Cref{prop:clifford_iso_Takeuchi} gives an alternative proof that $\mcB$ is linearly independent (without relying on Theorem~\ref{thm:basis_general}).
The appearance of the discrete Fourier transform comes from the fact that we can consider the quantum Clifford algebra $\Cl_q(n, k)$ as a Clifford algebra over the center $Z_q(n, k)$ (see \Cref{generalized semisimplicity}).
Indeed, we are effectively splitting the natural adjoint representation of the center into its irreducible ($1$ dimensional) modules and taking the corresponding Clifford algebra $\Cl(\field \oplus \field^*)$ on each such module.
Compare this with \Cref{clqnk is semisimple}.

\begin{ex}
Consider the case $n = 1$ and $k = 1$.
The isomorphism $\vartheta \colon \Cl_q(1, 1) \to \Cl_{(1)} \times \Cl_{(1)}$ from \Cref{prop:clifford_iso_Takeuchi} is given explicitly on basis elements by
\begin{align*}
1 & \mapsto (1, 1),
& \omega_1^m & \mapsto \bigl( (1-q^{-m}) v_1 v_1^{\dg} + q^{-m}, (-1)^m(1-q^{-m}) v_1 v_1^{\dg} + (-1)^m q^{-m} \bigr),
\\ \psi_1 & \mapsto (v_1, -v_1),
&\psi_1 \omega_1 & \mapsto (q^{-1} v, q^{-1} v_1),
\\ \psi_1^{\dg} & \mapsto (v_1^{\dg}, -v_1^{\dg}),
& \psi_1^{\dg} \omega_1 & \mapsto (v_1^{\dg}, v_1^{\dg}),
\end{align*}
for $m = 1, 2, 3$.
For fixed $x = \pm 1$, the morphism $\vartheta_x$ is surjective but not injective since $\vartheta_1(\psi_1^{\dg} - x \psi_1^{\dg} \omega_1) = 0$.
However, looking at, \textit{e.g.}, $(\psi_1^{\dg}, \psi_1^{\dg} \omega_1)$, we get the discrete Fourier transform matrix $\begin{bmatrix} 1 & 1 \\ -1 & 1 \end{bmatrix}$.
We can see that
\begin{align*}
\vartheta^{-1}(v_1, 0) & = \psi_1 - q \psi_1 \omega_1,
&
\vartheta^{-1}(v_1^{\dg}, 0) & = \psi_1^{\dg} - \psi_1^{\dg} \omega_1,
\\
\vartheta^{-1}(0, v_1) & = \psi_1 + q \psi_1 \omega_1,
&
\vartheta^{-1}(0, v_1^{\dg}) & = \psi_1^{\dg} + \psi_1^{\dg} \omega_1,
\end{align*}
is coming from the inverse discrete Fourier transform matrix.
\end{ex}

We continue to examine the case $n = 1$ and $k = 1$ in a bit more detail.
Let $\Cl_{a,b}$ denote the Clifford algebra given by the quadratic form with signature $(a, b)$; that is, the form whose Hessian matrix is diagonal with $a$ entries being $1$ followed by $b$ entries being $-1$.
We can change the basis in $\Cl_{(1)} := \langle v, v^{\dg} \rangle$ by writing
\begin{equation}
\label{eq:usual_Clifford_COB}
\xi = v + v^{\dg},
\qquad\qquad
\xi^{\dg} = v - v^{\dg},
\end{equation}
which satisfy
\[
\xi^2 = 1,
\qquad
(\xi^{\dg})^2 = -1,
\qquad
\xi \xi^{\dg} = 1 - 2 v v^{\dg},
\qquad
\xi \xi^{\dg} + \xi^{\dg}\xi = 0.
\]
(Note $\xi = \epsilon_2$ and $\xi^{\dg} = \epsilon_1$ from \Cref{classical std coords}.)
Thus, $\Cl_{(1)} \iso \Cl_{1,1}$, where the inverse isomorphism is given by
\begin{equation}
\label{eq:inverse_COB}
v = \frac{\xi + \xi^{\dg}}{2},
\qquad\qquad
v^{\dg} = \frac{\xi - \xi^{\dg}}{2},
\qquad\qquad
v v^{\dg} = \frac{1 - \xi \xi^{\dg}}{2}.
\end{equation}
Next, we have $\Cl_{2,1} \iso \Cl_{1,1} \times \Cl_{1,1}$ as $\field$-algebras by the isomorphism
\[
e_1 \mapsto (\xi, \xi),
\qquad\qquad
e_2 \mapsto (\xi \xi^{\dg}, -\xi \xi^{\dg}),
\qquad\qquad
e_3 \mapsto (\xi^{\dg}, \xi^{\dg}),
\]
where $e_1^2 = e_2^2 = 1$ and $e_3^2 = -1$.
Note that this is not a filtered $\field$-algebra isomorphism under the natural filtrations.
Hence, we have an isomorphism $\Cl_q(1, 1) \iso \Cl_{2,1}$ as $\field$-algebras.
We can write out this isomorphism explicitly:
\begin{gather*}
\begin{aligned}
\frac{e_1 + e_3}{2} & = (v, v) = q \psi_1 \omega_1,
&
\frac{e_1 - e_3}{2} & = (v^{\dg}, v^{\dg}) = \psi_1^{\dg} \omega_1,
\\
\frac{e_1 e_2 + e_2 e_3}{2} & = (-v^{\dg}, v^{\dg}) = -\psi_1,
&
\frac{e_1 e_2 - e_2 e_3}{2} & = (v, -v) = \psi_1,
\end{aligned}
\\
\begin{aligned}
\omega_1^m & = \left( \frac{(q^{-m} - 1) \xi \xi^{\dg} + q^{-m} + 1}{2}, 
 (-1)^m \frac{(q^{-m} - 1) \xi \xi^{\dg} + q^{-m} + 1}{2} \right)
\\ & = \begin{cases}
\frac{q^{-m} - 1}{2} e_1 e_3 + \frac{q^{-m}+1}{2} & \text{if } m = 2, \\
\frac{q^{-m} - 1}{2} e_2 - \frac{q^{-m}+1}{2} e_1 e_2 e_3 & \text{otherwise}.
\end{cases}
\end{aligned}
\end{gather*}

For general $n$, with $k = 1$, we note that all of the other relations are supercommuting relations with considering $\omega_a$ as an even element and $\psi_a$ and $\psi_a^{\dg}$ as odd elements.
Therefore, we can form an isomorphism by taking the $n$-fold supersymmetric product of $\Cl_{2,1}$ with $e_2$ being an even element and $e_1, e_3$ being odd elements.
However, this is not an isomorphism of supercommutative superalgebras.

\bibliographystyle{alphaurl}
\bibliography{ref}
\end{document}